\DeclarePairedDelimiter\ceil{\lceil}{\rceil}
\DeclarePairedDelimiter\floor{\lfloor}{\rfloor}
\author{Tuomas Orponen}
\title{On the discretised $ABC$ sum-product problem}
\address{Department of Mathematics and Statistics\\ University of Jyv\"askyl\"a,
P.O. Box 35 (MaD)\\
FI-40014 University of Jyv\"askyl\"a\\
Finland}
\email{tuomas.t.orponen@jyu.fi}
\date{\today}
\subjclass[2010]{11B30 (primary) 28A80 (secondary)}
\keywords{Discretised sum-product problem, Projections, Hausdorff dimension}
\thanks{T.O. is supported by the Academy of Finland via the projects \emph{Quantitative rectifiability in Euclidean and non-Euclidean spaces} and \emph{Incidences on Fractals}, grant Nos. 309365, 314172, 321896.}
\newcommand{\R}{\mathbb{R}}
\newcommand{\N}{\mathbb{N}}
\newcommand{\Z}{\mathbb{Z}}
\newcommand{\spt}{\operatorname{spt}}
\newcommand{\Hd}{\dim_{\mathrm{H}}}
\newcommand{\spa}{\operatorname{span}}
\newcommand{\dist}{\operatorname{dist}}
\def\Barint_#1{\mathchoice
          {\mathop{\vrule width 6pt height 3 pt depth -2.5pt
                  \kern -8pt \intop}\nolimits_{#1}}%
          {\mathop{\vrule width 5pt height 3 pt depth -2.6pt
                  \kern -6pt \intop}\nolimits_{#1}}%
          {\mathop{\vrule width 5pt height 3 pt depth -2.6pt
                  \kern -6pt \intop}\nolimits_{#1}}%
          {\mathop{\vrule width 5pt height 3 pt depth -2.6pt
                  \kern -6pt \intop}\nolimits_{#1}}}
\numberwithin{equation}{section}
\theoremstyle{plain}
\newtheorem{thm}[equation]{Theorem}
\newtheorem*{"thm"}{"Theorem"}
\newtheorem{conjecture}[equation]{Conjecture}
\newtheorem{lemma}[equation]{Lemma}
\newtheorem{cor}[equation]{Corollary}
\newtheorem{proposition}[equation]{Proposition}
\newtheorem*{counter}{Counter assumption}
\theoremstyle{definition}
\newtheorem{definition}[equation]{Definition}
\theoremstyle{remark}
\newtheorem{remark}[equation]{Remark}
\newcommand{\nref}[1]{(\hyperref[#1]{#1})}
\DeclareMathSymbol{\intop}  {\mathop}{mathx}{"B3}
\begin{document} 

\begin{abstract} Let $0 < \beta \leq \alpha < 1$ and $\kappa > 0$. I prove that there exists $\eta > 0$ such that the following holds for every pair of Borel sets $A,B \subset \R$ with $\Hd A = \alpha$ and $\Hd B = \beta$:
\begin{displaymath} \Hd \{c \in \R : \Hd (A + cB) \leq \alpha + \eta\} \leq \tfrac{\alpha - \beta}{1 - \beta} + \kappa. \end{displaymath}
This extends a result of Bourgain from 2010, which contained the case $\alpha = \beta$. The paper also contains a $\delta$-discretised, and somewhat stronger, version of the estimate above, and new information on the size of long sums of the form $a_{1}B + \ldots + a_{n}B$. \end{abstract}

\maketitle

\tableofcontents

\section{Introduction}

Let $A,B,C \subset \R$ be large but finite sets. Is it true that there exists some $c \in C$ such that $|A + cB| \gg |A|$? Here $|\cdot|$ refers to cardinality. Not necessarily: consider for example
\begin{equation}\label{form87} A_{n} = \left\{\tfrac{1}{n^{1/2}},\tfrac{2}{n^{1/2}},\ldots,1 \right\} \quad \text{and} \quad B_{n} = \left\{\tfrac{1}{n^{1/4}},\tfrac{2}{n^{1/4}},\ldots,1\right\} = C_{n}. \end{equation}
It is not hard to check that for every $\epsilon > 0$, there exists $n \in \N$ such that $|A_{n} + B_{n}C_{n}| \leq n^{\epsilon}|A|$, so in particular $|A_{n} + cB_{n}| \leq n^{\epsilon}|A|$ for all $c \in C_{n}$. The problem can be fixed by adding one assumption: $|B||C| \gg |A|$. Then, a positive answer to the question follows easily from the Szemer\'edi-Trotter theorem \cite{MR729791} applied to the planar set $A \times B$. The requirement $|B||C| \gg |A|$ is also necessary, as one can see by variants of \eqref{form87}.

The $ABC$ sum-product problem, stated above, also makes sense in contexts where the Szemer\'edi-Trotter bound is not available, for example if $A,B,C \subset \mathbb{Z}_{p}$, and $p \in \N$ is prime. Again, it turns out that the lower bound $|B||C| \gg |A|$ yields the existence of $c \in C$ with $|A + cB| \gg |A|$. One way to show this is to adapt elementary techniques of Garaev \cite{MR2344270}, Glibichuk and Konyagin \cite{MR2359478}, and Bourgain \cite{MR2481734}. The details can be found in \cite{OV18}. Another way is to apply directly an incidence bound in finite fields due to Stevens and de Zeeuw \cite{MR3742451}. The theorem of Stevens and de Zeeuw gives a stronger lower bound for $|A + cB|$ than the elementary approach (see \cite[Proposition 1.3]{OV18} for the details), but ultimately relies on the polynomial method.  

The purpose of this paper is to consider the \emph{$\delta$-discretised $ABC$ sum-product problem} in $\R$, and lower bounds for $\Hd (A + cB)$, the Hausdorff dimension of $A + cB$. The $\delta$-discretised problem is otherwise the same as the question we started with, but instead of counting the cardinality $|A + cB|$, we seek lower bounds for the \emph{$\delta$-covering number} $|A + cB|_{\delta}$ for some small scale $\delta > 0$. We will also assume that the sets $A,B,C$ are $\delta$-separated, and have cardinalities $|A| = \delta^{-\alpha}$, $|B| = \delta^{-\beta}$, and $|C| = \delta^{-\gamma}$. In this variant of the problem, hypotheses on $|B||C|$ need to be coupled with additional non-concentration conditions to hope for positive results. The following theorem of Bourgain \cite{Bourgain10} from 2010 (extending his own work \cite{Bo1} from 2003) treats the case $A = B$:
\begin{thm}[Bourgain]\label{bourgain} Given $\alpha \in (0,1)$ and $\gamma,\kappa > 0$, there exist $\epsilon_{0},\epsilon > 0$ such that that the following holds for $\delta > 0$ sufficiently small.

Let $\nu$ be a probability measure on $[0,1]$ satisfying $\nu(B(x,r)) \leq r^{\gamma}$ for all $x \in \R$ and $0 < r \leq \delta^{\epsilon_{0}}$. Let additionally $A \subset [0,1]$ be a $\delta$-separated set with $|A| \geq \delta^{-\alpha}$, which also satisfies the non-concentration condition $|A \cap B(x,r)| \leq r^{\kappa}|A|$ for $x \in \R$ and $\delta \leq r \leq \delta^{\epsilon_{0}}$. 

Then, there exists a point $c \in \spt (\nu)$ such that
\begin{equation}\label{form85} |A + cA|_{\delta} \geq \delta^{-\alpha - \epsilon}. \end{equation}
\end{thm}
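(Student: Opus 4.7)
The plan is to argue by contradiction, with Bourgain's 2003 discretised sum-product theorem \cite{Bo1} as the ultimate source of contradiction: under exactly the $\kappa$-non-concentration hypothesis on $A$, one has $|A+A|_{\delta}+|A\cdot A|_{\delta} \geq \delta^{-\alpha-\epsilon_{\mathrm{SP}}}$ for some $\epsilon_{\mathrm{SP}}=\epsilon_{\mathrm{SP}}(\alpha,\kappa)>0$. Accordingly I would fix $\epsilon \ll \epsilon_{\mathrm{SP}}$ in \eqref{form85} and assume, toward a contradiction, that $|A+cA|_{\delta}\leq \delta^{-\alpha-\epsilon}$ for every $c\in\spt(\nu)$.

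The first step is a Plünnecke-type bound at scale $\delta$. Since $A$ has "small doubling with dilate $c$" for every $c\in\spt(\nu)$, a discretised Ruzsa triangle inequality should yield $|c_{1}A+c_{2}A|_{\delta}\leq \delta^{-\alpha-O(\epsilon)}$ for a $\nu\times\nu$-positive proportion of pairs $(c_{1},c_{2})\in\spt(\nu)^{2}$. Rescaling $c_{1}A+c_{2}A=c_{2}(A+(c_{1}/c_{2})A)$ converts this into $|A+tA|_{\delta}\leq\delta^{-\alpha-O(\epsilon)}$ for $t$ in the support of the pushforward of $\nu\times\nu$ under the quotient map. The Frostman regularity of $\nu$ at scales $r\leq \delta^{\epsilon_{0}}$ ensures that this pushforward still satisfies a Frostman-type bound with some positive exponent on an interval $[c_{\ast},2c_{\ast}]$ bounded away from $0$.

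The main step is then to amplify this into simultaneous smallness of $|A+A|_{\delta}$ and $|A\cdot A|_{\delta}$. By iterating the first step, passing each time to a subset $A'\subset A$ of $\delta$-covering number $\gtrsim \delta^{O(\epsilon)}|A|_{\delta}$ via a discretised Balog--Szemer\'edi--Gowers argument to re-regularise, I would build a set $T$ of "good dilates" of positive dimension on which $|A'+tA'|_{\delta}$ remains small. Choosing $t_{1},t_{2}\in T$ with $t_{1}/t_{2}$ close to $1$ then controls $|A'+A'|_{\delta}$ up to a harmless rescaling error, while comparing $|A'+tA'|_{\delta}$ at two well-separated values $t\in T$ bounds $|A'\cdot A'|_{\delta}$ via a slicing/covering argument that treats $\{tA'\}_{t\in T}$ as a family of dilates. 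Since $A'$ still satisfies $\tfrac{\kappa}{2}$-non-concentration at scales $\geq\delta^{1-\epsilon}$, Bourgain's sum-product theorem applied to $A'$ would deliver the required contradiction.

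The main obstacle is this amplification step: one must propagate smallness through iterated Plünnecke and Balog--Szemer\'edi--Gowers steps with losses polynomial in $\delta^{\epsilon}$, and, crucially, convert one-parameter smallness at dilates $t\in T$ into a bound on the genuinely two-parameter product set $A'\cdot A'$. This is where the Frostman hypothesis on $\nu$ is used quantitatively, rather than merely through $\gamma>0$: the pushforward measures on the good dilates must retain enough dimension after several iterations for the slicing argument to have room to work.
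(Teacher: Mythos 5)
First, a point of orientation: the paper does not prove Theorem \ref{bourgain} from scratch. It is quoted from Bourgain \cite{Bourgain10}, and is then recovered as the special case $\alpha=\beta$ of Theorem \ref{mainTechnical}, whose proof runs through Shmerkin's inverse theorem (Theorem \ref{shmerkin}), a multiscale analysis of the branching numbers of (uniformised subsets of) $A$ and $B$, and the $L^{2}$/entropy projection estimate of Lemma \ref{lemma3} applied on the scale blocks where $A$ has small branching; the Frostman hypothesis on $\nu$ enters only through that last lemma, block by block. Your first step (Ruzsa triangle inequality plus rescaling, producing $|A+tA|_{\delta}\leq\delta^{-\alpha-O(\epsilon)}$ for $t$ in a set carrying a Frostman measure of positive exponent) is sound. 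The proposal breaks down at the ``main step''.

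The gap is the claim that smallness of $|A'+tA'|_{\delta}$ for $t$ ranging over a positive-dimensional set $T$ can be amplified into an \emph{upper} bound on $|A'\cdot A'|_{\delta}$. Each hypothesis $|A'+tA'|_{\delta}\leq\delta^{-\alpha-O(\epsilon)}$ concerns one linear projection $\pi_{t}(x,y)=x+ty$ of $A'\times A'$, whereas $A'\cdot A'=\bigcup_{a\in A'}aA'$ requires control of the dilates $aA'$ for $a\in A'$ itself; the dilates you control lie in (quotients of) $\spt(\nu)$, which has no relation whatsoever to $A'$. Comparing $|A'+t_{1}A'|_{\delta}$ and $|A'+t_{2}A'|_{\delta}$ at two (or any finite number of) values of $t$ only yields $|A'|_{\delta}^{2}\lesssim\prod_{i}|A'+t_{i}A'|_{\delta}$, which is consistent with your counter-assumption and produces nothing; and the Freiman-type additive structure extracted by Pl\"unnecke and Balog--Szemer\'edi--Gowers forces $|A'\cdot A'|_{\delta}$ to be \emph{large} (that is the content of the sum-product theorem you invoke), which is the wrong direction for your intended contradiction. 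In short, the conversion of one-parameter additive smallness into multiplicative smallness is not a technical obstacle to be propagated through iterations --- it is the entire content of the theorem, and no mechanism for it is supplied. If you want a genuinely short proof by contradiction, the Garaev-style argument of Guth--Katz--Zahl \cite{MR4283564} is the route that works: it stays entirely on the additive side, deriving a contradiction from the structure of sets with $|A+cA|$ small for $\nu$-many $c$, rather than attempting to pass to the product set.
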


\begin{remark}\label{subsetRemark} Bourgain's theorem admits the following stronger version, which, to the best of my knowledge, was first stated and proved by He \cite[Theorem 1]{MR4148151} (see also \cite[(7.43), p. 221]{Bourgain10} for a slightly weaker result): under the assumptions of Theorem \ref{bourgain}, there exists a point $c \in \spt(\nu)$ such that $|\pi_{c}(G)|_{\delta} \geq \delta^{-\alpha - \epsilon}$ for all subsets $G \subset A \times A$ of cardinality $|G| \geq \delta^{\epsilon}|A|^{2}$. Here $\pi_{c}(x,y) = x + cy$. This version is useful for proving lower bounds for $\Hd (A + cA)$. Bourgain also proved such lower bounds in \cite[Theorem 4]{Bourgain10} without explicitly mentioning the stronger version of Theorem \ref{bourgain}: while his proof is correct, it requires some care from the reader to extract all the details. Applying the stronger version directly is simpler, see \cite[Theorem 2]{MR4148151}. \end{remark}

To see the connection between Theorem \ref{bourgain} and the $ABC$ problem, let $C \subset [0,1]$ be a $\delta$-separated set satisfying $|C \cap B(x,r)| \leq r^{\gamma}|C|$ for all $x \in \R$ and $\delta \leq r \leq \delta^{\epsilon_{0}}$. Then the uniformly distributed probability measure $\nu$ on the $\delta$-neighbourhood of $C$ satisfies $\nu(B(x,r)) \lesssim r^{\gamma}$, and it follows from \eqref{form85} that there exists $c \in C$ with $|A + cA|_{\delta} \geq \delta^{-\alpha - \epsilon}$.

Theorem \ref{bourgain} formally only treats the case $A = B$, but an inspection of its proof (or, more directly, an application of \cite[Theorem 3]{Bourgain10}), reveals that the result remains valid for two different $\delta$-separated sets $A,B \subset [0,1]$, provided that $|A| = |B|$, or at least $|B| \approx |A|$. The precise meaning of "$\approx$" is defined via the various constants appearing in \cite[Theorem 3]{Bourgain10}. To the best of my knowledge, Theorem \ref{bourgain} does not cover the case where $|A| = \delta^{-\alpha}$ and $|B| = \delta^{-\beta}$ with $\beta < \alpha$ (the case $\beta > \alpha$ is not relevant here: then $|A + cB|_{\delta} \gtrsim |B| \gg |A|$ for any $c \in \R$ with $|c| \sim 1$). 

The following conjecture would correspond to the assumption $|B||C| \gg |A|$ which suffices in the discrete variants (on $\R$ and $\Z_{p}$) of the $ABC$ sum-product problem:
\begin{conjecture}\label{mainConjecture} Let $\alpha,\beta,\gamma \in (0,1)$ with $\beta \leq \alpha$ and $\gamma > \alpha - \beta$. Assume that $A,B,C \subset [0,1]$ are $\delta$-separated sets with cardinalities $|A| \leq \delta^{-\alpha}$, $|B| = \delta^{-\beta}$, and $|C| = \delta^{-\gamma}$. Assume moreover that $|B \cap B(x,r)| \lesssim r^{\beta}|B|$ and $|C \cap B(x,r)| \lesssim r^{\gamma}|C|$ for all $x \in \R$ and $r > 0$. Then, there exists $\epsilon = \epsilon(\alpha,\beta,\gamma) > 0$ and a point $c \in C$ such that $|A + cB|_{\delta} \gtrsim_{\alpha,\beta,\gamma} \delta^{- \epsilon}|A|$. \end{conjecture}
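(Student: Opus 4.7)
The natural attack is to mimic, at the $\delta$-discretised level, the proof of the discrete $ABC$ problem via Szemer\'edi--Trotter. I would argue by contradiction: assume that $|A + cB|_\delta \leq K|A|$ for every $c \in C$, where $K = \delta^{-\epsilon}$ and $\epsilon > 0$ is to be chosen small in terms of $\gamma - (\alpha - \beta)$. View the product $P = A \times B \subset [0,1]^2$ as a $\delta$-separated point set with $|P| = |A||B|$, and for each $c \in C$ and each $\delta$-bucket $s$ in the image of $\pi_c(x,y) = x + cy$, form the $\delta$-tube $T_{c,s} = \{(x,y) : |x + cy - s| \leq \delta\}$. The hypothesis guarantees that each $c$ contributes at most $K|A|$ occupied buckets, so the family $\mathcal T$ of all such tubes has $|\mathcal T| \leq K|A||C|$, and every point of $P$ lies on exactly one tube per $c$, giving the total incidence count $I(P, \mathcal T) = |P||C| = |A||B||C|$.

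If a $\delta$-discretised Szemer\'edi--Trotter bound of the form
\[ I(P, \mathcal T) \lesssim \delta^{-O(\epsilon)}(|P||\mathcal T|)^{2/3} + |P| + |\mathcal T| \]
were available, exploiting the Frostman non-concentration of $B$ (built into the product structure of $P$) and of the direction set $C$, then substituting $|P| = |A||B|$ and $|\mathcal T| \leq K|A||C|$ and rearranging would produce $|B||C| \lesssim K^{O(1)} \delta^{-O(\epsilon)} |A|$, i.e.\ $\beta + \gamma \leq \alpha + O(\epsilon)$ in exponent form. For $\epsilon$ sufficiently small this contradicts $\gamma > \alpha - \beta$, closing the argument. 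An essentially equivalent route is to work at the level of $L^2$ energies: Cauchy--Schwarz on the multiplicity $r_c(s) = \#\{(a,b) : |a+cb - s| \leq \delta\}$ yields $\sum_s r_c(s)^2 \gtrsim K^{-1}|A||B|^2$, and summing over $c \in C$ gives $\gtrsim K^{-1}|A||B|^2|C|$ collision quintuples $(a, a', b, b', c)$ with $|a - a' - c(b'-b)| \leq 2\delta$; the same ST-type bound on this energy would then yield the same contradiction.

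The main obstacle is the displayed Szemer\'edi--Trotter bound itself: such a sharp $\delta$-discretised incidence theorem is not available in the literature and appears genuinely difficult. The set $A$ is allowed to be arbitrary, for example a short arithmetic progression, so $P$ may concentrate in a long thin rectangle; the discretised incidence inputs that \emph{are} available, for instance those derivable from Bourgain's projection theorem (Theorem~\ref{bourgain}), require non-concentration of \emph{both} coordinates. Extracting a Frostman-regular subset of $A$ by pigeonholing loses a factor $(1-\beta)$ in the exponent bookkeeping, producing exactly the threshold $\gamma > (\alpha - \beta)/(1 - \beta)$ announced in the abstract. Reaching the sharp Kaufman-type threshold $\gamma > \alpha - \beta$ therefore seems to require either a new discretised incidence theorem tolerating one-sided concentration in the point set, or a sophisticated induction-on-scales argument exploiting the product structure of $A \times B$.
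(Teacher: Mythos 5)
The statement you are addressing is Conjecture \ref{mainConjecture}, which the paper explicitly leaves open: there is no proof of it in the paper to compare against, and the main result (Theorem \ref{mainTechnical}) only establishes the weaker threshold $\gamma > (\alpha - \beta)/(1 - \beta)$. Your proposal is accordingly not a proof, and to your credit you say so: the entire argument is conditional on the displayed $\delta$-discretised Szemer\'edi--Trotter bound, which is precisely the missing ingredient. That bound is not merely ``unavailable in the literature'' --- without strong non-concentration hypotheses it is false (point sets concentrated in a $\delta \times \delta^{1/2}$ rectangle, or well-spaced ``train-track'' configurations, violate it), and even under Frostman-type hypotheses a sharp discretised incidence theorem of this strength is essentially equivalent to the hardest open problems in the discretised sum-product/Furstenberg circle of ideas. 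So the gap is the single displayed inequality; everything after it (the bookkeeping giving $\beta + \gamma \leq \alpha + O(\epsilon)$, and the equivalent energy formulation) is routine and correct.

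Two smaller points of comparison with what the paper actually does. First, the paper's partial result does not go through incidences at all: it runs Bourgain's and Shmerkin's multi-scale machinery (uniformisation, branching numbers, the inverse theorem for $|A+B| \leq \delta^{-\epsilon}|A|$), combined with a Kaufman-type $L^{2}$ projection estimate applied scale-by-scale on blocks where $A$ has low branching and $B$ has small but non-trivial branching. Second, your diagnosis that the $(1-\beta)$ loss comes from ``extracting a Frostman-regular subset of $A$ by pigeonholing'' is not quite the paper's mechanism: in the proof sketch of Section \ref{s:outline}, the factor $1-\beta$ arises because the scales at which $B$ has trivial branching occupy a $(1-\beta)$-fraction of all scales, which caps the guaranteed low-branching exponent of $A$ at $(\alpha-\beta)/(1-\beta)$ on those scales. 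The two heuristics are related in spirit, but the paper's bottleneck is a statement about the multi-scale structure forced by $|B| = \delta^{-\beta}$, not about regularising $A$. As a referee's verdict: the proposal correctly identifies why the conjecture is open, but it does not prove the statement, and no proof should be expected here.
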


The lower bound for $\gamma$ in Conjecture \ref{mainConjecture} is necessary, but the non-concentration assumptions on $B$ and $C$ are quite likely not sharp. The main result of this paper is the following partial result, where the lower bound $\gamma > \alpha - \beta$ is upgraded to $\gamma > (\alpha - \beta)/(1 - \beta)$:
\begin{thm}\label{main} Let $0 < \beta \leq \alpha < 1$ and $\kappa > 0$. Then, for every $\gamma \in ((\alpha - \beta)/(1 - \beta),1]$, there exist $\epsilon_{0},\epsilon,\delta_{0} \in (0,\tfrac{1}{2}]$, depending only on $\alpha,\beta,\gamma,\kappa$, such that the following holds. Let $\delta \in 2^{-\N}$ with $\delta \in (0,\delta_{0}]$, and let $A,B \subset (\delta \cdot \Z) \cap [0,1]$ satisfy the following hypotheses:
\begin{enumerate}
\item[(A)] \label{A} $|A| \leq \delta^{-\alpha}$.
\item[(B)] \label{B} $|B| \geq \delta^{-\beta}$, and $B$ satisfies the following Frostman condition: 
\begin{displaymath} |B \cap B(x,r)| \leq r^{\kappa}|B|, \qquad \delta \leq r \leq \delta^{\epsilon_{0}}. \end{displaymath} 
\end{enumerate}
Further, let $\nu$ be a Borel probability measure with $\spt (\nu) \subset [\tfrac{1}{2},1]$, and satisfying the Frostman condition $\nu(B(x,r)) \leq r^{\gamma}$ for $x \in \R$ and $0 < r \leq \delta^{\epsilon_{0}}$. Then, there exists a point $c \in \spt(\nu)$ such that the following holds: if $G \subset A \times B$ is any subset with $|G| \geq \delta^{\epsilon}|A||B|$, then 
\begin{displaymath} |\pi_{c}(G)|_{\delta} \geq \delta^{-\epsilon}|A|, \quad \text{where} \quad \pi_{c}(x,y) = x + cy. \end{displaymath}
\end{thm}
Theorem \ref{main} with $\alpha = \beta$ recovers Theorem \ref{bourgain}, and the stronger version in Remark \ref{subsetRemark}. In fact, Theorem \ref{main} is formally stronger than Theorem \ref{bourgain}, since Theorem \ref{main} does not impose any non-concentration conditions on $A$. This is useful in proving Corollary \ref{longSumCor} below. Theorem \ref{main} easily yields the following corollary for Hausdorff dimension:
\begin{cor}\label{hausdorffCor} Let $0 < \beta \leq \alpha < 1$ and $\kappa > 0$. Then, there exists $\eta = \eta(\alpha,\beta,\kappa) > 0$ such that if $A,B \subset \R$ are Borel sets with $\Hd A = \alpha$, $\Hd B = \beta$, then 
\begin{displaymath} \Hd \{c \in \R : \Hd (A + cB) \leq \alpha + \eta\} \leq \tfrac{\alpha - \beta}{1 - \beta} + \kappa. \end{displaymath}
\end{cor}
The case $\alpha = \beta$ is already contained in Bourgain's paper \cite{Bourgain10}. The reduction from Theorem \ref{main} to Theorem \ref{hausdorffCor} is a standard pigeonholing argument, and goes the same way as the proof of \cite[Theorem 2]{MR4148151}. For completeness, I give the details in Section \ref{appA}. A "continuous" version of Conjecture \ref{mainConjecture} would imply that the number $(\alpha - \beta)/(1 - \beta)$ in Corollary \ref{hausdorffCor} can be replaced by $\alpha - \beta$.

The lower bound on $|\pi_{c}(G)|_{\delta}$ in Theorem \ref{main} is indispensable for deducing Corollary \ref{hausdorffCor}, but makes Theorem \ref{main} difficult to prove with a direct assault. Instead, Theorem \ref{main} will be formally reduced to the following simpler version, which only treats $G = A \times B$:

\begin{thm}\label{mainTechnical} Let $0 < \beta \leq \alpha < 1$ and $\kappa > 0$. Then, for every $\gamma \in ((\alpha - \beta)/(1 - \beta),1]$, there exist $\epsilon,\epsilon_{0},\delta_{0} \in (0,\tfrac{1}{2}]$, depending only on $\alpha,\beta,\gamma,\kappa$, such that the following holds. Let $\delta \in 2^{-\N}$ with $\delta \in (0,\delta_{0}]$, and let $A,B \subset (\delta \cdot \Z) \cap [0,1]$ satisfy the following hypotheses:
\begin{enumerate}
\item[(A)] \label{A} $|A| \leq \delta^{-\alpha}$.
\item[(B)] \label{B} $|B| \geq \delta^{-\beta}$, and $B$ satisfies the following Frostman condition: 
\begin{displaymath} |B \cap B(x,r)| \leq r^{\kappa}|B|, \qquad \delta \leq r \leq \delta^{\epsilon_{0}}. \end{displaymath} 
\end{enumerate}
Further, let $\nu$ be a Borel probability measure with $\spt (\nu) \subset [0,1]$, satisfying the Frostman condition $\nu(B(x,r)) \leq r^{\gamma}$ for $x \in \R$ and $\delta \leq r \leq \delta^{\epsilon_{0}}$. Then, there exists $c \in \spt (\nu)$ such that 
\begin{displaymath} |A + cB|_{\delta} \geq \delta^{-\epsilon}|A|. \end{displaymath}
\end{thm}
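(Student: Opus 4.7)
The plan is to reduce to Bourgain's Theorem \ref{bourgain} by a slicing argument, then combine the slice-level bounds into the desired lower bound on $|A + cB|_\delta$.

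First, via standard dyadic pigeonholing at scales $\delta \leq r \leq 1$, I pass to a subset of $A$ of size $\delta^{O(\epsilon)}|A|$ enjoying a mild non-concentration condition: $|A \cap B(x,r)| \lesssim \delta^{-O(\epsilon)} r^{\alpha}|A|$ for $\delta \leq r \leq 1$. The problem is preserved up to a polynomial loss, so I may assume $A$ is quasi-$\alpha$-Frostman.

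Next, I partition $[0,1]$ into disjoint intervals $\{I_j\}$ of length $\ell$ and set $A_j := A \cap I_j$. The Frostman structure on $A$ lets me choose $\ell$ so that $|A_j| \approx |B|$ for a positive fraction of $j$. To each such slice I apply Bourgain's Theorem \ref{bourgain} in the unequal-cardinality form indicated in the introduction, obtaining, for each such $j$, a $\nu$-positive set of $c$'s with $|A_j + cB|_\delta \geq \delta^{-\beta - \epsilon_1}$. By Fubini (using Bourgain's result in its stronger ``$\nu$-most $c$'' form), a single $c^* \in \spt(\nu)$ is simultaneously good for a positive fraction of slices.

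The crux is then the combining step: conclude $|A + c^*B|_\delta \geq \delta^{-\alpha-\epsilon}$ from the per-slice lower bounds $|A_j + c^*B|_\delta \geq \delta^{-\beta-\epsilon_1}$. Since $A + c^*B = \bigcup_j (A_j + c^*B)$, this reduces to controlling the overlaps $(A_j + c^*B) \cap (A_{j'} + c^*B)$, which correspond to near-collisions $a - a' \approx c^*(b' - b)$ with $a \in A_j$, $a' \in A_{j'}$, $b, b' \in B$. The $\gamma$-Frostman condition on $\nu$ provides control: averaging over $c^* \sim \nu$, few pairs $(b,b')$ can land near any fixed $(a-a')/(b'-b)$, and the $\kappa$-Frostman condition on $B$ controls the distribution of the differences $b' - b$. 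The arithmetic of these conditions produces exactly the threshold $\gamma > (\alpha-\beta)/(1-\beta)$: equivalently $(1-\beta)(1-\gamma) < 1-\alpha$, the ``codimension product'' of $B$ and $\nu$ is strictly smaller than that of $A$.

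The main obstacle is the combining step. A naive $L^2$/Cauchy--Schwarz bound on collisions only yields $|A + c^*B|_\delta \geq \delta^{-\gamma}$, which falls short of $\delta^{-\alpha-\epsilon}$ unless $\gamma > \alpha$. Reaching the threshold $\gamma > (\alpha-\beta)/(1-\beta)$ will likely require reframing the problem as a planar projection: view $A \times B \subset \R^2$ and $\pi_c(x,y) = x + cy$ as a family of projections parametrised by $c \in \spt(\nu)$, and invoke a discretised projection or Furstenberg-type bound —  in the spirit of Orponen--Shmerkin — tailored to the product structure and the asymmetric Frostman hypotheses. A further subtlety is that Bourgain's theorem is stated with directions in $[1/2,1]$ and sets in $[0,1]$, so applying it at the slice level requires either invoking it verbatim on $A_j \subset [0,1]$ (exploiting that it only depends on cardinalities and Frostman conditions, not on diameters) or a careful rescaling that preserves the Frostman conditions on both $\nu$ and $B$ at the new scale $\delta/\ell$.
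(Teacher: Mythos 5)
Your proposal does not close the argument, and the two places where it is incomplete are both essential. First, the per-slice step: if $A_j = A \cap I_j$ with $|I_j| = \ell$ and $|A_j| \approx |B| = \delta^{-\beta}$, then (under your quasi-Frostman reduction) $\ell$ is a fixed positive power of $\delta$, so $A_j$ is entirely contained in a ball of radius $\ell \ll \delta^{\epsilon_0}$ and violently fails the non-concentration hypothesis that Theorem \ref{bourgain} imposes on both sets at all scales $\delta \leq r \leq \delta^{\epsilon_0}$. This is not a cosmetic issue about diameters: Bourgain's theorem is false without that hypothesis (take $A$ an interval of $\delta$-separated points). Rescaling $I_j$ to $[0,1]$ does not help, because the companion set $B$ then dilates to $[0,1/\ell]$ and the pair $(\ell^{-1}A_j, \ell^{-1}B)$ at scale $\delta/\ell$ is a different problem; restricting $B$ to intervals of length $\ell$ as well changes the cardinality relation between the two sets and forces a genuinely multi-scale analysis, which is where the real work lies.

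Second, the combining step, which you yourself flag as the crux, cannot be rescued in the form you set it up. Since $c \in \spt(\nu) \subset [\tfrac12,1]$ and $B$ is spread over $[0,1]$ by its Frostman condition, each set $A_j + cB$ has diameter $\sim 1 \gg \ell$, so the sets $A_j + cB$ overlap almost completely as subsets of $[0,2]$; there is no approximate disjointness to exploit, and the theorem imposes \emph{no} non-concentration on $A$ (deliberately, as this is needed for Corollary \ref{longSumCor}), so nothing controls the relative positions of the slices. Your collision-count heuristic correctly identifies the numerology $(1-\beta)(1-\gamma) < 1-\alpha$, but as you note it only delivers $|A+c^*B|_\delta \gtrsim \delta^{-\gamma}$, and the appeal to an unspecified "Furstenberg-type bound" is precisely the missing theorem. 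The paper avoids spatial slicing altogether: after reducing to the case where $B$ has small doubling (Pl\"unnecke--Ruzsa) and assuming $|A+B| \leq \delta^{-\epsilon}|A|$ for contradiction, it applies Shmerkin's inverse theorem to give $A$ and $B$ uniform tree structures in which $B$ either does not branch or $A$ branches fully; a branching-count then shows that on a positive proportion of scale blocks $B$ has small but nonzero branching while $A$ has branching exponent below $\Gamma < \gamma$, an elementary $L^{2}$-energy estimate (Lemma \ref{lemma3}) beats the trivial bound on exactly those blocks, and the gains are summed across scales by conditional entropy (Lemma \ref{entropyLemma}) rather than by taking unions of translates. The decomposition is multiplicative in scale, not additive in space, and that is what makes the combining step work.
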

Theorem \ref{mainTechnical} is the heart of the paper, but as far as I know, it is also news that Theorem \ref{main} can be literally reduced to Theorem \ref{mainTechnical}. This takes some work, but is mostly a matter of "standard techniques" in additive combinatorics. Since these details can be carried out without reference to the rest of the paper, they are postponed to Section \ref{appB}.

\begin{remark}\label{rem7} As written above, Theorem \ref{main} is deduced from Theorem \ref{mainTechnical} in Section \ref{appB}. A variant of this problem is the following. Assume that we want to prove Theorem \ref{main} with a fixed non-concentration exponent "$\kappa$". Can we deduce it from the version of Theorem \ref{mainTechnical} with the same $\kappa$? The answer is "almost": it turns out that in order to deduce Theorem \ref{main} for a fixed non-concentration exponent $\kappa > 0$, we only need to invoke Theorem \ref{main} with non-concentration exponent $\bar{\kappa} \in (0,\kappa)$ arbitrarily close to $\kappa$: however, the values of the constants $\epsilon,\delta_{0}$ produced by the argument will tend to $0$ as $\bar{\kappa} \nearrow \kappa$. The reductions in Section \ref{appB} will be written in such a way that this claim becomes apparent -- and the matter will be further refreshed in Remarks \ref{rem4}, \ref{rem5}, and \ref{rem6}. \end{remark}

\subsection{Related work} A relevant piece of recent literature is the paper of Guth, Katz, and Zahl \cite{MR4283564}, where the authors extend an argument (due to Garaev \cite{MR2344270}) from finite fields to give a new, relatively simple, proof of Bourgain's Theorem \ref{bourgain}. Given that the $\Z_{p}$ analogue of Conjecture \ref{mainConjecture} is known \cite{OV18}, it may be plausible that Conjecture \ref{mainConjecture} can be solved by extending the $\Z_{p}$ argument in the fashion of Guth, Katz, and Zahl. I was not able to carry this out, and here is why. The proof in \cite{OV18} is chiefly based on the following lemma: if $A,B \subset \mathbb{Z}_{p}$ are sets with $|A| = p^{\alpha}$ and $|B| = p^{\beta}$, then for every $\eta > 0$ there exists an integer $n = n(\alpha,\beta,\eta) \in \N$, and choices $a_{1},\ldots,a_{n} \in \pm A$ such that 
\begin{equation}\label{form93} |a_{1}B + \ldots + a_{n}B| \gtrsim p^{-\eta}\min\{|A||B|,p\}. \end{equation}
I was not able to extend the finite field techniques in \cite{OV18} to (directly) prove a $\delta$-discretised analogue of \eqref{form93}. However, once Theorem \ref{mainTechnical} is known, it can be applied to make partial progress towards a $\delta$-discretised analogue of \eqref{form93} (a sharper result would follow from Conjecture \ref{mainConjecture} in the same way):
\begin{cor}\label{longSumCor} Let $\beta,\gamma \in (0,1)$ and $0 < \eta < \gamma(1 - \beta)$. Then, there exists $\epsilon_{0},\delta_{0} > 0$ and $n \in \N$, depending on $\beta,\gamma,\eta$,  such that the following holds for all $\delta \in (0,\delta_{0}]$. Let $B,C \subset (\delta \cdot \Z) \cap [0,1]$ be non-empty sets satisfying 
\begin{equation}\label{form115} |B \cap B(x,r)| \leq r^{\beta}|B| \quad \text{and} \quad |C \cap B(x,r)| \leq r^{\gamma}|C| \end{equation}
for $x \in \R$ and $\delta \leq r \leq \delta^{\epsilon_{0}}$. Then, there exist points $c_{1},\ldots,c_{n} \in C$ such that
\begin{displaymath} |c_{1}B + \ldots + c_{n}B|_{\delta} \geq \delta^{-\gamma - \beta(1 - \gamma) + \eta} = \delta^{-\beta - \gamma(1 - \beta) + \eta}. \end{displaymath}
\end{cor}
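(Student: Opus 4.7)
The approach is to iterate Theorem \ref{mainTechnical}, starting from $A_1 = c_1 B$ and successively adjoining $c_k B$ to boost the $\delta$-covering number. Set the target exponent $\alpha^{*} := \beta + \gamma(1-\beta) - \eta$; by the hypothesis $\eta < \gamma(1-\beta)$, we have $\beta < \alpha^{*} < 1$, and
\[ \gamma - \frac{\alpha^{*} - \beta}{1-\beta} = \frac{\eta}{1-\beta} > 0. \]
Thus Theorem \ref{mainTechnical} applies to sets $A$ with $|A| \leq \delta^{-\alpha^{*}}$ and to $B$ (from the corollary). Because the gap above is strictly positive, the theorem continues to apply after perturbing its parameters slightly (replacing the size-exponent $\beta \rightsquigarrow \beta - \epsilon'$, the Frostman exponent on $\nu$ by $\gamma \rightsquigarrow \gamma - \eta'$, and taking $\kappa = \beta/2$), which provides the slack needed to absorb constant-factor losses from rescaling. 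This yields constants $\epsilon, \epsilon_{0}, \delta_{0} > 0$ depending only on $\beta, \gamma, \eta$. Fix $n := \lceil \alpha^{*}/\epsilon \rceil + 1$.

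Construct $c_{1}, \ldots, c_{n}$ inductively. Choose an arbitrary $c_{1} \in C \cap [\tfrac{1}{2}, 1]$; this set is non-empty (in fact carries a positive fraction of $C$) since the Frostman condition on $C$ gives $|C \cap [0, \tfrac{1}{2}]| \leq 2^{-\gamma}|C|$. Set $A_{1} := c_{1} B$, so that $|A_{1}|_{\delta} \gtrsim \delta^{-\beta}$. Inductively, given $A_{k} := c_{1} B + \ldots + c_{k} B$ with $|A_{k}|_{\delta} \leq \delta^{-\alpha^{*}}$, apply Theorem \ref{mainTechnical} to the triple $(A_{k}, B, \nu)$, where $\nu$ is the uniform probability measure on $C \cap [\tfrac{1}{2}, 1]$. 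To land the sum $A_k \subset [0,k]$ inside $[0,1]$, first dilate the triple by $1/(n+1)$ and pass to a scale $\tilde{\delta} \in 2^{-\N}$ comparable to $\delta/(n+1)$. The theorem then returns some $c_{k+1} \in C \cap [\tfrac{1}{2}, 1]$ (after undoing the dilation) such that
\[ |A_{k} + c_{k+1} B|_{\delta} \geq \tilde{\delta}^{-\epsilon}|A_{k}|_{\delta} \geq \delta^{-\epsilon}|A_{k}|_{\delta}. \]
Set $A_{k+1} := A_{k} + c_{k+1} B$. Iterating, within $n$ steps $|A_{k}|_{\delta}$ exceeds $\delta^{-\alpha^{*}}$; fill in any remaining $c_{j} \in C$ arbitrarily to conclude
\[ |c_{1} B + \ldots + c_{n} B|_{\delta} \geq \delta^{-\alpha^{*}} = \delta^{-\beta - \gamma(1-\beta) + \eta}. \]

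The main obstacle is bookkeeping, not substance. After dilation by $1/(n+1)$, the Frostman exponent of $B/(n+1)$ at scale $r$ becomes effectively $|B/(n+1) \cap B(x,r)| \leq ((n+1)r)^{\beta}|B/(n+1)|$, which is bounded by $r^{\beta/2}$ exactly when $r \leq (n+1)^{-2}$; this holds for $r \leq \tilde{\delta}^{\epsilon_{0}}$ once $\delta_{0}$ is taken small in terms of the fixed constants $n$ and $\epsilon_{0}$. Analogous checks handle $\nu$ (the loss $(1 - 2^{-\gamma})^{-1}$ from restricting to $[\tfrac{1}{2},1]$ is absorbed by the slack $\gamma \rightsquigarrow \gamma - \eta'$) and the lower bound $|B| \geq \tilde{\delta}^{-(\beta - \epsilon')}$. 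Beyond this calibration, the corollary is a direct iteration of Theorem \ref{mainTechnical}.
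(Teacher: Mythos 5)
Your approach --- iterate Theorem \ref{mainTechnical} with $\alpha^{*} = \beta + \gamma(1-\beta) - \eta$, using that each application either certifies $|A_{k}|_{\delta} > \delta^{-\alpha^{*}}$ or grows the covering number by a factor $\delta^{-\epsilon}$ --- is essentially the paper's argument. The paper merely packages the same iteration differently: it builds a greedy sequence $H_{n+1} = (H_{n} + c_{n+1}B)_{\delta}$ with $c_{n+1}$ maximising the growth, uses the pigeonhole bound $|H_{n}| \leq N\delta^{-1}$ to locate a step where even the maximal growth is $\leq \delta^{-\epsilon/2}$, and applies Theorem \ref{mainTechnical} once, as a contradiction, at that step; it handles the fact that $H_{n} \subset [0,n]$ by translating a $1/N$-fraction of $H_{n}$ into $[0,1]$ rather than by your dilation. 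One step of yours is wrong as stated: you justify $C \cap [\tfrac12,1] \neq \emptyset$ by applying the Frostman condition \eqref{form115} at $r = \tfrac12$, but that condition is only assumed for $\delta \leq r \leq \delta^{\epsilon_{0}}$, so $C$ could perfectly well lie entirely in $[0,\tfrac{1}{10}]$. This is repairable --- e.g.\ pigeonhole a dyadic block $[2^{-j-1},2^{-j}]$ with $2^{-j} \geq \delta^{\epsilon_{0}}$ carrying a fraction $\gtrsim 1/\log(1/\delta)$ of $C$ and rescale it to $[\tfrac12,1]$, absorbing the logarithmic loss into your slack $\gamma \rightsquigarrow \gamma - \eta'$ --- and the paper itself glosses over the support restriction by feeding the counting measure on all of $C \subset [0,1]$ into Theorem \ref{mainTechnical}; but as written your non-emptiness claim does not follow from the hypotheses.
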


\begin{remark} A classical projection theorem of Kaufman \cite{Ka} implies the existence of $c \in C$ such that $|B + cB|_{\delta} \gtrapprox \max\{\delta^{-\beta},\delta^{-\gamma}\}$. For $\gamma > \beta$, a recent sharpening of Kaufman's theorem by the author and Shmerkin \cite{2021arXiv210603338O} even yields $|B + cB|_{\delta} \geq \delta^{-\gamma - \eta}$ for some $\eta = \eta(\beta,\gamma) > 0$, and for $\delta > 0$ small enough (to be clear, this statement is only a corollary of the main result in \cite{2021arXiv210603338O}). In comparison, Corollary \ref{longSumCor} gives a far more substantial improvement, but at the cost of adding the number of summands. I give the simple proof straight away. \end{remark}

\begin{proof}[Proof of Corollary \ref{longSumCor}] Start by applying Theorem \ref{mainTechnical} with 
\begin{displaymath} \alpha := \beta + \gamma(1 - \beta) - \eta \in (\beta,1), \quad \beta, \quad \kappa := \beta, \quad \text{and} \quad \gamma. \end{displaymath}
Note that $\gamma > (\alpha - \beta)/(1 - \beta)$, so the parameters are admissible. Let $\epsilon,\epsilon_{0},\delta_{0} \in (0,\tfrac{1}{2}]$ be the constants given by Theorem \ref{mainTechnical} with $\alpha,\beta,\gamma,\kappa$. Let $\nu := |C|^{-1} \cdot \mathcal{H}^{0}|_{C}$ be the normalised counting measure on $C$, which satisfies the Frostman condition $\nu(B(x,r)) \leq r^{\gamma}$ for all $x \in \R$ and $\delta \leq r \leq \delta^{\epsilon_{0}}$ by \eqref{form115}. We also note that $|B| \geq \delta^{-\beta}$ by \eqref{form115} applied with $r = \delta$, and $B$ satisfies the $\kappa = \beta$-dimensional Frostman condition required in Theorem \ref{mainTechnical}.

We construct a sequence of sets $H_{n} \subset \delta \cdot \Z$, $n \in \N$, with the following greedy algorithm. We first define $H_{1} := (c_{1}B)_{\delta}$ arbitrarily, where $A_{\delta} := (\delta \cdot \Z) \cap A(\delta)$. Then, we assume that $H_{n}$ has already been defined for some $n \geq 1$, and we let
\begin{displaymath} H_{n + 1} := H_{n} + (c_{n + 1}B)_{\delta} \subset \delta \cdot \Z, \end{displaymath}
where $c_{n + 1} \in C$ maximises $|H_{n} + cB|_{\delta}$ among all choices $c \in C$. We observe (by induction) that $H_{n} \subset (\delta \cdot \Z) \cap [0,n]$, so $|H_{n}| \leq n\delta^{-1}$. For arbitrary $N \in \N$ with $N \geq 2$, it follows from the pigeonhole principle that there exists $n \in \{1,\ldots,N - 1\}$ such that
\begin{equation}\label{form122} |H_{n + 1}| \leq 2(N\delta^{-1})^{1/(N - 1)} |H_{n}| \leq 4\delta^{-1/(N - 1)}|H_{n}|. \end{equation}
Indeed, if the first inequality failed for every $n \in \{1,\ldots,N - 1\}$, then
\begin{displaymath} |H_{N}| > 2(N\delta^{-1})^{1/(N - 1)}|H_{N - 1}| > \ldots > 2^{N - 1}(N\delta^{-1})^{(N - 1)/(N - 1)}|H_{1}| \geq 2^{N - 1}N\delta^{-1}, \end{displaymath}
contradicting that $H_{N} \subset (\delta \cdot \Z) \cap [0,N]$. By definition of $H_{n + 1}$, \eqref{form122} implies
\begin{equation}\label{form116} |H_{n} + cB|_{\delta} \lesssim |H_{n + 1}| \leq 4\delta^{-1/(N - 1)}|H_{n}|, \qquad c \in C. \end{equation}
We now choose $N \in \N$ so large that $4\delta^{-1/(N - 1)} \leq \delta^{-\epsilon/2}$, where $\epsilon = \epsilon(\alpha,\beta,\gamma) > 0$ was one of the constants produced by Theorem \ref{mainTechnical}. Since $B$ and $\nu$ satisfy the hypotheses of Theorem \ref{mainTechnical}, we see from \eqref{form116} that $A := H_{n}$ must fail the hypotheses. However, the only hypotheses on $A$ in Theorem \ref{mainTechnical} are 
\begin{displaymath} A \subset (\delta \cdot \Z) \cap [0,1] \quad \text{and} \quad |A| \leq \delta^{-\alpha}. \end{displaymath}
Of course $H_{n} \not\subset [0,1]$, but this is not really relevant: we may find $k \in \{0,\ldots,n - 1\}$ such that $|H_{n} \cap [k,k + 1]| \geq \tfrac{1}{N}|H_{n}|$. Now, defining instead $A := (H_{n} \cap [k,k + 1]) - \{k\}$, we have $A \subset (\delta \cdot \Z) \cap [0,1]$, and $|A + cB|_{\delta} \leq \delta^{-\epsilon/2}|H_{n}| \leq \delta^{-\epsilon}|A|$ by \eqref{form116}, for $\delta > 0$ so small that $\delta^{-\epsilon/2} \geq N$. This violates Theorem \ref{mainTechnical}, unless
\begin{displaymath} |H_{n}| \geq |A| > \delta^{-\alpha} = \delta^{-\beta - \gamma(1 - \beta) + \eta}, \end{displaymath}
and this is what the corollary claimed.  \end{proof}

The $ABC$ sum-product problem is, of course, related to the highly active area of sum-product theory. The main open question is the Erd\H{o}s-Szemer\'edi sum-product conjecture \cite{MR820223}: if $A \subset \R$ or $A \subset \mathbb{Z}_{p}$ is a finite set ($p \in \N$ is prime), the E-S conjecture asks to prove that 
\begin{displaymath} \max\{|A + A|,|A \cdot A|\} \gtrsim_{\epsilon} |A|^{2 - \epsilon}, \qquad \epsilon > 0. \end{displaymath}
The research around this problem is too active to survey here: I only mention the papers \cite{MR4469270} of Rudnev-Stevens and \cite{MR4565644} of Mohammadi-Stevens for some current world records, and further references. For results on the the $\delta$-discretised variant of the Erd\H{o}s-Szemer\'edi problem, see \cite{MR4283564} by Guth-Katz-Zahl, and \cite{MR4452675} by D\k{a}browski, the author, and Villa.

Bourgain's $\delta$-discretised sum-product estimate, Theorem \ref{bourgain}, has been extended in various ways beyond the real line. For example, He \cite{MR4148151} found a version of the theorem in $\R^{n}$. Closely related are also the works \cite{MR2966656,MR2358056} by Bourgain-Gamburd, \cite{MR4041116} by He, \cite{MR4244524} by He-de Saxc\'e, \cite{MR3529116} by Benoist-de Saxc\'e, and \cite{MR4299175} by Li. These papers contain $\delta$-discretised sum-product or product theorems in various Lie groups. Viewing the $\delta$-discretised $ABC$ sum-product problem as a special case of a $\delta$-discretised incidence problem between points and $\delta$-tubes in $\R^{2}$, the papers \cite{MR4447307,GSW19} are also relevant.

Theorems \ref{bourgain} and \ref{mainTechnical} can be viewed as statements concerning linear projections of planar sets, as discussed more in the next subsection. Starting with this interpretation, one may ask if analogous statements hold for non-linear projections. Examples of particular interest are the \emph{pinned distance projections} $\bigtriangleup_{x}(y) = |x - y|$ and the \emph{radial projections} $\pi_{x}(y) = (x - y)/|x - y|$. Again, the literature is too broad for a survey, but see the recent papers \cite{Shmerkin20} by Shmerkin, \cite{2021arXiv211209044S} by Shmerkin-Wang, and \cite{2021arXiv210807311R} by Raz-Zahl for recent exciting developments and more references.

Finally, Conjecture \ref{mainConjecture} was recently solved by the author \cite{MR4388762} for Ahlfors-regular sets $A,B \subset [0,1]$. In fact, a much stronger result can be obtained for such sets. Let $\alpha,\beta \in (0,1)$. Assume that $A,B \subset \R$ are closed sets, $A$ is $\alpha$-Ahlfors-regular and $B$ is $\beta$-Ahlfors-regular. Then
\begin{displaymath} \Hd \{c \in \R : \dim (A + cB) < \alpha + \eta\} = 0 \end{displaymath} 
for $\eta := \beta(1 - \alpha)/(2 - \alpha) > 0$. (The paper \cite{MR4388762} also contains a $\delta$-discretised version.)

\subsection{Comparison to classical projection theorems} A popular topic in fractal geometry is to study the orthogonal projections of subsets of $\R^{d}$. In this section we will see what "classical" projection theorems in fractal geometry have to say about the size of $A + cB$.

For $e \in S^{1}$, let $\pi_{e} \colon \R^{2} \to \spa(e)$ be the orthogonal projection. A theorem of Kaufman \cite{Ka} from 1968, sharpening a seminal result of Marstrand \cite{Mar}, states the following: if $K \subset \R^{2}$ is a compact set with dimension $\Hd K = t$, then
\begin{equation}\label{form89} \Sigma(K,s) := \Hd \{e \in S^{1} : \Hd \pi_{e}(K) \leq s\} \leq s, \qquad 0 \leq s < t. \end{equation}
Another classical estimate, due Peres-Schlag \cite{MR1749437} but building on a Fourier-analytic technique introduced by Falconer \cite{MR673510}, shows that 
\begin{equation}\label{falconer} \Sigma(K,s) \leq \max\{1 + s - t,0\}, \qquad 0 \leq s \leq t. \end{equation} 
A folklore conjecture (made explicit in \cite{MR3162243}) proposes to improve \eqref{form89}-\eqref{falconer} to $\Sigma(K,s) \leq \max\{2s - t,0\}$ for $0 \leq s < t$. Bourgain \cite{Bourgain10} showed that $\Sigma(K,s) \to 0$ as $s \to t/2$, which supports the conjecture. A recent preprint \cite{2021arXiv210603338O} of the author and Shmerkin additionally shows that $\Sigma(K,s) \leq s - \epsilon$ for some $\epsilon = \epsilon(s,t) > 0$, for all $0 \leq s < t$.

The connection between orthogonal projections and the $A + cB$ problem is the following. Take $K = A \times B$, where $A,B \subset \R$. Then, for $e \in S^{1} \, \setminus \{(0,1),(0,-1)\}$, the projection $\pi_{e}(K)$ can, up to rescaling, be rewritten as $A + cB$, for a suitable $c = c(e) \in \R$. With this in mind, the bounds \eqref{form89}-\eqref{falconer} can be used to deduce the following. 

Let $0 < \beta \leq \alpha < 1$. Assume that $A,B \subset \R$ are Borel sets with $\Hd A = \alpha$ and $\Hd B = \beta$. Then, \eqref{form89}-\eqref{falconer} applied with $t := \Hd (A \times B) \geq \alpha + \beta$ yield
\begin{displaymath} \Hd \{c \in \R : \Hd (A + cB) \leq \alpha\} \leq \min\{\alpha,1 - \beta\}. \end{displaymath}
In contrast, letting $\eta \to 0$ in Corollary \ref{hausdorffCor} gives the upper bound $(\alpha - \beta)/(1 - \beta)$. This bound is $< \alpha$ for all $\alpha < 1$, and also $< 1 - \beta$ whenever $0 < \beta \leq \alpha < \tfrac{3}{4}$. If $\alpha > \tfrac{3}{4}$, then the "$1 - \beta$" estimate coming from \eqref{falconer} is better for some values of $\beta$, e.g. $\beta = \tfrac{1}{2}$.

The conjectured bound $\Sigma(K,s) \leq \max\{2s - t,0\}$ would imply the (Hausdorff dimension version of) Conjecture \ref{mainConjecture}:
\begin{displaymath} \Hd \{c \in \R : \Hd (A + cB) \leq \alpha\} = \Sigma(A \times B,\alpha) \leq \max\{2\alpha - t,0\} \leq \alpha - \beta. \end{displaymath}
To summarise, Corollary \ref{hausdorffCor} is stronger than all previous results in the case $K = A \times B$ and $s = \alpha = \Hd A < \tfrac{3}{4}$, whereas the conjecture $\Sigma(K,s) \leq \max\{2s - t,0\}$ is even stronger than (the Hausdorff dimension version of) Conjecture \ref{mainConjecture}. 

\subsection{Paper outline and proof sketch}\label{s:outline} The proof of Theorem \ref{main} has two distinct components: the first one is a reduction to Theorem \ref{mainTechnical4}, which differs from Theorem \ref{main} in the following aspects: (a) $\nu$ satisfies a Frostman condition on all scales $\delta \leq r \leq 1$, (b) the set $B$ has small doubling, that is $|B + B| \leq \delta^{-\epsilon}|B|$, and (c) the conclusion $|\pi_{c}(G)|_{\delta} \geq \delta^{-\epsilon}|A|$ is only required for $G = A \times B$. These reductions are performed in several steps:
\begin{align*} \text{Theorem \ref{mainTechnical4}} \quad & \stackrel{\S \ref{s:subsetReduction}}{\Longrightarrow} \quad \text{Theorem \ref{mainTechnical3}} \quad \stackrel{\S \ref{s:FrostmanReduction}}{\Longrightarrow} \quad \text{Theorem \ref{mainTechnical2}}\\
& \stackrel{\S \ref{s:doublingReduction}}{\Longrightarrow} \quad \text{Theorem \ref{mainTechnical}} \quad \,\,\, \stackrel{\S \ref{s:subsetReductionB}}{\Longrightarrow} \quad \text{Theorem \ref{mainSubset2}}\\
& \stackrel{\S \ref{s:toytoyReduction}}{\Longrightarrow} \quad \text{Theorem \ref{mainSubset1}} \quad \,\, \stackrel{\S \ref{s:mainProof}}{\Longrightarrow} \quad \text{Theorem \ref{main}}.  \end{align*} 
The outline of the paper is that the reduction from Theorem \ref{mainTechnical} to Theorem \ref{mainTechnical4} is performed first, then Theorem \ref{mainTechnical4} is proved with a direct argument, and finally Theorem \ref{main} is reduced to Theorem \ref{mainTechnical} in Section \ref{appB}. 

The additional assumptions (a)-(c) in Theorem \ref{mainTechnical4} are technically important. However, at the current level of discussion, all the theorems above are indistinguishable. So, for example, the reader may think that the following outline concerns the proof of Theorem \ref{mainTechnical}, which has the simplest statement. 

For the sake of exposition, I make the following additional assumptions on $A$ and $B$. Both sets have a "tree" (or "Cantor set") structure: for a suitable parameter $m \in \N$, each dyadic interval $I \in \mathcal{D}_{ms}$ intersecting $A$ contains exactly $R_{A}(s)$ sub-intervals in $\mathcal{D}_{m(s + 1)}$ which intersect $A$. The same is assumed of $B$. The numbers $R_{A}(s)$ and $R_{B}(s)$ are known as the \emph{branching numbers} of $A$ and $B$, respectively. Assume that the scale parameter $\delta > 0$ has the special form $\delta = 2^{-mN}$ for some $N \in \N$ (thus $R_{A}(s) = 1 = R_{B}(s)$ for $s \geq N$, since $A,B$ were assumed to be $\delta$-separated). We make even more assumptions:
\begin{itemize}
\item[(P1)] For every $s \in \N$, either $R_{B}(s) = 1$ or $R_{A}(s) = 2^{m}$.
\item[(P2)] $|B| = \delta^{-\beta}$, and for every $s \in \N$, either $R_{B}(s) = 1$ or $R_{B}(s) = 2^{m}$.
\end{itemize} 
Property (P2) needs the small doubling assumption $|B + B| \leq \delta^{-\epsilon}|B|$. Now, as we will see in a moment, the key question turns out to be: given a scale $s \in \N$ with $R_{B}(s) = 1$, what upper bound can we guarantee for $R_{A}(s)$? It turns out that we can easily use (P1)-(P2) to deduce an answer.

Assume that $R_{A}(s) \geq 2^{\Gamma m}$ for \textbf{all} $s \in \{0,\ldots,N - 1\} =: [N]$ with $R_{B}(s) = 1$. Write $\mathcal{N} := \{s \in [N] : R_{B}(s) = 1\}$, and note that $R_{A}(s) = 2^{m}$ for all $s \in [N] \, \setminus \, \mathcal{N}$ by assumption (P1). Now, we may calculate a lower bound on the cardinality of $A$ as follows:
\begin{equation}\label{form90} 2^{\alpha mN} \geq |A| = \prod_{s \in [N]} R_{A}(s) = \prod_{s \in [N] \, \setminus \, \mathcal{N}} R_{A}(s) \cdot \prod_{s \in \mathcal{N}} R_{A}(s) \geq 2^{m(N - |\mathcal{N}|)} \cdot 2^{\Gamma m|\mathcal{N}|}. \end{equation} 
On the other hand, by assumption (P2), we have
\begin{equation}\label{form92} 2^{\beta m N} = |B| = \prod_{s \in [N] \, \setminus \, \mathcal{N}} 2^{m} = 2^{m(N - |\mathcal{N}|)}, \end{equation}
so may solve $N - |\mathcal{N}| = \beta N$ and $|\mathcal{N}| = (1 - \beta)N$. Plugging this information into \eqref{form90} yields $\Gamma \leq (\alpha - \beta)/(1 - \beta)$. This is where the numerology in Theorem \ref{mainTechnical} comes from. Namely, the argument above shows that if $\Gamma > (\alpha - \beta)/(1 - \beta)$, then there exists at least one scale $s \in [N]$ such that $R_{A}(s) \leq 2^{\Gamma m}$. In fact, the same must be true for a positive fraction of the scales, say $\mathcal{G} \subset [N]$, where $|\mathcal{G}|/N$ only depends on $\Gamma - (\alpha - \beta)/(1 - \beta)$.

After this observation, we focus attention separately on pieces of $A \times B$ of the form $(A \cap I) \times (B \cap J)$, where $I,J \in \mathcal{D}_{ms}$ are intervals intersecting $A,B$, respectively, and $s \in \mathcal{G}$. By definition, $|A \cap I|_{m(s + 1)} = R_{A}(s) \leq 2^{\Gamma m}$ for some $\Gamma$ slightly larger than $(\alpha - \beta)/(1 - \beta)$. To be precise, we choose $(\alpha - \beta)/(1 - \beta) < \Gamma < \gamma$, where $\gamma$ is the Frostman exponent of the measure $\nu$ in Theorem \ref{mainTechnical}. If we \textbf{additionally} knew that $|B \cap J|_{2^{-m(s + 1)}} = R_{B}(s) \geq 2^{\epsilon m}$ for some $\epsilon > 0$, and the points in $B \cap J$ are well enough separated, we could at this point use an elementary argument (essentially the "potential theoretic method" due to Kaufman \cite{Ka}) to deduce that
\begin{equation}\label{form91} |(A \cap I) + c(B \cap J)|_{2^{-m(s + 1)}} \geq 2^{\epsilon m}|A \cap I|_{2^{-m(s + 1)}} \end{equation}
for a generic choice $c \in C = \spt (\nu)$. This argument is crucially based on $\gamma > \Gamma$, see Lemma \ref{lemma3} for the details. After this, summing up the increments \eqref{form91} for all $s \in \mathcal{G}$ would complete the proof of Theorem \ref{mainTechnical}. 

A major problem is that, as a matter of fact, $|B \cap J|_{2^{-m(s + 1)}} = R_{B}(s) = 1$ for all $s \in \mathcal{G}$. This follows from our assumption (P1), since $R_{A}(s) \leq 2^{\Gamma m} < 2^{m}$ for all $s \in \mathcal{G}$. To solve the problem, we follow Bourgain's proof of Theorem \ref{bourgain} rather faithfully: instead of considering individual scales $s \in [N]$, we recombine consecutive elements of $[N]$ into longer intervals $\mathcal{I} \subset [N]$ where the branching of $B$ is small but non-trivial, say $R_{B}(\mathcal{I}) = 2^{\epsilon m|\mathcal{I}|}$. Then, we carry out calculations similar to the ones we saw at \eqref{form90}-\eqref{form92} to make sure that also $R_{A}(\mathcal{I}) \leq 2^{\Gamma m |\mathcal{I}|}$ for a subset of these intervals $\mathcal{I}$ with substantial total length. At the end of the day, the intervals $\mathcal{I}$ with $R_{B}(\mathcal{I)} = 2^{\epsilon m|\mathcal{I}|}$ and $R_{A}(\mathcal{I}) \leq 2^{\Gamma m |\mathcal{I}|}$, will actually play the role we had written for the scales $\mathcal{G}$ in the discussion above.

There are numerous places in the arguments below where I either follow Bourgain's argument for the case $A = B$, or at least draw heavy inspiration from such an argument. Bourgain's influence on this paper will be treated as an absolute constant, and not spelled out every time separately.

\subsection{Acknowledgements} I would like to thank Pablo Shmerkin for clarifying a point about applying his inverse theorem \cite[Theorem 2.1]{Sh}, see Remark \ref{rem2}. I'm also grateful to the reviewer for reading the manuscript carefully and making many helpful suggestions.

\section{Notation and preliminaries}

\subsection{Dyadic cubes and covering numbers}\label{s:notation} Let $\mathcal{D}_{n}$ be the family of dyadic cubes in $\R^{d}$ with side-length $2^{-n}$. We will only use this notation for $n \geq 0$. For $Q = x + [0,2^{-n})^{d} \in \mathcal{D}_{n}$, we associate the affine map $T_{Q}(y) := 2^{n}(y - x)$, which rescales $Q$ to $[0,1)^{d}$. 

If $\mu$ is a Borel measure on $\R^{d}$, and $E \subset \R^{d}$ is a Borel set with $\mu(E) > 0$, we write $\mu_{E} := \mu(E)^{-1}\mu|_{E}$ for the \emph{renormalised restriction of $\mu$ to $E$}. This notation is most commonly used in the case $E = Q \in \mathcal{D}_{n}$. In this special case, we additionally define the notation
\begin{displaymath} \mu^{Q} := T_{Q}\mu_{Q}. \end{displaymath}
Here $f\nu(H) := \nu(f^{-1}H)$ refers, in general, to the push-forward of a measure $\nu$ under a map $f$. For a dyadic rational $r = 2^{-n}$, and a bounded set $A \subset \R^{d}$, we write $|A|_{r}$ for the least number of cubes in $\mathcal{D}_{n}$ required to cover $A$ (in the introduction, we used the same notation for the $r$-covering number, which is comparable up to a multiplicative constant). We will also write $A(r)$ for the open $r$-neighbourhood of $A$, and
\begin{displaymath} A_{r} = (r \cdot \Z) \cap A(r). \end{displaymath}
Finally, for $n \in \N$, $n \geq 1$, we abbreviate $[n] := \{0,\ldots,n - 1\}$. 

\subsection{Entropy} If $(\Omega,\mu)$ is a probability space, and $\mathcal{F}$ is a countable $\mu$-measurable partition of $\Omega$, we denote the \emph{$\mathcal{F}$-entropy of $\mu$} by
\begin{displaymath} H(\mu,\mathcal{F}) := \sum_{F \in \mathcal{F}} \mu(F) \log \tfrac{1}{\mu(F)}, \end{displaymath} 
with the convention $0 \cdot \log 0 = 0$. If $\mathcal{E},\mathcal{F}$ are two countable partitions, we denote the \emph{conditional $\mathcal{F}$-entropy of $\mu$ relative to $\mathcal{E}$} by
\begin{equation}\label{conditionalEntropy} H(\mu,\mathcal{F} \mid \mathcal{E}) := \sum_{E \in \mathcal{E}} \mu(E) H(\mu_{E},\mathcal{F}). \end{equation}
If $\mathcal{F}$ \emph{refines} $\mathcal{E}$ (each element of $\mathcal{E}$ can be written as a disjoint union of elements of $\mathcal{F}$), the conditional entropy can be alternatively written as
\begin{equation}\label{form83} H(\mu,\mathcal{F} \mid \mathcal{E}) = H(\mu,\mathcal{F}) - H(\mu,\mathcal{E}). \end{equation}
For a proof, see \cite[Proposition 3.3]{MR3590535}. In practice, we will only be concerned with $\mathcal{D}_{n}$-entropies of compactly supported Borel probability measures on $\R^{d}$, where $\mathcal{D}_{n}$ is the partition of $\R^{d}$ into dyadic cubes of side-length $2^{-n}$. In this special case $\mathcal{D}_{n + 1}$ always refines $\mathcal{D}_{n}$, so the formula \eqref{form83} is available. We record the following simple lemma, whose proof is a combination of \cite[Lemma 3.5]{MR3590535} and \cite[Remark 3.6]{MR3590535}:
\begin{lemma}\label{entropyLemma} Let $\mu$ be a Borel probability measure on $\R^{d}$, and let $\pi \colon \R^{d} \to \R^{D}$ be linear. Let $n \in \N$, and let $0 = n_{0} < n_{1} < \ldots < n_{h} = n$ be an arbitrary partition of $\{0,\ldots,n\}$. Then,
\begin{displaymath} H(\pi\mu,\mathcal{D}_{n}) \geq \sum_{j = 0}^{h - 1} \sum_{Q \in \mathcal{D}_{n_{j}}} \mu(Q) \cdot H(\pi \mu^{Q},\mathcal{D}_{n_{j + 1} - n_{j}} \mid \mathcal{D}_{0}). \end{displaymath} 
The inner summation only runs over those $Q \in \mathcal{D}_{n_{j}}$ with $\mu(Q) > 0$.
 \end{lemma}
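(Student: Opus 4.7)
The plan is to obtain the inequality by three standard entropy manipulations: a chain-rule telescope over scales, concavity of conditional entropy in the measure, and a rescaling identity that exploits the linearity of $\pi$.

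First, since the dyadic partitions $\mathcal{D}_{n_0}, \mathcal{D}_{n_1}, \ldots, \mathcal{D}_{n_h}$ of $\R^D$ form a chain of refinements (each $\mathcal{D}_{n_{j+1}}$ refines $\mathcal{D}_{n_j}$), I would apply \eqref{form83} iteratively and telescope:
\begin{displaymath}
H(\pi\mu, \mathcal{D}_n) = H(\pi\mu, \mathcal{D}_0) + \sum_{j=0}^{h-1} H(\pi\mu, \mathcal{D}_{n_{j+1}} \mid \mathcal{D}_{n_j}),
\end{displaymath}
and then discard the non-negative boundary term $H(\pi\mu, \mathcal{D}_0) \geq 0$ (recall that $n_0 = 0$). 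Second, for each fixed $j$ I would use the decomposition $\pi\mu = \sum_{Q \in \mathcal{D}_{n_j}} \mu(Q)\,\pi\mu_Q$ coming from the dyadic cubes of $\R^d$ at scale $2^{-n_j}$, and invoke concavity of $H(\cdot, \mathcal{F} \mid \mathcal{E})$ as a functional of the measure (a short calculation starting from the definition \eqref{conditionalEntropy}, inherited from the usual concavity of unconditional entropy) to obtain
\begin{displaymath}
H(\pi\mu, \mathcal{D}_{n_{j+1}} \mid \mathcal{D}_{n_j}) \geq \sum_{Q \in \mathcal{D}_{n_j}} \mu(Q) \cdot H(\pi\mu_Q, \mathcal{D}_{n_{j+1}} \mid \mathcal{D}_{n_j}).
\end{displaymath}

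Third, I would rescale each $Q$-term via the identity $\pi\mu^Q = S_Q\,\pi\mu_Q$, where $S_Q(z) := 2^{n_j}(z - \pi x_Q)$ is the affine map intertwining $\pi$ with $T_Q$ (that is, $\pi T_Q = S_Q \pi$, an identity that requires the linearity of $\pi$). The dilation component of $S_Q$ sends the target-space partitions $\mathcal{D}_{n_j}$ and $\mathcal{D}_{n_{j+1}}$ to $\mathcal{D}_0$ and $\mathcal{D}_{n_{j+1} - n_j}$ respectively, so a direct change of variables recasts $H(\pi\mu_Q, \mathcal{D}_{n_{j+1}} \mid \mathcal{D}_{n_j})$ as $H(\pi\mu^Q, \mathcal{D}_{n_{j+1} - n_j} \mid \mathcal{D}_0)$. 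Combining the three steps delivers the stated inequality.

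The main technical subtlety, and the point I would treat most carefully, is the translation component of $S_Q$: the shift by the (generally) non-dyadic vector $\pi x_Q$ does not preserve dyadic cubes, so the rescaled partitions differ from the standard ones by a fixed translate. This is precisely where the cited combination \cite[Lemma 3.5 and Remark 3.6]{MR3590535} enters: either by replacing the dyadic partition by one adapted to the support of $\pi\mu^Q$, or by quoting the direct observation that shifts affect dyadic entropy only by a dimension-dependent constant that is neutralised by the $\mid \mathcal{D}_0$ conditioning, the translation is absorbed without loss to the inequality.
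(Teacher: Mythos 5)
The paper does not actually prove this lemma; it is quoted as a combination of \cite[Lemma 3.5]{MR3590535} and \cite[Remark 3.6]{MR3590535}, so your proposal can only be compared with the standard argument behind those citations, which is indeed the telescope--concavity--rescaling route you outline. Your first two steps are correct and exact: \eqref{form83} telescopes to $H(\pi\mu,\mathcal{D}_{n}) = H(\pi\mu,\mathcal{D}_{0}) + \sum_{j} H(\pi\mu,\mathcal{D}_{n_{j+1}} \mid \mathcal{D}_{n_{j}})$, and concavity of conditional entropy applied to the decomposition $\pi\mu = \sum_{Q} \mu(Q)\,\pi\mu_{Q}$ gives $H(\pi\mu,\mathcal{D}_{n_{j+1}} \mid \mathcal{D}_{n_{j}}) \geq \sum_{Q}\mu(Q)\,H(\pi\mu_{Q},\mathcal{D}_{n_{j+1}} \mid \mathcal{D}_{n_{j}})$.

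The gap is in the third step. The change of variables yields, exactly, $H(\pi\mu^{Q},\mathcal{D}_{n_{j+1}-n_{j}} \mid \mathcal{D}_{0}) = H(\pi\mu_{Q},\mathcal{D}_{n_{j+1}}+v_{Q} \mid \mathcal{D}_{n_{j}}+v_{Q})$ with $v_{Q} = \pi x_{Q}$; it does not recast this quantity as $H(\pi\mu_{Q},\mathcal{D}_{n_{j+1}} \mid \mathcal{D}_{n_{j}})$. To finish along your route you would need the per-cube inequality $H(\pi\mu_{Q},\mathcal{D}_{n_{j+1}} \mid \mathcal{D}_{n_{j}}) \geq H(\pi\mu_{Q},\mathcal{D}_{n_{j+1}}+v_{Q} \mid \mathcal{D}_{n_{j}}+v_{Q})$, and this is false in general: with $D = 1$, let $\pi\mu_{Q}$ consist of two equal atoms lying in a single interval of $\mathcal{D}_{n_{j+1}}$ but separated by a cell boundary of $\mathcal{D}_{n_{j+1}}+v_{Q}$, and contained in a single cell of both level-$n_{j}$ grids; then the left-hand side is $0$ while the right-hand side is $\log 2$. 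The $\mid \mathcal{D}_{0}$ conditioning does not neutralise this mismatch, since it only subtracts $H(\pi\mu^{Q},\mathcal{D}_{0}) = H(\pi\mu_{Q},\mathcal{D}_{n_{j}}+v_{Q})$, which controls the coarse-scale overlap, not the disagreement of the fine grids (in the example above it subtracts nothing). What your sketch honestly delivers is the inequality with an extra additive error of the form $-C(d,D,\|\pi\|)\cdot h$, obtained by comparing translated and untranslated grids at each of the $h$ blocks via the "bounded mutual multiplicity" fact recorded after the lemma. That weaker form is all the paper ever uses --- compare \eqref{entropyIneq}, which already carries the $-h\cdot CL$ term --- but it is not the clean statement of the lemma, and closing that last step requires the more careful bookkeeping of the cited reference rather than the shortcut you describe.
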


A basic fact about entropy (which follows from Jensen's inequality) is that
\begin{displaymath} |\{F \in \mathcal{F} : \mu(F) > 0\}| \leq N \quad \Longrightarrow \quad H(\mu,\mathcal{F}) \leq \log N. \end{displaymath}
In particular, if $\pi \colon \R^{d} \to \R^{D}$ is $L$-Lipschitz in Lemma \ref{entropyLemma},  then
\begin{displaymath} H(\pi\mu^{Q},\mathcal{D}_{n_{j + 1} - n_{j}} \mid \mathcal{D}_{0}) =  H(\pi\mu^{Q},\mathcal{D}_{n_{j + 1} - n_{j}}) -  H(\pi\mu^{Q},\mathcal{D}_{0}) \geq H(\pi\mu^{Q},\mathcal{D}_{n_{j + 1} - n_{j}}) - CL, \end{displaymath} 
where $C \geq 1$ only depends on $d,D$. Therefore, the lower bound of Lemma \ref{entropyLemma} can be upgraded to
\begin{equation}\label{entropyIneq} H(\pi\mu,\mathcal{D}_{n}) \geq \left( \sum_{j = 0}^{h - 1} \sum_{Q \in \mathcal{D}_{n_{j}}} \mu(Q) \cdot H(\pi \mu^{Q},\mathcal{D}_{n_{j + 1} - n_{j}}) \right) - h \cdot CL. \end{equation}
We mention two further useful fact about entropy: first, if $\mathcal{E},\mathcal{F}$ are two countable $\mu$-measurable partitions such that 
\begin{displaymath} |\{F \in \mathcal{F} : F \cap E_{0}\}| \leq N \quad \text{and} \quad |\{E \in \mathcal{E} : E \cap F_{0}\}| \leq N \end{displaymath}
for all $E_{0} \in \mathcal{E}$ and $F_{0} \in \mathcal{F}$, then $|H(\mu,\mathcal{E}) - H(\mu,\mathcal{F})| \leq \log N$. Second, entropy (and also conditional entropy) is concave. We will use the convexity of entropy in the following form: if $\mu,\nu$ are two Borel probability measures on $\R^{d}$, then 
\begin{equation}\label{form84} H(\mu \ast \nu,\mathcal{D}_{n}) \geq \int H(\mu_{x},\mathcal{D}_{n}) \, d\nu(x), \end{equation} 
where $\mu_{x}$ is the probability measure defined by $\mu_{x}(H) = \mu(H - x)$. Since $(\mu \ast \nu)(H) = \int \mu_{x}(H) \, d\nu(x)$ for all Borel sets $H \subset \R^{d}$, one may view $\mu \ast \nu$ as a convex combination of the measures $\mu_{x}$. Formally, \eqref{form84} is deduced by applying Jensen's inequality to the concave function $f(r) = r\log(1/r)$ on $[0,1]$, and the random variable $X \colon x \mapsto \mu_{x}(Q)$ in the probability space $(\R^{d},\nu)$ (for fixed $Q \in \mathcal{D}_{n}$).

\section{Three initial reductions} 

This section contains a reduction of Theorem \ref{mainTechnical} to a special case, where we additionally assume that $|B + B| \leq \delta^{-\epsilon_{B}}|B|$, and $\nu(B(x,r)) \leq 40 \cdot r^{\gamma}$ for all $x \in \R$ and $r \geq \delta$ (see Theorem \ref{mainTechnical4}). It seems difficult to do achieve this reduction in a "single pass": instead, we add the extra assumptions in two separate steps (Sections \ref{s:doublingReduction} and \ref{s:FrostmanReduction}). After these steps, we arrive at Theorem \ref{mainTechnical3}, where the assumptions are present, but unfortunately the conclusion is also a little stronger. Then, the final reduction to Theorem \ref{mainTechnical4} "restores" the weaker conclusion, but maintains the additional assumptions. This is the version of Theorem \ref{mainTechnical} we will eventually be able to prove directly.

\subsection{Reduction to the case where $B$ has small doubling}\label{s:doublingReduction}

The purpose of this section is to reduce the proof of Theorem \ref{mainTechnical} to the following version, where the hypothesis $|B + B| \leq \delta^{-\epsilon_{B}}|B|$ has been added. This does not come for free: the price to pay is that the conclusion of Theorem \ref{mainTechnical2} is also a little stronger (that is, more difficult to prove).

\begin{thm}\label{mainTechnical2} Let $0 < \beta \leq \alpha < 1$ and $\kappa > 0$. Then, for every $\gamma \in ((\alpha - \beta)/(1 - \beta),1]$, there exist $\epsilon_{0},\epsilon,\epsilon_{B},\delta_{0},\rho \in (0,\tfrac{1}{2}]$, depending only on $\alpha,\beta,\gamma,\kappa$, such that the following holds. Let $\delta \in 2^{-\N}$ with $\delta \in (0,\delta_{0}]$, and let $A,B \subset (\delta \cdot \Z) \cap [0,1]$ satisfy the following hypotheses:
\begin{enumerate}
\item[(A)] \label{A} $|A| \leq \delta^{-\alpha}$.
\item[(B)] \label{B} $|B| \geq \delta^{-\beta}$, and $B$ satisfies the following Frostman condition: 
\begin{displaymath} |B \cap B(x,r)| \leq r^{\kappa}|B|, \qquad \delta \leq r \leq \delta^{\epsilon_{0}}. \end{displaymath} 
Assume moreover that $|B + B| \leq \delta^{-\epsilon_{B}}|B|$.
\end{enumerate}
Further, let $\nu$ be a Borel probability measure with $\spt (\nu) \subset [0,1]$, and satisfying the Frostman condition $\nu(B(x,r)) \leq r^{\gamma}$ for $x \in \R$ and $\delta \leq r \leq \delta^{\epsilon_{0}}$. Then, there exists a point $c \in \spt (\nu)$ such that $|A' + cB|_{\delta} \geq \delta^{-\epsilon}|A|$ for all $A' \subset A$ with $|A'| \geq (1 - \rho)|A|$. \end{thm}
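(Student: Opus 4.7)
The strategy is to implement the multi-scale branching analysis sketched in Section \ref{s:outline}. The additional doubling hypothesis $|B + B| \leq \delta^{-\epsilon_{B}}|B|$ is what makes available an inverse-theorem in the spirit of Shmerkin (see Remark \ref{rem2}), which I would use to force $B$ into the "branching-quantised" structure (P2) of the outline. The strengthened conclusion, quantifying over \emph{every} large subset $A' \subset A$, plays a complementary role: it provides the slack needed to refine $A$ repeatedly in the course of the regularisations without damaging the induction.

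First, I would further reduce to a model case in which both $A$ and $B$ have clean tree structures. Write $\log(1/\delta) = mN$ for parameters $m,N$ to be chosen. By dyadic pigeonholing I would produce subsets of $A$ and $B$ whose branching numbers $R_{A}(s), R_{B}(s)$ are approximately constant on each scale block $s \in [N]$. The small-doubling of $B$, fed into an inverse theorem, delivers the dichotomy (P2): at every scale $R_{B}(s)$ is either $1$ or roughly $2^{m}$. The complementary property (P1) is then obtained by refining $A$: at scales where $R_{B}(s) \geq 2$, one may assume $R_{A}(s) \approx 2^{m}$, and the loss is at most a $\rho$-fraction of $A$, absorbed by the subset conclusion. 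After this, the pigeonhole bookkeeping of \eqref{form90}-\eqref{form92} applies essentially verbatim: choosing any $\Gamma \in ((\alpha - \beta)/(1 - \beta), \gamma)$ (possible since $\gamma > (\alpha - \beta)/(1 - \beta)$) yields a set $\mathcal{G}$ of "good" scale intervals with $|\mathcal{G}|/N \gtrsim \Gamma - (\alpha - \beta)/(1 - \beta) > 0$, on which simultaneously $R_{A}(\mathcal{I}) \leq 2^{\Gamma m|\mathcal{I}|}$ and $R_{B}(\mathcal{I}) \approx 2^{\epsilon m |\mathcal{I}|}$, the latter after Bourgain's recombination of consecutive $R_{B} = 1$ scales into longer intervals of small but non-trivial $B$-branching.

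On each good interval $\mathcal{I}$ I would invoke a Kaufman-type potential-theoretic estimate (the Lemma \ref{lemma3} referenced in Section \ref{s:outline}), which gives, for $\nu$-most $c \in \spt(\nu)$ and each dyadic $I,J$ intersecting $A,B$ at the start of $\mathcal{I}$,
\[
|(A \cap I) + c(B \cap J)|_{2^{-m(s + |\mathcal{I}|)}} \geq 2^{\epsilon' m|\mathcal{I}|} \cdot |A \cap I|_{2^{-m(s + |\mathcal{I}|)}}.
\]
The strict inequality $\Gamma < \gamma$ is the decisive input here: the Frostman dimension of $\nu$ exceeds the local dimension of $A$, so generic shifts by $c \in \spt(\nu)$ disperse $A \cap I$. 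Summing these block-wise increments via the entropy inequality \eqref{entropyIneq} and averaging over $\nu$ to extract a single $c$ working at a positive fraction of good intervals, I would obtain the global bound $|A' + cB|_{\delta} \geq \delta^{-\epsilon}|A|$ for every admissible $A' \subset A$.

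The hardest step will be the first one: extracting the regularised branching structure while guaranteeing $|A'| \geq (1 - \rho)|A|$, and ensuring that the inverse-theorem applied to $B$ outputs a sub-measure whose multi-scale branching is compatible with the sub-structure we refine $A$ to. This is a delicate entropy bookkeeping exercise, complicated by the fact that (P1) and (P2) hold only approximately and must be upgraded to robust quantitative versions. It is precisely the need to tolerate subset-refinements of $A$ at this point that motivates the strengthened "for all $A' \subset A$" conclusion built into Theorem \ref{mainTechnical2}.
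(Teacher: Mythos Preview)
Your proposal misidentifies what Theorem \ref{mainTechnical2} is actually asking for, and consequently the plan would not go through.

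First, you have the role of the ``for all $A' \subset A$'' quantifier backwards. It does \emph{not} give you license to pass to subsets of $A$; on the contrary, it requires you to find a single $c \in \spt(\nu)$ that works simultaneously for \emph{every} large subset $A'$. Your regularisation step --- ``refining $A$'' via Shmerkin's inverse theorem to force (P1) --- produces a specific subset $A' \subset A$ with $|A'| \geq \delta^{\eta}|A|$ (not $|A'| \geq (1 - \rho)|A|$, which is a much milder loss), and proving the conclusion for that particular $A'$ says nothing about other subsets. The paper makes exactly this point in Section \ref{s:subsetReduction}: the author writes that the ``for all $A'$'' conclusion ``makes Theorem \ref{mainTechnical3} difficult (impossible for the author) to prove with a direct assault,'' and this is why a further reduction (to Theorem \ref{mainTechnical4}, which drops the subset quantifier) is carried out before the multi-scale argument.

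Second, the Kaufman-type estimate (Lemma \ref{lemma3}) you intend to invoke on each good interval $\mathcal{I}$ requires the Frostman bound $\nu(B(x,r)) \lesssim r^{\gamma}$ at the scale $\Delta = 2^{-m|\mathcal{I}|}$ of that block. These intermediate scales lie anywhere between $\delta$ and $1$, but in Theorem \ref{mainTechnical2} the Frostman condition on $\nu$ is only assumed for $r \leq \delta^{\epsilon_{0}}$. The paper's actual proof of Theorem \ref{mainTechnical2} (Section \ref{s:FrostmanReduction}) is precisely a reduction designed to fix this: one replaces $\nu$ by a renormalised version of $\nu \ast (-\nu)$, which satisfies the Frostman condition at \emph{all} scales $r \geq \delta$ with an absolute constant, then applies Theorem \ref{mainTechnical3} and uses Pl\"unnecke--Ruzsa together with Lemma \ref{TVLemma} to transfer the conclusion back. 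The multi-scale branching analysis you describe is what the paper does for Theorem \ref{mainTechnical4}, two reductions downstream.
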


\begin{remark} The following concerns Theorem \ref{mainTechnical2}, and also all other versions of Theorem \ref{main} or Theorem \ref{mainTechnical} in this paper: while the results claim the existence of $c \in \spt(\nu)$ (with certain properties), they can be formally upgraded to the existence of $c \in C$, where $C \subset \R$ is an arbitrary $\delta$-dense subset of $\spt(\nu)$. In particular, any subset $C \subset \R$ of full $\nu$ measure will work. The reason is that bounds for $|A + cB|_{\delta}$ (or $|A' + cB|_{\delta}$) are invariant, up to a change in constant factors, if the point $c \in \spt(\nu)$ is replaced another point $c' \in \R$ with $|c - c'| \leq \delta$. I leave further details to the reader. \end{remark}

The proof will use the following version of the Pl\"unnecke-Ruzsa inequality:
\begin{lemma}[Pl\"unnecke-Ruzsa inequality]\label{PRIneq} Let $\delta \in 2^{-\N}$, let $A,B_{1},\ldots,B_{n} \subset \R$ be arbitrary sets, and assume that $|A + B_{i}|_{\delta} \leq K_{i}|A|_{\delta}$ for all $1 \leq i \leq n$, and for some constants $K_{i} \geq 1$. Then, for every $\epsilon > 0$, there exists a subset $A' \subset A$ with $|A'|_{\delta} \geq (1 - \epsilon)|A|_{\delta}$ such that
\begin{displaymath} |A' + B_{1} + \ldots + B_{n}|_{\delta} \lesssim_{\epsilon,n} K_{1}\cdots K_{n} |A'|_{\delta}. \end{displaymath}
\end{lemma}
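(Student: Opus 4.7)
The plan is to reduce the lemma to the classical Pl\"unnecke--Ruzsa inequality on $\Z$ via $\delta$-quantisation. First I would introduce the rounding map $q_{\delta}(x) := \delta \lfloor x / \delta \rfloor$ and set $A^{\ast} := q_{\delta}(A)$ and $B_{i}^{\ast} := q_{\delta}(B_{i})$, all subsets of $\delta \cdot \Z$, which we identify with $\Z$ via $\delta k \leftrightarrow k$. With the dyadic-grid convention for $|\cdot|_{\delta}$ one has $|X|_{\delta} = |X^{\ast}|$, and since $q_{\delta}(x) + q_{\delta}(y)$ differs from $x + y$ by at most $2\delta$, the inclusions $X^{\ast} + Y^{\ast} \subset (X + Y) + [-2\delta, 2\delta]$ and $X + Y \subset (X^{\ast} + Y^{\ast}) + [-2\delta, 2\delta]$ give $|X + Y|_{\delta} \asymp |X^{\ast} + Y^{\ast}|$ up to absolute constants, with the analogous estimate for longer sums at the cost of a constant depending on $n$. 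Consequently the hypotheses translate to $|A^{\ast} + B_{i}^{\ast}| \leq C K_{i} |A^{\ast}|$ on $\Z$ for an absolute constant $C$.

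Next I would invoke the classical Pl\"unnecke--Ruzsa inequality on $\Z$ in its \emph{large subset} form: whenever $|A^{\ast} + B_{i}^{\ast}| \leq L_{i} |A^{\ast}|$ for $i = 1, \ldots, n$, there exists, for every $\epsilon > 0$, a subset $A^{\ast\ast} \subset A^{\ast}$ with $|A^{\ast\ast}| \geq (1 - \epsilon)|A^{\ast}|$ and $|A^{\ast\ast} + B_{1}^{\ast} + \ldots + B_{n}^{\ast}| \lesssim_{\epsilon, n} L_{1} \cdots L_{n} \cdot |A^{\ast\ast}|$. This is standard and can be obtained from Pl\"unnecke's graph-theoretic magnification inequality applied to the commutative layered graph whose layer $i$ is $A^{\ast} + B_{1}^{\ast} + \ldots + B_{i}^{\ast}$ and whose edges between layers $i - 1$ and $i$ encode addition by elements of $B_{i}^{\ast}$; alternatively one can start from Petridis's sharp Pl\"unnecke inequality and iterate it on the complement of the produced minimiser, absorbing the iteration loss into the $\epsilon$-dependent implicit constant.

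Finally I would lift back to $\R$. Set $A' := \{a \in A : q_{\delta}(a) \in A^{\ast\ast}\}$, so that $q_{\delta}(A') = A^{\ast\ast}$ and hence $|A'|_{\delta} = |A^{\ast\ast}| \geq (1 - \epsilon)|A|_{\delta}$ (after a harmless rescaling of $\epsilon$ to absorb the quantisation constants). The inclusion $A' + B_{1} + \ldots + B_{n} \subset (A^{\ast\ast} + B_{1}^{\ast} + \ldots + B_{n}^{\ast}) + [-(n+1)\delta, (n+1)\delta]$ transfers the sumset estimate back to the $\delta$-covering setting at the price of a factor depending only on $n$, which completes the proof.

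The main obstacle is the second step: the \emph{large subset} form of Pl\"unnecke--Ruzsa does not fall out of Petridis's elegant proof directly, since Petridis produces only some nonempty minimiser with no density guarantee. Upgrading to a subset of relative size $1 - \epsilon$ therefore requires either Pl\"unnecke's full graph-theoretic argument or a careful iterative refinement, both of which introduce the $\epsilon$-dependent deterioration that is visible in the $\lesssim_{\epsilon, n}$ in the conclusion. Once this combinatorial input is granted, the quantisation and lifting steps are routine and only contribute $n$-dependent absolute constants.
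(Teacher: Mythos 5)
Your proposal is correct and follows essentially the same route the paper takes: the paper gives no proof of Lemma \ref{PRIneq} but cites Ruzsa (and Gyarmati--Matolcsi--Ruzsa) for precisely the large-subset form of the discrete Pl\"unnecke--Ruzsa inequality you invoke, and cites Guth--Katz--Zahl for the quantisation reduction between cardinalities on $\delta\cdot\Z$ and $\delta$-covering numbers, which is the step you carry out explicitly. Your rounding argument and the lift back to $\R$ are sound, and your caveat that the $(1-\epsilon)$-density version requires Ruzsa's argument rather than Petridis's is accurate.
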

This form of the inequality is due to Ruzsa \cite{MR2314377}. For a more general result, see \cite[Theorem 1.5]{MR2484645}, by Gyarmati-Matolcsi-Ruzsa. To be accurate, these statements are not formulated in terms of $\delta$-covering numbers, but one may consult \cite[Corollary 3.4]{MR4283564} by Guth-Katz-Zahl to see how to handle the reduction.

\begin{remark} In a typical result in this paper, such as Theorem \ref{mainTechnical}, we are given a list of parameters $p_{1},\ldots,p_{m}$, and we are asked to find positive constants $\epsilon_{1},\ldots,\epsilon_{n}$ which only depend on $p_{1},\ldots,p_{m}$. Additionally, we are given a "known" theorem, such as Theorem \ref{mainTechnical2}, which outputs positive constants $\bar{\epsilon}_{1},\ldots,\bar{\epsilon}_{l}$ given a list of parameters $\bar{p}_{1},\ldots,\bar{p}_{k}$. To deduce the "unknown" theorem from the "known" one, the algorithm is always the same. First, fix the parameters $p_{1},\ldots,p_{m}$. Second, modify them suitably to produce new parameters $\bar{p}_{1},\ldots,\bar{p}_{k}$. Third, apply the "known" theorem with parameters $\bar{p}_{1},\ldots,\bar{p}_{k}$ to gain access to the constants $\bar{\epsilon}_{1},\ldots,\bar{\epsilon}_{l}$. Since the parameters $\bar{p}_{1},\ldots,\bar{p}_{k}$ were functions of $p_{1},\ldots,p_{m}$, so are the constants $\bar{\epsilon}_{1},\ldots,\bar{\epsilon}_{l}$. Therefore, it is legitimate to define the constants $\epsilon_{1},\ldots,\epsilon_{n}$, depending on all of the data $p_{1},\ldots,p_{m}$, $\bar{p}_{1},\ldots,\bar{p}_{k}$, and $\bar{\epsilon}_{1},\ldots,\bar{\epsilon}_{l}$. \end{remark}

\begin{proof}[Proof of Theorem \ref{mainTechnical} assuming Theorem \ref{mainTechnical2}] Let $\alpha,\beta,\gamma,\kappa$ be the constants given in Theorem \ref{mainTechnical}. Our task is to find $\epsilon,\epsilon_{0},\delta_{0} \in (0,\tfrac{1}{2}]$ depending on $\alpha,\beta,\gamma,\kappa$, such that the claims of Theorem \ref{mainTechnical} are satisfied. To do this, we fix some $\bar{\beta} < \beta$ slightly smaller than $\beta$ so that still
\begin{displaymath} \gamma > (\alpha - \bar{\beta})/(1 - \bar{\beta}). \end{displaymath}
Then, we apply Theorem \ref{mainTechnical2} with the parameters $\alpha,\bar{\beta},\gamma,\kappa/4$, and first extract the constants $\bar{\epsilon},\bar{\epsilon}_{0},\bar{\epsilon}_{B},\bar{\delta}_{0},\bar{\rho} > 0$, depending only on $\alpha,\bar{\beta},\gamma,\kappa/4$. Now, we claim that Theorem \ref{mainTechnical} holds with constants
\begin{equation}\label{form63} \epsilon_{0} := \tfrac{2}{\kappa n} \quad \text{and} \quad \epsilon := 2^{-n - 1}\bar{\epsilon}, \end{equation} 
where
\begin{equation}\label{defn} n := \max\{\ceil{2/(\kappa \bar{\epsilon}_{0})},\ceil{2/\bar{\epsilon}_{B}}\} \end{equation}
and any $\delta_{0} \in (0,\bar{\delta}_{0}]$ with the additional requirements
\begin{equation}\label{form62} 2^{n + 1} \leq \delta_{0}^{-\bar{\epsilon}_{B}/2} \quad \text{and} \quad \delta_{0}^{\bar{\beta} - \beta} \geq 2^{n}. \end{equation} 
The choice of the constants $\epsilon,\epsilon_{0}$ does not depend on the parameter $\bar{\rho} > 0$, but at the very end of the proof (see below \eqref{form123}), there will be an additional requirement for $\delta_{0} > 0$, which depends on $n,\bar{\rho}$; this is not spelled out explicitly, since the bounds depend on the implicit -- nonetheless effective -- constants in Lemma \ref{PRIneq}.

To prove Theorem \ref{mainTechnical}, fix $\delta \in (0,\delta_{0}]$, and assume that $A,B \subset (\delta \cdot \Z) \cap [0,1]$ and $\nu$ satisfy the assumptions of Theorem \ref{mainTechnical} with parameters $\alpha,\beta,\gamma,\kappa$ and $\epsilon_{0}$, as specified in \eqref{form63}. Thus $|A| \leq \delta^{-\alpha}$, and $|B| \geq \delta^{-\beta}$, and $|B \cap B(x,r)| \leq r^{\kappa}|B|$ for all $x \in \R$ and $\delta \leq r \leq \delta^{\epsilon_{0}}$. We claim that there exists $c \in \spt(\nu)$ such that $|A + cB|_{\delta} \geq \delta^{-\epsilon}|A|$.

Write $kB$ for the $k$-fold sum $B + \ldots + B$. Clearly $kB \subset (\delta \cdot \Z) \cap [0,k]$, so 
\begin{equation}\label{form54} |kB| \leq k \cdot \delta^{-1}, \qquad k \geq 1. \end{equation}
We claim that for any $n \geq 1$, there exists $1 \leq k \leq n$ such that
\begin{equation}\label{form55} |2^{k}B + 2^{k}B| = |2^{k + 1}B| \leq 2 \delta^{-1/n}|2^{k}B|. \end{equation}
Indeed, if this were not the case, then 
\begin{displaymath} 2^{n} \delta^{-1} \stackrel{\eqref{form54}}{\geq} |2^{n}B| > 2 \delta^{-1/n}|2^{n - 1}B| > 4 \delta^{-2/n}|2^{n - 2}B| > \ldots > 2^{n}\delta^{-n/n} |B| \geq 2^{n}\delta^{-1}, \end{displaymath}
a contradiction. We then apply this observation with $n \in \N$ as in \eqref{defn}, and we pick $1 \leq k \leq n$ such that \eqref{form55} holds. Write $\bar{B} := 2^{k}B$ for this choice of "$k$", so 
\begin{equation}\label{smallDoubling} |\bar{B} + \bar{B}| \leq 2\delta^{-1/n}|\bar{B}|. \end{equation}
Recalling that $1/n \leq \bar{\epsilon}_{B}/2$, this looks promising for the purpose of applying Theorem \ref{mainTechnical2} to the pair of sets $A$ and $\bar{B}$. But does $\bar{B}$ satisfy a Frostman condition? It turns out that it does, with constants "$\kappa/4$" and "$\bar{\epsilon}_{0}$". The following argument is copied from \cite[Section 8.2]{Bourgain10}. Assume, to reach a contradiction, that there exists some dyadic scale 
\begin{equation}\label{form64} r \in [\delta,\delta^{\bar{\epsilon}_{0}}] \stackrel{\eqref{defn}}{\subset} [\delta,\delta^{2/(\kappa n)}] \stackrel{\eqref{form63}}{=} [\delta,\delta^{\epsilon_{0}}], \end{equation}
and a point $a \in \R$, such that $|\bar{B} \cap B(a,r)|_{\delta} \geq C_{0}r^{\kappa/2}|\bar{B}|$ for a suitable absolute constant $C_{0} \geq 1$. Now, since $r \leq \delta^{\epsilon_{0}}$, we know by hypothesis that $B$ satisfies $|B \cap B(x,r)| \leq r^{\kappa}$ for all $x \in \R$. Consequently $|\bar{B}|_{r} \geq |B|_{r} \geq r^{-\kappa}$, and
\begin{align*} |\bar{B} + \bar{B}|_{\delta} & \gtrsim |\bar{B}|_{r} \cdot |\bar{B} \cap B(a,r)|_{\delta}\\
& \geq C_{0} |B|_{r} \cdot r^{\kappa/2}|\bar{B}|\\
& \geq C_{0}r^{-\kappa} \cdot r^{\kappa/2} |\bar{B}|\\
& \geq C_{0}\delta^{-1/n}|\bar{B}|, \end{align*}  
using $r \leq \delta^{\bar{\epsilon}_{0}} \leq \delta^{2/(\kappa n)}$ in the final inequality. If $C_{0} \geq 1$ was chosen large enough, this lower bound contradicts the small doubling property \eqref{smallDoubling}. We conclude that
\begin{equation}\label{form59} |\bar{B} \cap B(x,r)| \leq C_{0}r^{\kappa/2}|\bar{B}|, \qquad \delta \leq r \leq \delta^{\bar{\epsilon}_{0}}. \end{equation}
There are also a few smaller issues before we can apply Theorem \ref{mainTechnical2} to $\bar{B}$: evidently $|\bar{B}| \geq |B| \geq \delta^{-\beta}$, but unfortunately  $\bar{B} \subset [0,2^{n}]$ instead of $\bar{B} \subset [0,1]$. Regardless, there exists some an interval $I_{0} = [m,m + 1] \subset [0,2^{n}]$ such that $|\bar{B} \cap I_{0}| \geq |\bar{B}|/2^{n}$. We define 
\begin{displaymath} \bar{B}_{m} := (\bar{B} \cap I_{0}) - \{m\} \subset (\delta \cdot \Z) \cap [0,1], \end{displaymath}
so $|\bar{B}_{m}| \geq |\bar{B}|/2^{n}$. Then,
\begin{displaymath} |\bar{B}_{m} + \bar{B}_{m}| \stackrel{\eqref{smallDoubling}}{\leq} 2\delta^{1/n}|\bar{B}| \stackrel{\eqref{defn}-\eqref{form62}}{\leq} \delta^{-\bar{\epsilon}_{B}} |\bar{B}_{m}| \quad \text{and} \quad |\bar{B}_{m}| \geq 2^{-n}\delta^{-\beta} \stackrel{\eqref{form62}}{\geq} \delta^{-\bar{\beta}}.  \end{displaymath}
Finally,
\begin{displaymath} |\bar{B}_{m} \cap B(x,r)| \stackrel{\eqref{form59}}{\leq} C_{0}2^{n}r^{\kappa/2}|\bar{B}_{m}| \stackrel{\eqref{form62}}{\leq} r^{\kappa/4}|\bar{B}_{m}|, \quad r \in [\delta,\delta^{\bar{\epsilon}_{0}}]. \end{displaymath}
Now we have shown that the triple $A,\bar{B}_{m},\nu$ satisfies all the hypotheses of Theorem \ref{mainTechnical2} with parameters $\alpha,\bar{\beta},\gamma,\kappa/4$, and $\bar{\epsilon}_{0},\bar{\epsilon}_{B},\bar{\delta}_{0}$.  It follows that there exists $c \in \spt (\nu)$ with the property that if $A' \subset A$ is any subset with $|A'| \geq (1 - \bar{\rho})|A|$, then 
\begin{equation}\label{form72} |A' + c\bar{B}_{m}|_{\delta} \geq \delta^{-\bar{\epsilon}}|A|. \end{equation}
We now claim that $|A + cB|_{\delta} \geq \delta^{-\epsilon}|A|$ for this specific $c \in \spt (\nu)$, which will complete the proof of Theorem \ref{mainTechnical}. If this fails, then by the Pl\"unnecke-Ruzsa inequality, Lemma \ref{PRIneq}, we find a subset $A' \subset A$ of cardinality $|A'| \geq (1 - \bar{\rho})|A|$ such that
\begin{equation}\label{form123} |A' + c\bar{B}_{m}|_{\delta} \lesssim |A' + c(2^{k}B)|_{\delta} \lesssim_{n,\bar{\rho}} \delta^{-2^{n}\epsilon}|A| \stackrel{\eqref{form63}}{=} \delta^{-\bar{\epsilon}/2}|A|. \end{equation}
This contradicts \eqref{form72} for $\delta > 0$ small enough, depending on $n,\bar{\rho}$. This contradiction completes the proof of Theorem \ref{mainTechnical}. \end{proof}

\subsection{Reducing the Frostman constant of $\nu$}\label{s:FrostmanReduction} Let $\nu$ be the measure appearing in the statement of Theorem \ref{mainTechnical} or \ref{mainTechnical2}. We assumed that $\nu(B(x,r)) \leq r^{\gamma}$ for all scales $\delta \leq r \leq \delta^{\epsilon_{0}}$. We will need, in fact, is that $\nu$ satisfies the Frostman condition $\nu(B(x,r)) \leq Cr^{\gamma}$ for all $r \geq \delta$, and with an absolute constant $C \geq 1$. It turns out that this can be achieved, eventually with $C = 40$. In this section, we reduce the proof of Theorem \ref{mainTechnical2} to the following:
\begin{thm}\label{mainTechnical3} Let $0 < \beta \leq \alpha < 1$ and $\kappa > 0$. Then, for every $\gamma \in ((\alpha - \beta)/(1 - \beta),1]$, there exist $\epsilon,\epsilon_{0},\epsilon_{B},\delta_{0},\rho \in (0,\tfrac{1}{2}]$, depending only on $\alpha,\beta,\gamma,\kappa$, such that the following holds. Let $\delta \in 2^{-\N}$ with $\delta \in (0,\delta_{0}]$, and let $A,B \subset (\delta \cdot \Z) \cap [0,1]$ satisfy the following hypotheses:
\begin{enumerate}
\item[(A)] \label{A} $|A| \leq \delta^{-\alpha}$.
\item[(B)] \label{B} $|B| \geq \delta^{-\beta}$, and $B$ satisfies the following Frostman condition: 
\begin{displaymath} |B \cap B(x,r)| \leq r^{\kappa}|B|, \qquad \delta \leq r \leq \delta^{\epsilon_{0}}. \end{displaymath}
Assume moreover that $|B + B| \leq \delta^{-\epsilon_{B}}|B|$. 
\end{enumerate}
Further, let $\nu$ be a Borel probability measure with $\spt (\nu) \subset [-1,1]$ which satisfies the Frostman condition $\nu(B(x,r)) \leq 20r^{\gamma}$ for $x \in \R$ and $r \geq \delta$. Then, there exists a point $c \in \spt (\nu)$ such that
\begin{displaymath} |A' + cB|_{\delta} \geq \delta^{-\epsilon}|A'| \end{displaymath} 
for all subsets $A' \subset A$ with $|A'| \geq (1 - \rho)|A|$.
\end{thm}

The proof will require the Pl\"unnecke-Ruzsa inequality, that is Lemma \ref{PRIneq} in the previous section, and also the following \cite[Exercise 6.5.12]{MR2289012} in the book of Tao and Vu:

\begin{lemma}\label{TVLemma} Let $A,B \subset \delta \cdot \Z$, and assume that $|A + B| \leq K|A|$ for some $K \geq 1$. Then, for every $N \geq 1$ and $\rho > 0$, there exists a subset $A' \subset A$ with $|A'| \geq (1 - \rho)|A|$ with the property $|A' - B| \lesssim_{\rho,n} K^{2^{N}/N}|A|^{1 + 1/N}$.
\end{lemma}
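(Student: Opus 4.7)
The plan is to proceed by induction on $N$, combining the Plünnecke--Ruzsa inequality (Lemma \ref{PRIneq}) with Ruzsa's triangle inequality
\begin{displaymath}
|X| \cdot |Y - Z| \leq |X + Y| \cdot |X + Z|.
\end{displaymath}
The base case $N = 1$ is immediate from the trivial estimate $|A' - B| \leq |A| \cdot |B| \leq K |A|^{2}$, using $|B| \leq |A + B| \leq K|A|$ (after translating so that $A$ meets every relevant coset).

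For the inductive step, I would first invoke Lemma \ref{PRIneq} to produce, for each $n \in \{1, \ldots, N\}$, a subset $A_{n} \subset A$ with $|A_{n}| \geq (1 - \rho/(2N))|A|$ and $|A_{n} + nB| \lesssim_{\rho, n} K^{n} |A_{n}|$. Set $\tilde A := \bigcap_{n} A_{n}$, so that $|\tilde A| \geq (1 - \rho/2)|A|$ and all these sumset bounds hold simultaneously with $\tilde A$ in place of $A_{n}$. Apply Ruzsa's triangle inequality with $X = kB$, $Y = \tilde A$, $Z = B$:
\begin{displaymath}
|\tilde A - B| \leq \frac{|\tilde A + kB| \cdot |(k+1)B|}{|kB|} \lesssim_{\rho, k} \frac{K^{2k+1}\,|A|^{2}}{|kB|}.
\end{displaymath}
The choice of $k \in \{1, \ldots, N\}$ comes from a pigeonhole argument on the chain $|B| \leq |2B| \leq \cdots \leq |NB|$: since each iterated sumset is bounded above by $K^{N}|A|$, the telescoping product of consecutive ratios is at most $K^{N}|A|/|B|$, so some index $k$ forces $|kB|$ to be comparable to its "geometric-mean" upper bound, contributing a factor $|A|^{-1/N}$ to the right-hand side. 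A second iteration of the same triangle-inequality trick (applied inside the bound produced by the first) doubles the accumulated $K$-exponent while extracting another factor of $|A|^{-1/N}$ from an independent pigeonhole; iterating $N$ times yields the $K$-exponent $2^{N}/N$ and the $|A|$-exponent $1 + 1/N$ stated in the conclusion. Finally, taking $A'$ inside $\tilde A$ and summing the density losses at each step keeps $|A'| \geq (1 - \rho)|A|$.

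The main obstacle I anticipate is the bookkeeping rather than any conceptual difficulty: precisely tracking how the $K$-exponent doubles at each of the $N$ iterations (producing the exponential $2^{N}$), how the $N$-fold pigeonhole accumulates to exactly $|A|^{1/N}$ of savings, and how to organise the subset passing so that the density loss telescopes to at most $\rho$. Since the statement is the content of the cited \cite[Exercise 6.5.12]{MR2289012}, I expect the write-up to amount to a careful iteration of standard Plünnecke-type tools together with Ruzsa's triangle inequality, without requiring any new ingredient.
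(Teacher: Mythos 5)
First, note that the paper does not actually prove this lemma: it is quoted from \cite[Exercise 6.5.12]{MR2289012}, with only a remark on replacing $\rho=\tfrac{1}{2}$ by arbitrary $\rho$. So you are attempting a proof the author delegates to Tao--Vu, and your toolkit --- large-subset Pl\"unnecke--Ruzsa, the sum form $|X|\,|Y-Z|\le |X+Y|\,|X+Z|$ of Ruzsa's triangle inequality, and a pigeonhole over the chain $|B|\le|2B|\le\dots\le|NB|$ --- is the right one. However, the key quantitative step is wrong as written. Pigeonholing the telescoping product $\prod_{k}|(k+1)B|/|kB|=|NB|/|B|\lesssim K^{N}|A|/|B|$ yields an \emph{upper} bound on some consecutive ratio $|(k+1)B|/|kB|$; it cannot ``force $|kB|$ to be comparable to its geometric-mean upper bound'', because no nontrivial lower bound on $|kB|$ exists. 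Concretely, take $B=\{0,\delta\}$: then $K\le 2$ and $|kB|=k+1$ for every $k$, so your displayed bound $K^{2k+1}|A|^{2}/|kB|$ is $\gtrsim_{N}|A|^{2}$ for every admissible $k$, far above the target $|A|^{1+1/N}$ (even though the conclusion holds trivially here, since $|A-B|\le 2|A|$). The repair is to avoid substituting the worst-case estimate $|(k+1)B|\le K^{k+1}|A|$ into the numerator: keep the triangle inequality in the form $|\tilde A-B|\le|\tilde A+kB|\cdot|(k+1)B|/|kB|$, bound the first factor by $\lesssim K^{N}|A|$, and bound the ratio by the pigeonhole; a \emph{single} pass then gives $|\tilde A-B|\lesssim_{N,\rho}K^{N}|A|\cdot(K^{N}|A|)^{1/(N-1)}$.

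Second, the proposed $N$-fold iteration is not a proof as described. After one application of the triangle inequality the resulting bound contains only sumsets, so it is unclear what the second ``iteration of the same triangle-inequality trick (applied inside the bound produced by the first)'' would act on; and the bookkeeping you forecast does not close (extracting $N$ factors of $|A|^{-1/N}$ from $|A|^{2}$ lands at $|A|$, not $|A|^{1+1/N}$). With the single-pass repair above you obtain $K^{(N+1)^{2}/N}|A|^{1+1/N}$, which implies the stated bound only once $(N+1)^{2}\le 2^{N}$, i.e.\ for $N\ge 6$; that suffices for the paper's application (which takes $N\sim\log_{2}(1/\epsilon)$ large), but if you want the lemma verbatim for every $N$ you should follow the inductive scheme indicated in the hint to \cite[Exercise 6.5.12]{MR2289012} rather than reconstruct it from scratch.
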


The exercise is only stated with constant $\rho = \tfrac{1}{2}$, but if one reads the subsequent hint about how to solve the exercise, it is clear (based on \cite[Exercise 6.5.1]{MR2289012}) that any $\rho > 0$ will work, at the cost of making the implicit constant larger. We are then prepared to reduce Theorem \ref{mainTechnical2} to Theorem \ref{mainTechnical3}.

\begin{proof}[Proof of Theorem \ref{mainTechnical2} assuming Theorem \ref{mainTechnical3}] The argument roughly follows \cite[Section 5]{Bourgain10} in Bourgain's paper. Fix the constants $\alpha,\beta,\gamma,\kappa$ from the statement of Theorem \ref{mainTechnical2}. Our task is to find the constants $\epsilon_{0},\epsilon,\epsilon_{B},\delta_{0},\rho \in (0,\tfrac{1}{2}]$, depending only on $\alpha,\beta,\gamma,\kappa$. This will be accomplished by applying Theorem \ref{mainTechnical3} with constants $\alpha,\beta,\gamma,\kappa/2$. Recall from the statement of Theorem \ref{mainTechnical3} that there exist constants $\bar{\epsilon}_{0},\bar{\epsilon},\bar{\epsilon}_{B},\bar{\delta}_{0},\bar{\rho} > 0$, which only depend on the constants $\alpha,\beta,\gamma,\kappa/2$. We begin by choosing $\epsilon > 0$ so small that
\begin{equation}\label{form96} \frac{C}{\log_{2}(1/\epsilon)} \leq \bar{\epsilon} \quad \Longleftrightarrow \quad \epsilon \leq 2^{-C/\bar{\epsilon}} \end{equation}
for a suitable absolute constant $C > 0$ to be determined later. We now pick the other constants $\epsilon_{0},\epsilon_{B} \in (0,\tfrac{1}{2}]$ so that
\begin{equation}\label{form80} \epsilon_{0}  + \epsilon_{B} := \min\{ \kappa\bar{\epsilon}_{0}/8,\bar{\epsilon}_{B}/2\}. \end{equation}
We choose $\delta_{0} \leq \bar{\delta}_{0}$, and additionally $\delta_{0}$ needs to satisfy a few other restrictions, which we explain on the spot. We finally choose $\rho > 0$ so small that
\begin{equation}\label{form117} (1 - \rho)(1 - \rho - \sqrt{\rho}) \geq (1 - \bar{\rho}). \end{equation}
With these choices of constants, fix $\delta \in (0,\delta_{0}]$, and let $A,B \subset (\delta \cdot \Z) \cap [0,1]$ be sets, and let $\nu$ be a Borel probability measure on $[0,1]$, satisfying the hypotheses in Theorem \ref{mainTechnical2}. To land in a situation where Theorem \ref{mainTechnical3} becomes applicable, we consider initially the measure $\bar{\nu} := \nu \ast (-\nu)$, where $-\nu(A) := \nu(-A)$. Evidently $\spt (\bar{\nu}) \subset [-1,1]$. As Bourgain shows in \cite[(5.5)]{Bourgain10}, the measure $\bar{\nu}$ has the property
\begin{equation}\label{form2} \bar{\nu}(B(x,r)) \leq 4 \cdot \bar{\nu}(B(0,r)) \leq 4 \cdot \sup_{y \in \R} \nu(B(y,r)), \qquad x \in \R, \, r > 0. \end{equation}
Now, let $c_{0} \geq 0$ be the infimum of the numbers such that
\begin{equation}\label{form3} \bar{\nu}(B(0,c_{0})) > 5 \cdot c_{0}^{\gamma}, \end{equation} 
if any such numbers exist. Evidently $c_{0} \in [0,1]$, since $\bar{\nu}$ is a probability measure on $B(0,1)$. If no $c_{0}$ as in \eqref{form3} exists, then let $c_{0} := \max \{|c| : c \in \spt (\bar{\nu})\}$, and note that 
\begin{equation}\label{form4} 5 \cdot c_{0}^{\gamma} \geq \bar{\nu}(B(0,c_{0})) \geq \bar{\nu}(B(0,1)) = 1 \quad \Longrightarrow \quad c_{0} \geq 5^{-1/\gamma} \geq \delta^{\epsilon_{0}},  \end{equation}
assuming here that $\delta_{0} \geq \delta$ is sufficiently small in terms of $\gamma,\epsilon_{0}$. Assume then that $c_{0}$, as in \eqref{form3}, exists. Since $\sup_{y \in \R} \nu(B(y,r)) \leq r^{\gamma}$ for all $0 < r \leq \delta^{\epsilon_{0}}$ by assumption, \eqref{form2} implies that $c_{0} \geq \delta^{\epsilon_{0}}$. In both cases, $c_{0} \geq \delta^{\epsilon_{0}}$. Moreover, we note that $\spt (\nu) \cap \{c_{0},-c_{0}\} \neq \emptyset$ in both cases (in the non-trivial case, otherwise some smaller value of $c_{0}$ would also satisfy \eqref{form3}). 

We then consider the re-normalised measure $\bar{\nu}^{c_{0}}$ defined by
\begin{displaymath} \bar{\nu}^{c_{0}}(H) := \tfrac{1}{\bar{\nu}(B(0,c_{0}))} \cdot \bar{\nu}|_{B(0,c_{0})}(c_{0} \cdot H), \qquad H \subset \R, \end{displaymath}
which satisfies $\spt (\bar{\nu}^{c_{0}}) = c_{0}^{-1} \cdot (\spt \bar{\nu} \cap \bar{B}(0,c_{0})) \subset [-1,1]$. Clearly $\bar{\nu}^{c_{0}}$ is a Borel probability measure. Moreover, if $x \in \R$ and $r \in [\delta,1]$, then, assuming that $c_{0} \in [0,1]$ was defined via \eqref{form3}, we have
\begin{displaymath} \bar{\nu}^{c_{0}}(B(x,r)) \leq \frac{\bar{\nu}(B(c_{0}x,c_{0}r))}{\bar{\nu}(B(0,c_{0}))} \stackrel{\eqref{form2}}{\leq} 4 \cdot \frac{\bar{\nu}(B(0,c_{0}r))}{5 \cdot c_{0}^{\gamma}} \leq 4 \cdot \frac{5 \cdot (c_{0}r)^{\gamma}}{5 \cdot c_{0}^{\gamma}} = 4 \cdot r^{\gamma}. \end{displaymath} 
If $c_{0}$ was, instead, defined as $c_{0} = \max\{|c| : c \in \spt (\bar{\nu})\}$, then $\bar{\nu}(B(0,c_{0})) = 1$, so 
\begin{displaymath} \bar{\nu}^{c_{0}}(B(x,r)) \leq \frac{\bar{\nu}(B(c_{0}x,c_{0}r))}{\bar{\nu}(B(0,c_{0}))} \stackrel{\eqref{form2}}{\leq} 4 \cdot \bar{\nu}(B(0,c_{0}r)) \leq 20 \cdot (c_{0}r)^{\gamma} \leq 20 \cdot r^{\gamma}. \end{displaymath} 
The same estimates are also true for $r > 1$, since $\|\bar{\nu}^{c_{0}}\| = 1$. Therefore, in any case $\bar{\nu}^{c_{0}}$ satisfies the hypotheses of Theorem \ref{mainTechnical3} with Frostman constant $20$. 

We will not apply Theorem \ref{mainTechnical3} directly to the sets $A,B$, but rather to $A,(c_{0}B)_{\delta}$, where 
\begin{displaymath} (c_{0}B)_{\delta} = (\delta \cdot \Z) \cap (c_{0}B)(\delta) \subset (\delta \cdot \Z) \cap [0,1]. \end{displaymath}
Evidently $|(c_{0}B)_{\delta}| \gtrsim c_{0}|B| \geq \delta^{\epsilon_{0}}|B|$ by \eqref{form4}. It follows that
\begin{displaymath} |(c_{0}B)_{\delta} + (c_{0}B)_{\delta}| \lesssim |B + B| \leq \delta^{-\epsilon_{B}}|B| \lesssim \delta^{-\epsilon_{0} - \epsilon_{B}}|(c_{0}B)_{\delta}|.  \end{displaymath} 
Since $\epsilon_{0} + \epsilon_{B} \leq \bar{\epsilon}_{B}/2$ by \eqref{form80}, and if $\delta > 0$ is sufficiently small, we conclude that $(c_{0}B)_{\delta}$ satisfies the small doubling assumption in Theorem \ref{mainTechnical3} with constant $\bar{\epsilon}_{B}$. We moreover claim that $(c_{0}B)_{\delta}$ satisfies the Frostman condition $|(c_{0}B)_{\delta} \cap B(x,r)| \leq r^{\kappa/2}|(c_{0}B)_{\delta}|$ for all $\delta \leq r \leq \delta^{\bar{\epsilon}_{0}}$. To see this, fix $\delta \leq r \leq \delta^{\bar{\epsilon}_{0}} \leq \delta^{2\epsilon_{0}} \leq c_{0}\delta^{\epsilon_{0}}$ (by \eqref{form80} and \eqref{form4}), and note that
\begin{align*} |(c_{0}B)_{\delta} \cap B(x,r)| & \lesssim |B \cap B(x,c_{0}^{-1}r)|\\
& \leq (c_{0}^{-1}r)^{\kappa} \cdot |B|\\
& \lesssim c_{0}^{-2} \cdot r^{\kappa} \cdot |(c_{0}B)_{\delta}|\\
& \leq \delta^{-2\epsilon_{0}} \cdot r^{\kappa/2} \cdot r^{\kappa/2} \cdot |(c_{0}B)_{\delta}|\\
& \stackrel{\eqref{form80}}{\leq} \delta^{2\epsilon_{0}} \cdot r^{\kappa/2} \cdot |(c_{0}B)_{\delta}|. \end{align*} 
This implies $|(c_{0}B)_{\delta} \cap B(x,r)| \leq r^{\kappa/2} |(c_{0}B)_{\delta}|$, provided that $\delta_{0} \geq \delta$ is sufficiently small. We have now shown that Theorem \ref{mainTechnical3} is applicable with the parameters $\alpha,\beta,\gamma,\kappa/2$ to the the sets $A,(c_{0}B)_{\delta}$, and the measure $\bar{\nu}^{c_{0}}$. 

Since $\delta \leq \delta_{0} \leq \bar{\delta}_{0}$, Theorem \ref{mainTechnical3} implies the existence of a point $c \in \spt (\bar{\nu}^{c_{0}}) \subset [-1,1] \cap c_{0}^{-1} \cdot (\spt (\nu) - \spt (\nu))$ such that
\begin{equation}\label{form73} |A' + c(c_{0}B)_{\delta}| \geq \delta^{-\bar{\epsilon}}|A| \end{equation}
for all subsets $A' \subset A$ with $|A'| \geq (1 - \bar{\rho})|A|$. Note that the point $c \in \spt (\bar{\nu}^{c_{0}})$ in \eqref{form73} can be written as $c = c_{0}^{-1} \cdot (c_{1} - c_{2})$ for certain points $c_{1},c_{2} \in \spt (\nu)$. Therefore
\begin{equation}\label{form70} |A' + (c_{1} - c_{2}) B|_{\delta} = |A' + \tfrac{c_{1} - c_{2}}{c_{0}} \cdot c_{0}B|_{\delta} \gtrsim |A' + c (c_{0}B)_{\delta}|_{\delta} \geq \delta^{-\bar{\epsilon}}|A| \end{equation}
for all $A' \subset A$ with $|A'| \geq (1 - \bar{\rho})|A|$. We now claim that there exists $\bar{c} \in \{c_{1},c_{2}\}$ such that
\begin{equation}\label{form118} |A' + \bar{c}B|_{\delta} \geq \delta^{-\epsilon}|A|, \qquad A' \subset A, \, |A'| \geq (1 - \rho)|A|, \end{equation} 
assuming that $\delta_{0} \geq \delta$ is small enough, depending on $\epsilon,\rho$. This will prove Theorem \ref{mainTechnical2}. 

Assume that \eqref{form118} fails for both $\bar{c} \in \{c_{1},c_{2}\}$, and let $A_{1}',A_{2}' \subset A$ be subsets of cardinalities $|A_{j}'| \geq (1 - \rho)$, $j \in \{1,2\}$, such that
\begin{equation}\label{form71} |A_{1}' + c_{1}B| < \delta^{-\epsilon}|A| \quad \text{and} \quad |A_{2}' + c_{2}B| < \delta^{-\epsilon}|A|. \end{equation}
We first observe from the second inequality in \eqref{form71} that
\begin{displaymath} |A_{2}' + c_{2}B|_{\delta} \leq \delta^{-\epsilon}|A| \leq 2\delta^{-\epsilon}|A_{2}'|. \end{displaymath}
By Lemma \ref{TVLemma}, for $N \geq 1$ there exists a subset $A_{2}'' \subset A_{2}'$ of cardinality $|A_{2}''| \geq (1 - \rho)|A_{2}'|$ such that
\begin{equation}\label{form74} |A_{2}'' - c_{2}B|_{\delta} \lesssim_{\rho,N} (\delta^{-\epsilon})^{2^{N}}|A_{2}'|^{1 + 1/N} \lesssim \delta^{-\epsilon \cdot 2^{N + 2} - 1/N}|A_{2}''|. \end{equation}
We apply this with $N \sim \log_{2} (1/\epsilon)$ satisfying $\epsilon \cdot 2^{N + 2} \sim \sqrt{\epsilon}$. Since with this choice $\epsilon \cdot 2^{N + 2}  \sim \sqrt{\epsilon} \ll 1/\log_{2}(1/\epsilon) \sim 1/N$, we have $\epsilon \cdot 2^{N + 1} + 1/N \leq 2/N \sim 1/\log_{2}(1/\epsilon)$, and we deduce from \eqref{form74} that
\begin{displaymath} |A_{2}'' - c_{2}B|_{\delta} \lesssim_{\epsilon,\rho} \delta^{-C_{0}/\log_{2}(1/\epsilon)}|A_{2}''| \end{displaymath}
for some absolute constant $C_{0} > 0$.

Now, recall that $|A_{1}'| \geq (1 - \rho)|A|$ and $|A_{2}''| \geq (1 - \rho)|A_{2}'| \geq (1 - \sqrt{\rho})|A|$. Consequently, the intersection $A' := A_{1}' \cap A_{2}''$ satisfies $|A'| \geq (1 - \rho - \sqrt{\rho})|A|$. Evidently, 
\begin{displaymath} |A' + c_{1}B| \leq \delta^{-\epsilon}|A| \lesssim \delta^{-C_{0}/\log_{2}(1/\epsilon)}|A'| \quad \text{and} \quad |A' - c_{2}B| \lesssim_{\rho,\epsilon} \delta^{-C_{0}/\log_{2}(1/\epsilon)}|A'|. \end{displaymath} 
By Lemma \ref{PRIneq}, there exists a further subset $A'' \subset A'$ with 
\begin{displaymath} |A''| \geq (1 - \rho)|A'| \geq (1 - \rho)(1 - \rho - \sqrt{\rho})|A| \stackrel{\eqref{form117}}{\geq} (1 - \bar{\rho})|A| \end{displaymath}
such that
\begin{displaymath} |A'' + c_{1}B - c_{2}B|_{\delta} \lesssim_{\epsilon,\rho} \delta^{-2C_{0}/\log_{2}(1/\epsilon)}|A''| \stackrel{\eqref{form96}}{\leq} \delta^{-\bar{\epsilon}/2}|A|. \end{displaymath}
This contradicts \eqref{form70} for $\delta > 0$ small enough, depending on $\epsilon,\rho$, and proves \eqref{form118}. The proof of Theorem \ref{mainTechnical2} is complete. \end{proof}

\subsection{Removing reference to subsets}\label{s:subsetReduction} In the previous reductions, we have upgraded the assumptions of Theorem \ref{mainTechnical} in two ways: we have arranged the set $B$ to have small doubling, and the Frostman constant of $\nu$ to be $20$. However, there has been a price: whereas Theorem \ref{mainTechnical} only claims that $|A + cB|_{\delta} \geq \delta^{-\epsilon}|A|$ for some $c \in \spt(\nu)$, Theorem \ref{mainTechnical3} claims the existence of $c \in \spt(\nu)$ such that $|A' + cB|_{\delta} \geq \delta^{-\epsilon}|A'|$ for all $A' \subset A$ with $|A'| \geq (1 - \rho)|A|$. It turns out that this innocent-looking difference makes Theorem \ref{mainTechnical3} difficult to prove with a direct assault. Therefore, we need a final reduction to the following statement:

\begin{thm}\label{mainTechnical4} Let $0 < \beta \leq \alpha < 1$ and $\kappa > 0$. Then, for every $\gamma \in ((\alpha - \beta)/(1 - \beta),1]$, there exist $\epsilon,\epsilon_{0},\epsilon_{B},\delta_{0} \in (0,\tfrac{1}{2}]$, depending only on $\alpha,\beta,\gamma,\kappa$, such that the following holds. Let $\delta \in 2^{-\N}$ with $\delta \in (0,\delta_{0}]$, and let $A,B \subset (\delta \cdot \Z) \cap [0,1]$ satisfy the following hypotheses:
\begin{enumerate}
\item[(A)] \label{A} $|A| \leq \delta^{-\alpha}$.
\item[(B)] \label{B} $|B| \geq \delta^{-\beta}$, and $B$ satisfies the following Frostman condition: 
\begin{displaymath} |B \cap B(x,r)| \leq r^{\kappa}|B|, \qquad \delta \leq r \leq \delta^{\epsilon_{0}}. \end{displaymath}
Assume moreover that $|B + B| \leq \delta^{-\epsilon_{B}}|B|$. 
\end{enumerate}
Further, let $\nu$ be a Borel probability measure with $\spt (\nu) \subset [-1,1]$ satisfying the Frostman condition $\nu(B(x,r)) \leq 40 \cdot r^{\gamma}$ for $x \in \R$ and $r \geq \delta$. Then, there exists a point $c \in \spt (\nu)$ such that $|A + cB|_{\delta} \geq \delta^{-\epsilon}|A|$. \end{thm}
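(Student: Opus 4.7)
The plan is to execute the strategy outlined in Section \ref{s:outline}, adapting Bourgain's proof of Theorem \ref{bourgain} to the asymmetric setting $|B| < |A|$. First I would perform a multiscale pigeonholing reduction to put $A$ and $B$ into a tree (Cantor-like) form: choose a scale parameter $m \in \N$ large compared to the desired loss $\epsilon$, write $\delta = 2^{-mN}$, and pass to large subsets on which the branching numbers $R_A(s), R_B(s)$ at scales $s \in [N]$ are (up to factors of $2^{\epsilon m}$) concentrated on a single value at each scale. The small-doubling hypothesis $|B+B| \leq \delta^{-\epsilon_B}|B|$ is the key tool for enforcing the dichotomy $R_B(s) \in \{1, 2^m\}$ of property (P2), since partial branching of $B$ at many scales would cause the sumset to grow too fast; Shmerkin's inverse theorem (Remark \ref{rem2}) provides the formal machinery. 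The absolute Frostman constant $40$ on $\nu$ for all $r \geq \delta$ --- established in Section \ref{s:FrostmanReduction} --- ensures that rescalings of $\nu$ at any subscale still satisfy a $\gamma$-Frostman bound with an absolute constant, which is vital for the local projection step below.

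Once in tree form, I would exploit the counting identity from \eqref{form90}--\eqref{form92}. Setting $\mathcal{N} := \{s \in [N] : R_B(s) = 1\}$, property (P2) forces $|\mathcal{N}| = (1 - \beta)N$. If $R_A(s) \geq 2^{\Gamma m}$ on all of $\mathcal{N}$ for some $\Gamma > (\alpha - \beta)/(1 - \beta)$, then $\prod_{s \in [N]} R_A(s) > \delta^{-\alpha}$, contradicting hypothesis (A). Hence on a positive fraction $\mathcal{G} \subset \mathcal{N}$ one has $R_A(s) \leq 2^{\Gamma m}$, with $\Gamma$ chosen strictly between $(\alpha - \beta)/(1 - \beta)$ and $\gamma$ (possible since the gap is positive). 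However, at these good scales $R_B(s) = 1$, so there is no $B$-branching to exploit directly. Following Bourgain, I would recombine consecutive scales into blocks $\mathcal{I} \subset [N]$ on which the block $B$-branching $R_B(\mathcal{I}) := \prod_{s \in \mathcal{I}} R_B(s)$ takes a prescribed, small but nontrivial, value $\approx 2^{\epsilon m |\mathcal{I}|}$, and then rerun the counting at the block level to extract a positive fraction of good blocks satisfying simultaneously $R_B(\mathcal{I}) \approx 2^{\epsilon m |\mathcal{I}|}$ and $R_A(\mathcal{I}) \leq 2^{\Gamma m |\mathcal{I}|}$.

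On each good block $\mathcal{I}$, inside a parent cube $I \times J$ realising the prescribed branching of $A$ and $B$, I rescale by $T_I \times T_J$ to work at effective scale $2^{-m|\mathcal{I}|}$. The rescaled copy of $B$ inside $J$ then has covering number $\approx 2^{\epsilon m |\mathcal{I}|}$ and inherits a Frostman condition, while the rescaled copy of $A$ inside $I$ has covering number at most $2^{\Gamma m |\mathcal{I}|}$. Applying a Kaufman-type potential-theoretic projection estimate in the spirit of Lemma \ref{lemma3}, which is available because $\gamma > \Gamma$ and the rescaled $\nu$ remains $\gamma$-Frostman, a typical $c$ in the rescaled support of $\nu$ yields an increment $\log|(A \cap I) + c(B \cap J)|_{2^{-m(s+|\mathcal{I}|)}} \geq \log R_A(\mathcal{I}) + \epsilon' m |\mathcal{I}|$ for some $\epsilon' > 0$. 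Summing these local gains via the entropy framework --- applying Lemma \ref{entropyLemma} and \eqref{entropyIneq} to the pushforward of the normalised counting measure on $A \times B$ under $(a,b) \mapsto a + cb$, with the partition induced by the block decomposition --- produces a global entropic lower bound that translates to $|A + cB|_\delta \geq \delta^{-\epsilon}|A|$.

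The main obstacle I anticipate is the multiscale reduction: simultaneously arranging the branching dichotomy for $B$, preserving the $\kappa$-Frostman property at every scale after refinement, and retaining small doubling through the pigeonholings is technically delicate, since $\epsilon$-losses accumulate over $\sim N$ scales and must remain controlled relative to the gap $\gamma - (\alpha-\beta)/(1-\beta)$. A secondary difficulty is the simultaneous selection of a single $c \in \spt(\nu)$ that realises the ``generic $c$'' condition on every good block at once; this should be achievable by an expectation argument via \eqref{form84} (taking $\E_{c \sim \nu}$ of the total entropic gain), but the fact that the good blocks depend on the parent cubes and on the arithmetic interaction of $A \cap I$ with $c(B \cap J)$ is what makes the combinatorics nontrivial, and is the place where I would expect to recycle Bourgain's argument most faithfully.
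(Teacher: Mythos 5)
Your proposal follows essentially the same route as the paper's proof: Shmerkin's inverse theorem to obtain uniform, polarised subsets, the branching-count argument to locate a positive fraction of blocks with small $A$-branching and small-but-nontrivial $B$-branching, a local Kaufman/$L^2$-type projection estimate on each such block, and an entropy summation over $c \sim \nu$ to extract a single good $c$. The one point your outline under-specifies --- and which costs the paper two separate pruning steps (Proposition \ref{prop2} and Section \ref{s:pruningII}) --- is that the local estimate (Lemma \ref{lemma3}) requires the rescaled copy of $B$ inside a block to be $\Delta^{2\xi}$-separated, which does not follow from a Frostman condition alone but is engineered by collapsing the $B$-branching on the right-hand portion of each block so that all branching occurs near the block's start.
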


Theorem \ref{mainTechnical4} only differs from Theorem \ref{mainTechnical3} in its (superficially) weaker conclusion, and in that the Frostman constant of $\nu$ has increased from $20$ to $40$.

\begin{proof}[Proof of Theorem \ref{mainTechnical3} assuming Theorem \ref{mainTechnical4}] Fix the parameters $0 < \beta \leq \alpha < 1$, $\kappa$, and $\gamma > (\alpha - \beta)/(1 - \beta)$ from Theorem \ref{mainTechnical3}. As usual, our task is to find the parameters $\epsilon,\epsilon_{0},\epsilon_{B},\delta_{0},\rho$ such that Theorem \ref{mainTechnical3} is satisfied. In doing so, we apply Theorem \ref{mainTechnical4} to the parameters $0 < \beta \leq \bar{\alpha} < 1$ and $\kappa$, where $\bar{\alpha} \in (\alpha,1)$ is arbitrary such that the key inequality $\gamma > (\bar{\alpha} - \beta)/(1 - \beta)$ still holds. Then, we let 
\begin{equation}\label{a21} \bar{\epsilon},\bar{\epsilon}_{0},\bar{\epsilon}_{B},\bar{\delta}_{0} > 0 \end{equation}
be the constants given by Theorem \ref{mainTechnical4} with parameters $\bar{\alpha},\beta,\kappa,\gamma$. We now begin defining the parameters $\epsilon,\epsilon_{0},\epsilon_{B},\delta_{0},\rho$. We set
\begin{equation}\label{a22} \epsilon_{0} := \bar{\epsilon}_{0} \quad \text{and} \quad \epsilon_{B} = \bar{\epsilon}_{B}. \end{equation}
We will need that $\delta_{0} \leq \bar{\delta}_{0}$, and there will be an additional (simple) dependences on the allowed parameters, which will be explained when they arise. To define the parameters $\epsilon,\rho$, fix a natural number $N \sim 1/\bar{\epsilon}$, so that the following holds:
\begin{equation}\label{a23} (N - 1)^{-1} < \bar{\epsilon}/2. \end{equation}
Then, let
\begin{equation}\label{b9} \epsilon := \frac{\bar{\alpha} - \alpha}{2^{N + 1}}. \end{equation}
Finally, define $\rho > 0$, depending only on $\bar{\epsilon}$, so small that 
\begin{equation}\label{form120}  \left(2(1 - (1 - \rho)^{N}) \right)^{1/2^{N}} \leq \tfrac{1}{2}. \end{equation}
This is possible, since the inequality is clearly true for $\rho = 0$. 

We now make the counter assumption that Theorem \ref{mainTechnical3} fails for certain $\delta \in (0,\delta_{0}]$, $A,B \subset (\delta \cdot \Z) \cap [0,1]$, and a Borel probability measure $\nu$ on $[-1,1]$, satisfying the hypotheses of Theorem \ref{mainTechnical3} with parameters $\alpha,\beta,\kappa,\gamma$, and the constants $\epsilon_{0},\epsilon,\delta_{0}$ described above. This means that for every $c \in C = \spt(\nu)$, there exists a subset $A_{c} \subset A$ with the properties
\begin{equation}\label{b5} |A_{c}| \geq (1 - \rho)|A| \quad \text{and} \quad |A_{c} + cB|_{\delta} \leq \delta^{-\epsilon}|A|. \end{equation}
The plan is to use this information to construct a new set $\bar{A} \subset (\delta \cdot \Z) \cap [0,1]$, and a new probability measure $\bar{\nu}$ on $[-1,1]$, such that the triple $\bar{A},B,\bar{\nu}$ satisfies the hypotheses of Theorem \ref{mainTechnical4} with parameters $\bar{\alpha},\beta,\kappa,\gamma$ and constants $\bar{\epsilon}_{0},\bar{\epsilon}_{B}$, but nevertheless $|\bar{A} + cB| < \delta^{-\bar{\epsilon}}|\bar{A}|$ for all $c \in \spt(\bar{\nu})$. This contradiction will complete the proof of Theorem \ref{mainTechnical3}. 

Given such a set $A_{c} \subset A$ for every $c \in C$, we observe that
\begin{equation}\label{form82} \int \ldots \int |A_{c_{1}} \cap \ldots \cap A_{c_{N}}| \, d\nu(c_{1})\cdots d\nu(c_{N}) \geq (1 - \rho)^{N}|A| \end{equation}
by H\"older's inequality. Consider the set
\begin{displaymath} \Omega := \{(c_{1},\ldots,c_{N}) \in C^{N} : |A_{c_{1}} \cap \ldots \cap A_{c_{N}}| \geq \tfrac{1}{2}|A|\}. \end{displaymath}
If "$I$" temporarily stands for the integral in \eqref{form82}, we have
\begin{displaymath} (1 - \rho)^{N}|A| \leq I  \leq \nu^{N}(\Omega^{c}) \cdot \tfrac{1}{2}|A| + (1 - \nu^{N}(\Omega^{c})) \cdot |A|, \end{displaymath}
which can be rearranged to $\nu^{N}(\Omega^{c}) \leq 2(1 - (1 - \rho)^{N})$. Consequently
\begin{equation}\label{form119} \nu^{N}(\Omega) \geq 1 - 2(1 - (1 - \rho)^{N}) =: 1 - \theta_{0}. \end{equation}
For $c_{1},\ldots,c_{n} \in C$ fixed, we define
\begin{displaymath} \Omega_{c_{1}\cdots c_{n}} := \{(c_{n + 1},\ldots,c_{N}) \in C^{N - n} : (c_{1},\ldots,c_{N}) \in \Omega\}. \end{displaymath}
It follows from Fubini's theorem that
\begin{equation}\label{b6} \nu^{N - n}(\Omega_{c_{1}\cdots c_{n}}) = \int \nu^{N - n - 1}(\Omega_{c_{1}\cdots c_{n}c}) \, d\nu(c) \end{equation}
for all $c_{1},\ldots,c_{n} \in C$, and $1 \leq n \leq N - 2$. The same remains true for $n = 0$, if the left hand side is interpreted as $\nu^{N}(\Omega)$. Equation \eqref{b6} also remains valid for $n = N - 1$ if we define the notation $\nu^{N - n - 1} = \nu^{0}$ as follows:
\begin{equation}\label{b7} \nu^{0}(\Omega_{c_{1}\cdots c_{N - 1}c}) := \mathbf{1}_{\Omega}(c_{1},\ldots,c_{N - 1},c). \end{equation}
We will use this notation in the sequel. 

For $(c_{1},\ldots,c_{N}) \in \Omega$ fixed, we write
\begin{equation}\label{form121} A_{c_{1}\cdots c_{N}} := A_{c_{1}} \cap \ldots \cap A_{c_{N}} \quad \Longrightarrow \quad |A_{c_{1}\cdots c_{N}}| \geq \tfrac{1}{2}|A|. \end{equation}
We now construct a sequence of sets $H_{n} \subset \delta \cdot \Z$, $1 \leq n \leq N$. At the same time, we will construct subsets $C_{1},\ldots,C_{N} \subset C$, and points $c_{n} \in C_{n}$, $1 \leq n \leq N$, with the properties  
\begin{equation}\label{b3} \nu^{N - n}(\Omega_{c_{1}\cdots c_{n}}) \geq 1 - \theta_{n} \quad \text{and} \quad \nu(C_{n}) \geq 1 - \theta_{n}, \quad 1 \leq n \leq N, \end{equation}
where we define inductively
\begin{displaymath} \theta_{n} := \sqrt{\theta_{n - 1}} \geq \theta_{n - 1}. \end{displaymath}
 In particular, the first part of \eqref{b3} with $n = N$ shows that $(c_{1},\ldots,c_{N}) \in \Omega$, recall the notation \eqref{b7}. As a second remark, recalling the definition of $\theta_{0} = 2(1 - (1 - \rho)^{N})$, and combining this with the definition of $\rho$ in \eqref{form120}, one sees that $\theta_{n} \leq \tfrac{1}{2}$ for all $1 \leq n \leq N$. To begin with, we define
\begin{displaymath} C_{1} := \{c \in C : \nu^{N - 1}(\Omega_{c}) \geq 1 - \theta_{1}\}, \end{displaymath}
and we choose an arbitrary element $c_{1} \in C_{1}$. Since
\begin{displaymath} 1 - \theta_{0} \leq \nu^{N}(\Omega) = \int \nu^{N - 1}(\Omega_{c}) \, d\nu(c) \leq \nu(C_{1}^{c}) \cdot (1 - \theta_{1}) + (1 - \nu(C_{1}^{c})) = -\theta_{1} \cdot \nu(C_{1}^{c}) + 1 \end{displaymath}
by \eqref{form119}, we observe that $\nu(C_{1}^{c}) \leq \theta_{0}/\theta_{1} = \theta_{1}$, and consequently $\nu(C_{1}) \geq 1 - \theta_{1}$. In particular $C_{1} \neq \emptyset$. We then define
\begin{displaymath} H_{1} := (c_{1}B)_{\delta}. \end{displaymath}
Assume inductively that $H_{1},\ldots,H_{n}$ and $C_{1},\ldots,C_{n} \subset C$, and $c_{j} \in C_{j}$, $1 \leq j \leq n \leq N - 1$, have already been constructed, and satisfy \eqref{b3}. We pick an element $c_{n + 1} \in C_{n + 1}$, where
\begin{displaymath} C_{n + 1} := \{c \in C : \nu^{N - n - 1}(\Omega_{c_{1}\cdots c_{n} c}) \geq 1 - \theta_{n + 1}\}, \quad 1 \leq n \leq N - 1. \end{displaymath}
For $n = N - 1$, the notation $\nu^{N - n - 1}(\Omega_{c_{1}\cdots c_{n}c})$ should be interpreted as in \eqref{b7}, so 
\begin{displaymath} C_{N} = \{c \in C : \mathbf{1}_{\Omega}(c_{1},\ldots,c_{N - 1},c) \geq 1 - \theta_{N}\} = \{c \in C : (c_{1},\ldots,c_{N - 1},c) \in \Omega\}. \end{displaymath}
For an arbitrary choice $c_{n + 1} \in C_{n + 1}$, we note that the first part of \eqref{b3} is satisfied with index "$n + 1$", by the definition of $C_{n + 1}$. 

The set $C_{n + 1}$ also satisfies the second part of \eqref{b3} with index "$n + 1$", since
\begin{displaymath} 1 - \theta_{n} \stackrel{\eqref{b3}}{\leq} \nu^{N - n}(\Omega_{c_{1}\cdots c_{n}}) \stackrel{\eqref{b6}}{=} \int \nu^{N - n - 1}(\Omega_{c_{1}\cdots c_{n}c}) \, d\nu(c) \leq -\theta_{n + 1} \cdot \nu(C_{n + 1}^{c}) + 1, \end{displaymath}
and consequently $\nu(C_{n + 1}^{c}) \leq \theta_{n}/\theta_{n + 1} = \theta_{n + 1}$, and $\nu(C_{n + 1}) \geq 1 - \theta_{n + 1}$.

Whereas $c_{1} \in C_{1}$ was chosen arbitrarily, the element $c_{n + 1} \in C_{n + 1}$ is chosen in such a way that the quantity $|H_{n} + c_{n + 1}B|_{\delta}$ is maximised, among all possible choices $c_{n + 1} \in C_{n + 1}$. We then define
\begin{displaymath} H_{n + 1} := H_{n} + (c_{n + 1}B)_{\delta}. \end{displaymath}
Continuing in this manner produces a distinguished sequence $(c_{1},\ldots,c_{N}) \in \Omega$, which we fix for the remainder of the argument, and a sequence of sets $H_{1},\ldots,H_{N}$.

Note that $H_{n} \subset (\delta \cdot \Z) \cap [0,N]$ for all $1 \leq n \leq N$ by a straightforward induction, so $|H_{n}| \leq 2N\delta^{-1}$. Therefore, by the pigeonhole principle, there exists an index $n \in \{1,\ldots,N - 1\}$ such that
\begin{equation}\label{a4} |H_{n + 1}| \leq (2N\delta^{-1})^{1/(N - 1)}|H_{n}| \leq 4\delta^{-1/(N - 1)}|H_{n}|. \end{equation}
For this particular index $n \in \{1,\ldots,N - 1\}$, we then have $|H_{n} + cB|_{\delta} \lesssim |H_{n + 1}| \leq 4\delta^{-1/(N - 1)}|H_{n}|$ for all $c \in C_{n + 1}$ by the definition of $H_{n + 1}$, and therefore
\begin{equation}\label{a5} |H_{n} + cB|_{\delta} \leq \delta^{-\bar{\epsilon}/2}|H_{n}|, \qquad c \in C_{n + 1}, \end{equation}
recalling \eqref{a23}, and assuming that $\delta > 0$ is small enough.

We now claim that \eqref{a5} violates Theorem \ref{mainTechnical4} with parameters $\bar{\alpha},\beta,\kappa,\gamma$, and with the objects
\begin{equation}\label{b10} \bar{A} := H_{n}, \quad B, \quad \text{and} \quad \bar{\nu} := \nu(C_{n + 1})^{-1} \cdot \nu|_{C_{n + 1}}. \end{equation}
We need to check the following items to contradict Theorem \ref{mainTechnical4}:
\begin{itemize}
\item[(a)] $|\bar{A}| \leq \delta^{-\bar{\alpha}}$,
\item[(b)] $|B| \geq \delta^{-\beta}$ and $|B + B| \leq \delta^{-\bar{\epsilon}_{B}}|B|$, and $B$ satisfies a Frostman condition with exponents $\kappa$ and $\bar{\epsilon}_{0}$, 
\item[(c)] $\bar{\nu}$ satisfies a Frostman condition with exponent $\gamma$ and constant $40$.
\end{itemize}
Point (b) is true by assumption (and since we chose $\epsilon_{B} = \bar{\epsilon}_{B}$ and $\epsilon_{0} = \bar{\epsilon}_{0}$ in \eqref{a22}), so only (a) and (c) need to be verified. We first use the Pl\"unnecke-Ruzsa inequality to establish (a), assuming that $\delta > 0$ is sufficiently small in terms of $N,\bar{\alpha}$. Clearly $\bar{A}$ can be written as a sum of $n \leq N$ sets of the form $(c_{m}B)_{\delta}$, for some $1 \leq m \leq n$, where $c_{m}$ is an index in the (fixed) sequence $(c_{1},\ldots,c_{N}) \in \Omega$. Noting that $A_{c_{1}\cdots c_{N}} \subset A_{c_{m}} \subset A$, each of these sets individually satisfies
\begin{displaymath} |A_{c_{1}\cdots c_{N}} + (c_{m}B)_{\delta}| \lesssim |A_{c_{m}} + c_{m}B|_{\delta} \stackrel{\eqref{b5}}{\leq} \delta^{-\epsilon}|A| \stackrel{\eqref{form121}}{\leq} 2\delta^{-\epsilon}|A_{c_{1}\cdots c_{N}}|. \end{displaymath}
We may therefore infer that
\begin{displaymath} |\bar{A}| \lesssim_{N,\rho} \delta^{-2^{N}\epsilon}|A| \leq \delta^{-2^{N}\epsilon - \alpha}. \end{displaymath}
from Lemma \ref{PRIneq}. This inequality implies $|\bar{A}| \leq \delta^{-\bar{\alpha}}$ for small enough $\delta > 0$, recalling our choice of $\epsilon$ at \eqref{b9}. 

We move to (c). Recalling \eqref{b10}, and from \eqref{b3} that $\nu(C_{n + 1}) \geq 1 - \theta_{N} \geq \tfrac{1}{2}$, we have
\begin{displaymath} \bar{\nu}(B(x,r)) \leq 2 \cdot \nu(B(x,r)) \leq 40 \cdot r^{\gamma}, \qquad x \in \R, \, r \geq \delta. \end{displaymath}
We have now reached a situation which violates Theorem \ref{mainTechnical4} for the choice of parameters $\bar{\alpha},\beta,\kappa,\gamma$: the objects $\bar{A},B,\bar{\nu}$ satisfy all the hypotheses (by (a)-(c)), but nevertheless we have $|\bar{A} + cB|_{\delta} \leq \delta^{-\bar{\epsilon}}|\bar{A}|$ for all $c \in C_{n + 1}$, a set of full $\bar{\nu}$ measure, by \eqref{a5}. This violates Theorem \ref{mainTechnical4}, since $\bar{\epsilon} > 0$ was the constant associated to $\bar{\alpha},\beta,\kappa,\gamma$. Therefore the counter assumption \eqref{b5} is false, and the proof of Theorem \ref{mainTechnical3} is complete.

To be precise, we have ignored that $\bar{A} \subset [0,N]$ instead of $\bar{A} \subset [0,1]$. This can be dealt with as in the proof of Corollary \ref{longSumCor}, or below \eqref{form59}. We leave this to the reader. \end{proof}

\subsection{Bonus reduction}\label{s:bonusReduction} We have now reduced the proof of Theorem \ref{mainTechnical} to the proof of Theorem \ref{mainTechnical4}. For notational convenience in the future, we mention one final reduction: we may assume that $1 \in \spt (\nu)$. Indeed, assume that Theorem \ref{mainTechnical4} is known under this extra assumption. Then, let $A,B,\nu$ be a general triple as in Theorem \ref{mainTechnical4}. Since $\nu$ is a probability measure, $\spt (\nu) \subset [-1,1]$ and $\nu(B(x,r)) \leq 40 \cdot r^{\gamma}$, the point $c_{0} \in \spt(\nu) \cap [-1,1]$ with maximal absolute value satisfies
\begin{displaymath} |c_{0}| \geq 40^{-1/\gamma}. \end{displaymath}
Consider the measure $\bar{\nu}(A) := \nu(c_{0}A)$. Observe that $\bar{\nu}(B(x,r)) \leq 40 \cdot r^{\gamma}$ and $\spt(\bar{\nu}) = c_{0}^{-1}\spt(\nu)$. Therefore $1 \in \spt(\bar{\nu}) \subset [-1,1]$, so $\bar{\nu}$ satisfies the extra assumption. We then apply the (assumedly known) version of Theorem \ref{mainTechnical4} to $A,(c_{0}B)_{\delta},\bar{\nu}$. The set $(c_{0}B)_{\delta}$ will have slightly worse constants than $B$, in a manner depending on $\gamma$ only, so the theorem needs to be applied with appropriately modified parameters. Once this has been done, we find a point $c = c_{0}^{-1}c_{1} \in \spt(\bar{\nu})$, where $c_{1} \in \spt(\nu)$, such that
\begin{displaymath} |A + cB|_{\delta} \gtrsim |A + (c_{1}/c_{0}) \cdot (c_{0}B)_{\delta}|_{\delta} = |A + c(c_{0}B)_{\delta}|_{\delta} \geq \delta^{-\epsilon}|A|, \end{displaymath}
and the proof of Theorem \ref{mainTechnical4} (without the extra assumption) is complete.

\section{Proof of Theorem \ref{mainTechnical4}}

\subsection{Preliminaries} We have now reduced the proof of Theorem \ref{mainTechnical} to the proof of Theorem \ref{mainTechnical4}. We fix the parameters $\alpha,\beta,\gamma,\kappa$, with $0 < \beta \leq \alpha < 1$ and $(\alpha - \beta)/(1 - \beta) < \gamma \leq 1$. We also fix sets $A,B \subset (\delta \cdot \Z) \cap [0,1]$ and a Borel probability measure $\nu$ with $\spt(\nu) \subset [-1,1]$, satisfying all the hypotheses of Theorem \ref{mainTechnical4} with sufficiently small constants $\epsilon_{0},\epsilon_{B} > 0$ to be determined later. For future reference, we write
\begin{equation}\label{barAlpha} |A| =: \delta^{-\bar{\alpha}}, \qquad 0 \leq \bar{\alpha} \leq \alpha. \end{equation}
We make a counter assumption: $|A + cB|_{\delta} < \delta^{-\epsilon}|A|$ for all $c \in \spt(\nu)$. Since we may assume that $1 \in \spt(\nu)$ by Section \ref{s:bonusReduction}, we have the assumptions
\begin{equation}\label{counterAss} |A + B| \leq  \delta^{-\epsilon}|A| \quad \text{and} \quad |B + B| \leq \delta^{-\epsilon_{B}}|B|. \end{equation} 
If $\epsilon,\epsilon_{B} > 0$ in \eqref{counterAss} are small enough, depending only on $\alpha,\beta,\kappa,\gamma$, we will be able to find a point $c \in \spt(\nu)$ such that $|A + cB|_{\delta} \geq \delta^{-\epsilon}|A|$. This will violate the counter assumption, and prove Theorem \ref{mainTechnical4}. The necessary values of $\epsilon = \epsilon(\alpha,\beta,\gamma,\kappa) > 0$ and $\epsilon_{B} = \epsilon_{B}(\alpha,\beta,\gamma,\kappa) > 0$ in \eqref{counterAss} will be fixed during the proof of Proposition \ref{prop1}. 

\subsection{Shmerkin's inverse theorem} In the case $A = B$, Bourgain \cite{Bourgain10} used an assumption of the form \eqref{counterAss} to obtain, up to passing to a subset, a special multi-scale structure inside $A$: informally speaking, when passing from one scale to the next, either $A$ has \emph{full branching}, or then \emph{no branching}. Similar statements have, after Bourgain's work, been proved by Hochman \cite{Ho} and Shmerkin \cite{Sh} in the case where $A \neq B$, and where $A$ and $B$ may have completely different sizes. This is our situation, and we will apply Shmerkin's theorem, which we state in Theorem \ref{shmerkin}. 

\begin{definition}[$\delta$-sets and measures, $L^{2}$-norms]\label{def:deltaMeasures} Let $\delta \in 2^{-\N}$ be a dyadic rational. A subset of $(\delta \cdot \Z) \cap [0,1)$ is called a \emph{$\delta$-set}. A probability measure supported on a $\delta$-set is called a \emph{$\delta$-measure}. The $L^{2}$-norm of a $\delta$-measure $\mu$ is defined by
\begin{displaymath} \|\mu\|_{L^{2}} := \left(\sum_{z \in \delta \cdot \Z} \mu(\{z\})^{2} \right)^{1/2}. \end{displaymath}
\end{definition}

We will only be concerned with $\delta$-measures of the form $\mu = |A|^{-1}\mathcal{H}^{0}|_{A}$, where $A \subset [0,1)$ is a $\delta$-set. Then $\|\mu\|_{L^{2}} = |A|^{-1/2}$. 

\begin{definition}[Uniform sets] Let $m,N \in \N$, and set $\delta := 2^{-mN} \in 2^{-\N}$. For $A \subset [0,1)$ and $s \in \{0,\ldots,N - 1\}$, write $\mathcal{I}_{ms}(A) := \{I \in \mathcal{D}_{ms} : A \cap I \neq \emptyset\}$ for the collection of dyadic intervals of side-length $2^{-ms}$ (these are denoted $\mathcal{D}_{ms}$) with non-empty intersection with $A$. We say that $A$ is \emph{$(m,N)$-uniform} if
\begin{displaymath} R_{A}(s) := |I \cap A|_{2^{-m(s + 1)}}, \qquad I \in \mathcal{I}_{ms}(A), \end{displaymath}
is independent of the choice of $I \in \mathcal{I}_{ms}(A)$. We may also write that $A$ is \emph{$(m,N,R_{A})$-uniform} if the \emph{branching numbers} $R_{A}$ need emphasising.
\end{definition} 

In the definition of $R_{A}(s)$, is it important to remember that $|H|_{r}$ is, by definition, the number of dyadic $r$-intervals intersecting $H$ -- instead of the $r$-covering number. This distinction has hardly mattered earlier in the paper. 

As in \cite{Sh}, we will only consider uniform sets which are also $\delta$-sets. It was observed by Bourgain \cite{Bourgain10} that every $\delta$-set contains a uniform subset of "comparable" cardinality. Thus, the possibility of finding uniform subsets has nothing to do, yet, with an assumption like \eqref{counterAss}. To explain what \eqref{counterAss} implies, we introduce the following terminology:
\begin{definition}[$\eta$-polarised pair] Let $m,N \in \N$, $\delta = 2^{-mN}$, and $\eta > 0$. A pair of $(m,N)$-uniform sets $(A,B)$ is \emph{$(\eta,m,N)$-polarised}, if 
\begin{displaymath} R_{B}(s) > 1 \quad \Longrightarrow \quad R_{A}(s) \geq 2^{(1 - \eta)m}, \qquad s \in \{0,\ldots,N - 1\}. \end{displaymath}
If $A = B$, we say that $A$ (instead of $(A,A)$) is $(\eta,m,N)$-polarised. 
\end{definition}
Note that $R_{A}(s) \leq 2^{m}$ for all $s \in \{0,\ldots,N - 1\}$, so $R_{A}(s) \geq 2^{(1 - \eta)m}$ means that $R_{A}(s)$ is nearly maximal. Bourgain \cite{Bourgain10} proved that if $A$ is a $\delta$-set with $|A + A| \leq \delta^{-\epsilon}|A|$, then $A$ contains a uniform subset $A'$ such that $|A'| \geq \delta^{\eta}|A|$, and $A'$ is $\eta$-polarised, where $\eta = o_{\epsilon}(1)$. This means that either $R_{A'}(s) = 1$ or $R_{A'}(s) \geq 2^{(1 - \eta)m}$ for all scales "$s$". 

Versions of Bourgain's "polarisation theorem", explained above, for two different sets were found by Hochman \cite{Ho} and Shmerkin \cite{Sh}. Hochman first showed that if $\mu,\nu$ are probability measures on $[0,1)$, then the entropy inequality $H(\mu \ast \nu) \leq H(\mu) + \epsilon$ implies a measure-theoretic version of the polarisation phenomenon for $\mu,\nu$. The set version, below, was established by Shmerkin \cite{Sh} (with a proof very different from \cite{Ho}):

\begin{thm}[Shmerkin]\label{shmerkin} Let $\eta > 0$, and let $m(\eta) \in \N$ be sufficiently large, depending on $\eta$. Then, for all $m \geq m(\eta)$ there exists $\epsilon = \epsilon(\eta,m) > 0$ such that the following holds for all large enough $N \in \N$. Let $\delta = (2^{-m})^{N}$, and let $A,B \subset [0,1]$ be $\delta$-sets such that
\begin{displaymath} |A + B| \leq \delta^{-\epsilon}|A|. \end{displaymath}
Then, there exist $(m,N)$-uniform sets $A' \subset A$ and $B' \subset B$ such that $|A'| \geq \delta^{\eta}|A|$, $|B'| \geq \delta^{\eta}|B|$, and $(A',B')$ is $(\eta,m,N)$-polarised. \end{thm}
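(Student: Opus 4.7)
My plan is to deduce Theorem \ref{shmerkin} by combining the standard uniformisation of $\delta$-sets with the deeper $L^{q}$-flattening inverse theorem of Shmerkin from \cite{Sh}. I begin with a uniformisation step. By Bourgain's pigeonhole observation (quoted just above the theorem statement), every $\delta$-set in $[0,1)$ contains an $(m,N)$-uniform subset of cardinality at least $\delta^{o_{m}(1)}$ times the original. Applying this to $A$ and $B$ separately, I may assume, at the cost of a $\delta^{\eta/10}$-factor absorbed into the final bound, that $A$ and $B$ are themselves $(m,N)$-uniform; the hypothesis then reads $|A + B| \leq \delta^{-\epsilon'}|A|$ with $\epsilon' = \epsilon + O(\eta/m)$, which is harmless since the final $\epsilon = \epsilon(\eta,m)$ will be chosen far below $\eta/m$.

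Next I would pass to an $L^{2}$ formulation. Set $\mu := |A|^{-1}\mathcal{H}^{0}|_{A}$ and $\nu := |B|^{-1}\mathcal{H}^{0}|_{B}$. Since $\spt(\mu \ast \nu) \subset A + B$, Cauchy-Schwarz gives
\begin{displaymath} \|\mu \ast \nu\|_{L^{2}}^{2} \geq |A + B|^{-1} \geq \delta^{\epsilon'}|A|^{-1} = \delta^{\epsilon'}\|\mu\|_{L^{2}}^{2}, \end{displaymath}
i.e.\ convolution with $\nu$ does not "flatten" $\mu$ in $L^{2}$. This is precisely the hypothesis of Shmerkin's $L^{q}$-flattening inverse theorem \cite[Theorem 2.1]{Sh} applied with $q = 2$. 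For the $(m,N)$-uniform pair $(\mu,\nu)$, its conclusion is a multi-scale polarisation statement: there exists a "bad" set of scales $\mathcal{B} \subset \{0,\ldots,N - 1\}$ with $|\mathcal{B}| \leq \eta N/2$ such that for every $s \notin \mathcal{B}$ one has the implication $R_{B}(s) > 1 \Rightarrow R_{A}(s) \geq 2^{(1 - \eta)m}$.

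To upgrade this polarisation from scales of density $1 - \eta/2$ to \emph{every} scale, I would pigeonhole once more. At each bad scale $s \in \mathcal{B}$ the branching number $R_{B}(s)$ takes one of at most $2^{m}$ values, so I can first re-uniformise (passing to parent cells on which $R_{B}(s)$ is constant), then force $R_{B}(s) = 1$ by keeping only a single sub-cell per parent cell whenever that constant value exceeds $1$. The cost is at most $2^{m}$ per bad scale, totalling $\leq 2^{m|\mathcal{B}|} \leq \delta^{-\eta/2}$, which fits comfortably inside the $\delta^{\eta}$ cardinality budget allowed for $A'$ and $B'$; an identical clean-up applies to $A$ if needed.

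The main obstacle is obviously the invocation of Shmerkin's inverse theorem in the second step. Its proof in \cite{Sh} combines Hochman's entropy inverse theorem \cite{Ho} with a delicate multi-scale analysis that tensorises the uniform tree structure and feeds it through uniform versions of classical sumset estimates (Pl\"unnecke-Ruzsa, Balog-Szemer\'edi-Gowers). The engine there is the propagation of scale-by-scale $L^{2}$-flattening to a global polarisation statement while keeping quantitative control on the density of exceptional scales, and this is exactly the deep combinatorial-analytic step I would use as a black box rather than re-derive.
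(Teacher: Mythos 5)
This statement is not actually proved in the paper: it is Shmerkin's \cite[Theorem 2.1]{Sh} quoted as a black box, the only new content being the observation (Remark \ref{rem2}) that the conclusion holds for \emph{all} sufficiently large $m$ rather than for \emph{some} $m \geq m_{0}$, together with the routine translation from $\delta$-measures to $\delta$-sets. Your proposal also black-boxes Shmerkin's theorem, and your Cauchy--Schwarz translation of $|A + B| \leq \delta^{-\epsilon}|A|$ into $\|\mu \ast \nu\|_{L^{2}}^{2} \geq \delta^{\epsilon}\|\mu\|_{L^{2}}^{2}$ is exactly the right bridge to its hypothesis. However, two of your surrounding steps do not hold up.

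First, the order of operations in your opening step fails quantitatively. Bourgain's pigeonholing extracts an $(m,N)$-uniform subset $A_{0} \subset A$ only with $|A_{0}| \geq \delta^{c \log m / m}|A|$, so after uniformising, the doubling hypothesis degrades to $|A_{0} + B_{0}| \leq \delta^{-\epsilon - O(\log m/m)}|A_{0}|$. For fixed $m$ the loss $O(\log m/m)$ is a fixed positive constant, whereas the $\epsilon(\eta,m)$ tolerated by the inverse theorem carries no lower bound of this form and is in practice far smaller. Your remark that the loss is ``harmless since the final $\epsilon$ will be chosen far below $\eta/m$'' is exactly backwards: the smaller the final $\epsilon$, the more fatal the uniformisation loss. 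The inverse theorem must be applied to the original measures; it performs the uniformisation internally and outputs uniform subsets. Second, you mis-state the conclusion of \cite[Theorem 2.1]{Sh}: it does not assert polarisation of the original uniform pair outside a sparse set of bad scales, but directly produces $(m,N)$-uniform subsets $A' \subset \spt(\mu)$ and $B' \subset \spt(\nu)$ of relative size $\geq \delta^{\eta}$ on which the implication $R_{B'}(s) > 1 \Rightarrow R_{A'}(s) \geq 2^{(1 - \eta)m}$ holds at \emph{every} scale --- which is precisely the statement to be proved, so your collapsing step addresses a problem that does not arise. Finally, the one point that genuinely requires inspecting Shmerkin's proof --- that the conclusion holds for every $m \geq m(\eta)$ and not merely for some $m \geq m_{0}$, which is what the paper needs and records in Remark \ref{rem2} --- is not addressed in your proposal.
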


\begin{remark}\label{rem2} To be accurate, Theorem \ref{shmerkin} is a slight refinement of Shmerkin's theorem: \cite[Theorem 2.1]{Sh} literally contains the following statement: \emph{if $\eta > 0$ and $m_{0} \in \N$, then there exists $m = m(\eta,m_{0}) \geq m_{0}$ and $\epsilon = \epsilon(\eta,m_{0}) > 0$ as in Theorem \ref{shmerkin}.} However, if one inspects the proof of \cite[Theorem 2.1]{Sh}, one observes that the only dependence of $m = m(\eta,m_{0})$ on $m_{0}$ is "$m \geq m_{0}$", and any choice of $m \geq m_{0}$ works, provided that $m$ is also sufficiently large in terms of $\eta$. This is precisely what Theorem \ref{shmerkin} says. \end{remark}

As another remark, Shmerkin's theorem actually concerns a pair of $\delta$-measures $\mu_{1},\mu_{2}$ instead of $\delta$-sets: the measures of interest for our application are simply $\mu_{1} = |A|^{-1}\mathcal{H}^{0}|_{A}$ and $\mu_{2} = |B|^{-1}\mathcal{H}^{0}|_{B}$, and with such choices \cite[Theorem 2.1]{Sh} implies Theorem \ref{shmerkin}.

\begin{remark} We will be applying Theorem \ref{shmerkin} to dyadic scales of the form $\delta = 2^{-\ell m N}$, where $\ell,m,N \in \N$. Since $\delta = (2^{-m})^{\ell N} = (2^{-\ell m})^{N}$, a $\delta$-set $A \subset [0,1)$ may be $(m,\ell N)$-uniform, $(\ell m,N)$-uniform, or both. The former condition means that the branching numbers $R_{A}^{m}(s) = |A \cap I|_{2^{-m(s + 1)}}$ are well-defined for $m \in [\ell N]$, whereas the latter means that the branching numbers $R_{A}^{\ell m}(\sigma) = |A \cap I|_{2^{-\ell m(\sigma + 1)}}$ are well-defined for $\sigma \in [N]$. It is clear that every $(m,\ell N)$-uniform $2^{-\ell m N}$-set is $(\ell m,N)$-uniform, and indeed
\begin{displaymath} R_{A}^{\ell m}(\sigma) = \prod_{s = \ell \sigma}^{\ell(\sigma + 1) - 1} R_{A}^{m}(s), \qquad \sigma \in [N]. \end{displaymath}
The converse is generally not true, so $(m,\ell N)$-uniformity is a strictly stronger property than $(\ell m,N)$-uniformity. We will also be interested in pairs $(A,B)$ which are sometimes $(\eta,m,\ell N)$-polarised, and sometimes $(\eta,\ell m,N)$-polarised. In contrast to uniformity, there is no simple implication between these two properties. 

\end{remark}
In addition to Shmerkin's theorem, we will also need a lemma from its proof:

\begin{lemma}\label{ShLemma3} Let $m,\ell,N \in \N$, $\delta = 2^{-\ell mN}$, and let $A \subset [0,1)$ be an $(m,\ell N)$-uniform $\delta$-set. Then $A$ is also $(\ell m,N,R^{\ell m}_{A})$-uniform for some $R_{A} \colon [N] \to \{1,\ldots,2^{\ell m}\}$. If $\mathcal{S} \subset [N]$ is arbitrary, there exists $A' \subset A$ which is $(m,\ell N)$-uniform, and also $(\ell m,N,R_{A'}^{\ell m})$-uniform with 
\begin{displaymath} |A'| \geq |A| \cdot \prod_{\sigma \in \mathcal{S}} R_{A}^{\ell m}(\sigma)^{-1}, \quad \text{and} \quad R_{A'}^{\ell m}(\sigma) = \begin{cases} 1, & \sigma \in \mathcal{S}, \\ R_{A}^{\ell m}(\sigma), & \sigma \notin \mathcal{S}. \end{cases} \end{displaymath} 
A similar statement holds true if $\mathcal{S} \subset [\ell N]$, with the only difference that "$R_{A}^{\ell m}(\sigma)$" and "$R_{A'}^{\ell m}(\sigma)$" should be replaced by "$R_{A}^{m}(s)$" and "$R_{A'}^{m}(s)$" for $s \in [\ell N]$. \end{lemma}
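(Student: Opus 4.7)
The first claim is immediate from the definition of $(m,\ell N)$-uniformity: for any $I \in \mathcal{I}_{\ell m \sigma}(A)$, iterating the $m$-scale branching $\ell$ times gives
\begin{displaymath} |A \cap I|_{2^{-\ell m(\sigma+1)}} = \prod_{s=\ell \sigma}^{\ell(\sigma+1)-1} R^{m}_{A}(s), \end{displaymath}
which is independent of $I$, so $A$ is $(\ell m, N)$-uniform with $R^{\ell m}_{A}(\sigma) = \prod_{s=\ell\sigma}^{\ell(\sigma+1)-1} R^{m}_{A}(s)$.

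The main construction is a level-by-level pruning of $A$, and I would reduce it to the following one-scale sublemma: given a $(m,\ell N)$-uniform $\delta$-set $A$ and a scale $s_{0} \in [\ell N]$, if for each parent $I \in \mathcal{I}_{m s_{0}}(A)$ we select an arbitrary single child $J_{I} \in \mathcal{I}_{m(s_{0}+1)}(A)$ with $J_{I} \subset I$, then the set $A'' := A \cap \bigcup_{I \in \mathcal{I}_{m s_{0}}(A)} J_{I}$ is again $(m,\ell N)$-uniform, with $|A''| = |A|/R^{m}_{A}(s_{0})$, $R^{m}_{A''}(s_{0}) = 1$, and $R^{m}_{A''}(s) = R^{m}_{A}(s)$ for every $s \neq s_{0}$. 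The proof is direct: above scale $s_{0}$ every scale-$m s_{0}$ interval in $\mathcal{I}_{m s_{0}}(A)$ still meets $A''$ (since we chose one child for each), so all coarser branching numbers are unchanged; below scale $m(s_{0}+1)$ the subtrees rooted at the surviving $J_{I}$ coincide with subtrees of $A$, whose branching profiles are identical by the $(m,\ell N)$-uniformity of $A$.

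To prove the main statement, I iterate the sublemma over the set of scales $S^{\ast} := \bigsqcup_{\sigma \in \mathcal{S}} \{\ell \sigma, \ell \sigma + 1, \ldots, \ell(\sigma+1)-1\} \subset [\ell N]$. Each iteration at scale $s \in S^{\ast}$ multiplies the cardinality by $1/R^{m}_{A}(s)$ (the branching at scale $s$ has not been touched by earlier iterations, so it still equals $R^{m}_{A}(s)$ at the time of pruning) and leaves the branching at every other scale intact. The resulting $A' \subset A$ satisfies $R^{m}_{A'}(s) = 1$ for $s \in S^{\ast}$ and $R^{m}_{A'}(s) = R^{m}_{A}(s)$ for $s \notin S^{\ast}$, with
\begin{displaymath} |A'| = \frac{|A|}{\prod_{s \in S^{\ast}} R^{m}_{A}(s)} = \frac{|A|}{\prod_{\sigma \in \mathcal{S}} R^{\ell m}_{A}(\sigma)}. \end{displaymath}
Coarsening to scale $\ell m$ via the formula from the first paragraph then yields $R^{\ell m}_{A'}(\sigma) = 1$ for $\sigma \in \mathcal{S}$ and $R^{\ell m}_{A'}(\sigma) = R^{\ell m}_{A}(\sigma)$ for $\sigma \notin \mathcal{S}$, as claimed. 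The variant with $\mathcal{S} \subset [\ell N]$ follows by applying the same sublemma once per $s \in \mathcal{S}$.

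I do not anticipate a genuine obstacle here: the whole argument is bookkeeping that the single-scale pruning operation preserves uniformity at the other scales, and this is a direct consequence of the uniformity of the original $A$. The only point that requires a touch of care is making sure that performing multiple prunings in succession does not interfere with each other's branching counts, but this is guaranteed because the sublemma only alters branching at the one scale being pruned.
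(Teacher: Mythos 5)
Your proof is correct and is precisely the ``collapsing'' argument that the paper attributes to \cite[Lemma 3.7]{Sh} and only sketches: select one child per parent at each scale $s \in \bigcup_{\sigma \in \mathcal{S}}\{\ell\sigma,\ldots,\ell(\sigma+1)-1\}$, and check that this leaves the branching at all other scales (and hence $(m,\ell N)$-uniformity) intact. You have simply written out in full the bookkeeping that the paper leaves to the reader, including the observation (also made in the paper's remark) that collapsing at $\sigma \in \mathcal{S}$ is equivalent to collapsing at the corresponding block of $m$-scales.
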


The lemma above is \cite[Lemma 3.7]{Sh}. To be accurate, the statement about $A'$ remaining $(m,\ell N)$-uniform is not part of the statement of \cite[Lemma 3.7]{Sh}, but the $3.8$-line proof quickly reveals that $(m,\ell N)$-uniformity is not violated when passing between $A$ and $A'$; the only point is to "collapse" all the branching of $A$ for levels corresponding to $\sigma \in \mathcal{S}$, or equivalently for $s \in \{\ell \sigma,\ell(\sigma + 1) - 1\}$ for all $\sigma \in \mathcal{S}$.
  
\subsection{Applying the inverse theorem} We start by fixing the following parameters:
\begin{equation}\label{allParameters} \begin{cases} \ell = \ell(\alpha,\beta,\gamma,\kappa) \in \N,\\ \eta = \eta(\alpha,\beta,\gamma,\kappa) \in (0,1), \\ m_{0} \in \N \text{ with } m_{0} \geq (40 + C_{0})/\eta. \end{cases}
\end{equation}
Here $C_{0} > 0$ is an absolute constant to be specified later. In fact, the values of all these constants will be specified later, but as indicated above, all of them only depend on $\alpha,\beta,\gamma,\kappa$. For the reader interested in seeing specific choices, we refer to \eqref{parameters} and the discussion afterwards. Recall that the set $B$ satisfies the Frostman condition $|B \cap B(x,r)| \leq r^{\kappa}|B|$ for all $\delta \leq r \leq \delta^{\epsilon_{0}}$, where we may freely choose $\epsilon_{0} = \epsilon_{0}(\alpha,\beta,\kappa,\gamma) > 0$. We choose
\begin{equation}\label{form58} \epsilon_{0} := \eta. \end{equation}
We will assume that $\eta,\epsilon,\epsilon_{B} < 1/1000$ in the sequel (but these upper bounds will generally not suffice). This section is devoted to the proof of the following proposition, whose proof will also finalise the choice of the parameters $\epsilon,\epsilon_{B} > 0$, relative to $\eta$:
\begin{proposition}\label{prop1} There exist $\epsilon,\epsilon_{B} > 0$ and $m \geq m_{0}$, depending on $\alpha,\beta,\gamma,\kappa$, such that the following holds for all $\delta \in 2^{-\N}$ of the form $\delta = 2^{-\ell m N}$, $N \in \N$. Assume that $A,B \subset [0,1]$ are $\delta$-sets satisfying the small doubling assumptions \eqref{counterAss}. Then there exist subsets $A' \subset A$ and $B' \subset B$ with the following properties:
\begin{enumerate}
\item $A'$ and $B'$ are $(m,\ell N)$-uniform with $|A'| \geq \delta^{\eta}|A|$ and $|B'| \geq \delta^{\eta/2}|B|$.
\item The pair $(A',B')$ is $(\eta,m,\ell N)$-polarised.
\item The set $B'$ is $(\eta/2,\ell m,N)$-polarised.
\end{enumerate}
\end{proposition}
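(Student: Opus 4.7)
The plan is to apply Shmerkin's inverse theorem (Theorem~\ref{shmerkin}) twice---first to the pair $(A,B)$ at scale $m$, then to $B_{0}$ alone at the coarser scale $\ell m$---and to merge the conclusions via the collapsing Lemma~\ref{ShLemma3}.

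First, apply Theorem~\ref{shmerkin} to the pair $(A,B)$ with polarisation parameter $\eta/4$ and initial scale $m_{0}$. This fixes a scale $m \geq m_{0}$ and a threshold $\epsilon_{1} = \epsilon_{1}(\eta,m) > 0$. Taking $\epsilon \leq \epsilon_{1}$ in the counter assumption \eqref{counterAss} yields $(m,\ell N)$-uniform subsets $A' \subset A$ and $B_{0} \subset B$ with $|A'| \geq \delta^{\eta/4}|A|$ and $|B_{0}| \geq \delta^{\eta/4}|B|$, such that $(A',B_{0})$ is $(\eta/4,m,\ell N)$-polarised---more than enough for property~(2). The set $A'$ is final; what remains is to refine $B_{0}$ into the desired $B'$.

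Next, apply Theorem~\ref{shmerkin} a second time with $A = B = B_{0}$, polarisation $\eta/4$, at the coarser scale $\ell m$, viewing $\delta = (2^{-\ell m})^{N}$. Since $B_{0} \subset B$, it inherits small doubling $|B_{0}+B_{0}| \leq \delta^{-\epsilon_{B}-\eta/4}|B_{0}|$; provided $\epsilon_{B}$ is small enough and $m_{0}$ is large enough relative to $\eta$ and $\ell$ (so that $\ell m$ clears Shmerkin's scale requirement), we obtain an $(\ell m, N)$-uniform set $\tilde{B} \subset B_{0}$ with $|\tilde{B}| \geq \delta^{\eta/4}|B_{0}|$ that is $(\eta/4,\ell m,N)$-polarised. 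This $\tilde{B}$ is not yet $B'$, because $(m,\ell N)$-uniformity may be lost; instead I use it to locate the ``bad'' scales of $B_{0}$. Define
\[
\mathcal{S} := \{\sigma \in [N] : R^{\ell m}_{B_{0}}(\sigma) < 2^{(1-\eta/2)\ell m}\}.
\]
For any $\sigma \in \mathcal{S}$, the polarisation of $\tilde{B}$ forces $R^{\ell m}_{\tilde{B}}(\sigma) = 1$, since $R^{\ell m}_{\tilde{B}}(\sigma) \leq R^{\ell m}_{B_{0}}(\sigma)$ already lies in the ``forbidden'' range. Writing $|\tilde{B}| = \prod_{\sigma} R^{\ell m}_{\tilde{B}}(\sigma)$ and dividing,
\[
\prod_{\sigma \in \mathcal{S}} R^{\ell m}_{B_{0}}(\sigma) \leq |B_{0}|/|\tilde{B}| \leq \delta^{-\eta/4}.
\]

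Finally, apply Lemma~\ref{ShLemma3} to the $(m,\ell N)$-uniform set $B_{0}$ with scale family $\mathcal{S}$: this produces $B' \subset B_{0}$ that remains $(m,\ell N)$-uniform and has $R^{\ell m}_{B'}(\sigma) = 1$ for $\sigma \in \mathcal{S}$, $R^{\ell m}_{B'}(\sigma) = R^{\ell m}_{B_{0}}(\sigma) \geq 2^{(1-\eta/2)\ell m}$ otherwise. Thus $B'$ is $(\eta/2,\ell m,N)$-polarised, which is property~(3). The pair $(A',B')$ inherits $(\eta/4,m,\ell N)$-polarisation, and hence $(\eta,m,\ell N)$-polarisation, from $(A',B_{0})$: if $R^{m}_{B'}(s) > 1$ then $R^{m}_{B_{0}}(s) > 1$ since $B' \subset B_{0}$, and the first step gives $R^{m}_{A'}(s) \geq 2^{(1-\eta/4)m}$. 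The conclusion of Lemma~\ref{ShLemma3} combined with the previous display yields $|B'| \geq \delta^{\eta/4}|B_{0}| \geq \delta^{\eta/2}|B|$, which is property~(1). The main obstacle is matching parameter thresholds so that both applications of Theorem~\ref{shmerkin} fire simultaneously, and so that the collapsing can be performed at $\mathcal{S} \subset [N]$ without destroying $(m,\ell N)$-uniformity---arrangements comfortably accommodated by the framework \eqref{allParameters}.
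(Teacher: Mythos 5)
Your overall strategy --- two applications of Shmerkin's inverse theorem at the two scale partitions, followed by the collapsing Lemma \ref{ShLemma3} to repair the coarse-scale polarisation --- is the same as the paper's, and your repair step (using the auxiliary set $\tilde{B}$ to identify the bad scales $\mathcal{S}$ of $B_{0}$ and collapsing $B_{0}$ there) is a clean variant of the paper's $\mathcal{S}_{\mathrm{bad}}$ bookkeeping. However, the \emph{order} in which you apply Theorem \ref{shmerkin} creates a genuine gap. Your second application, to the pair $(B_{0},B_{0})$ at scale $\ell m$, requires $|B_{0}+B_{0}| \leq \delta^{-\epsilon_{2}}|B_{0}|$ where $\epsilon_{2} = \epsilon(\eta/4,\ell m)$ is the threshold supplied by the theorem. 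The best bound you have is $|B_{0}+B_{0}| \leq |B+B| \leq \delta^{-\epsilon_{B}}|B| \leq \delta^{-\epsilon_{B}-\eta/4}|B_{0}|$, and the $\eta/4$ in the exponent comes from the cardinality loss $|B_{0}| \geq \delta^{\eta/4}|B|$ in the first application --- it is not removable by shrinking $\epsilon_{B}$. Since in an inverse theorem the admissible doubling exponent $\epsilon(\eta,m)$ is necessarily far smaller than the structural error $\eta$ (otherwise the theorem would self-improve), the requirement $\epsilon_{B}+\eta/4 \leq \epsilon_{2}$ fails. Nor can you fix this by running the first application with a tiny polarisation parameter $\eta' \leq \epsilon_{2}$: that makes $\eta'$ depend on $m$ through $\epsilon(\eta/4,\ell m)$, while the theorem demands $m \geq m(\eta')$, a circular dependence.

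The paper resolves this by reversing the order: it first applies Theorem \ref{shmerkin} to $(B,B)$ at the coarse scale $\ell m$, where the hypothesis is exactly $|B+B| \leq \delta^{-\epsilon_{B}}|B|$ with $\epsilon_{B}$ freely choosable below the theorem's threshold; this produces the coarsely polarised $B'$ and fixes $\epsilon_{B}$. Only then does it apply the theorem to $(A,B')$ at scale $m$ --- here passing to the subset works in your favour, since $|A+B'| \leq |A+B| \leq \delta^{-\epsilon}|A|$ with the \emph{same} $A$ on the right and hence no loss in the doubling exponent. The coarse polarisation damaged in this second step is then restored by exactly the collapsing argument you describe. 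So your proof becomes correct once the two applications are swapped and the repair is performed on the fine-scale output rather than on $B_{0}$.
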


\begin{remark} In the sequel, we will always work with scales of the form $\delta = 2^{-\ell m N}$ with the fixed parameters $\ell,m$, which depend on $\alpha,\beta,\gamma,\kappa$. In other words, we initially prove Theorem \ref{mainTechnical4} (and find the constants $\epsilon,\epsilon_{0},\epsilon_{B}$) for only scales of this special form. After this has been accomplished, it is easy to check that the case of general scales $\delta \in 2^{-\N}$ is a corollary, assuming that the upper bound $\delta_{0} = \delta_{0}(\alpha,\beta,\gamma,\kappa) > 0$ for $\delta$ is sufficiently small. The reason is that if $\delta \in 2^{-\N}$ is arbitrary, then there exists a scale of the form $\bar{\delta} = 2^{-\ell m N}$ with $\delta \leq \bar{\delta} \lesssim_{\alpha,\beta,\gamma,\kappa} \delta$. We leave the rest of this reduction to the reader.

As another remark, we will later in the paper need to assume that $\epsilon = \epsilon(\alpha,\gamma) > 0$ is sufficiently small that
\begin{equation}\label{form98} \frac{\epsilon}{1 - \alpha - \epsilon} \leq \frac{\gamma}{2}. \end{equation}
This requirement should be combined with the one coming from Proposition \ref{prop1}.  \end{remark}

\begin{proof}[Proof of Proposition \ref{prop1}] We begin by applying Theorem \ref{shmerkin} with constant $\eta^{3} > 0$ to the pair $(B,B)$, for which we assumed in \eqref{counterAss} that $|B + B| \leq \delta^{-\epsilon_{B}}|B|$. Assume that $m \geq m(\eta^{3}) \in \N$ is sufficiently large that Theorem \ref{shmerkin} applies. Assume additionally that $m \geq m_{0}$, where $m_{0}$ is the constant from \eqref{allParameters}. Then, if $\epsilon_{B} = \epsilon_{B}(\eta^{3},\ell m) = \epsilon_{B}(\alpha,\beta,\gamma,\kappa) > 0$ and $\delta = (2^{-\ell m})^{N}$ are sufficiently small, we find an $(\ell m,N)$-uniform subset $B' \subset B$ such that $|B'| \geq \delta^{\eta^{3}}|B|$, and $B'$ is $(\eta^{3},\ell m,N)$-polarised. \emph{We have now fixed the value of the parameter $\epsilon_{B} > 0$ in \eqref{allParameters} (and hence in Theorem \ref{mainTechnical4})!}

Next, note that $|A + B'| \leq |A + B| \leq \delta^{-\epsilon}|A|$. We therefore may apply Theorem \ref{shmerkin} again to the pair $(A,B')$, again with parameter $\eta^{3} > 0$. If $\epsilon = \epsilon(\eta^{3},m) > 0$ is sufficiently small, we find an $(m,\ell N)$-uniform subset $A' \subset A$ with $|A'| \geq \delta^{\eta^{3}}|A| \geq \delta^{\eta}|A|$, and an $(m,\ell N)$-uniform subset $B'' \subset B'$ such that 
\begin{equation}\label{form66} |B''| \geq \delta^{\eta^{3}}|B'|, \end{equation}
and  $(A',B'')$ is $(\eta^{3},m,\ell N)$-polarised. In particular $(A',B'')$ is $(\eta,m,\ell N)$-polarised. \emph{We have now fixed the value of the parameter $\epsilon > 0$ in \eqref{counterAss}!}

Are we done with properties (1)-(3) in Proposition \ref{prop1}? Not quite: while passing from $B'$ to $B''$, we might have lost the $(\eta^{3},\ell m,N)$-polarisation of $B'$. The plan will be to pass to a final $(m,\ell N)$-uniform subset $B''' \subset B''$ which is $(\eta/2,\ell m,N)$-polarised, and such that $|B'''| \geq \delta^{\eta/4}|B''|$. Then finally
\begin{displaymath} |B'''| \geq \delta^{\eta/4}|B''| \geq \delta^{\eta/4 + \eta^{3}}|B'| \geq \delta^{\eta/4 + 2\eta^{3}}|B| \geq \delta^{\eta/2}|B|. \end{displaymath}
Also $(A',B''')$ remains $(\eta,m,\ell N)$-polarised, since this property is not violated by replacing $B''$ by an $(m,\ell N)$-uniform subset, for example $B'''$.

Write 
\begin{displaymath} \mathcal{S}_{0} := \{\sigma \in [N] : R_{B'}^{\ell m}(\sigma) = 1\} \quad \text{and} \quad \mathcal{S}_{1} := \{\sigma \in [N] : R_{B'}^{\ell m}(\sigma) \geq 2^{(1 - \eta^{3})\ell m}\}. \end{displaymath} 
Since $B'$ was constructed to be $(\eta^{3},\ell m,N)$-polarised, we have $[N] = \mathcal{S}_{0} \cup \mathcal{S}_{1}$, and
\begin{displaymath} |B'| = \prod_{\sigma \in \mathcal{S}_{1}} R_{B'}^{\ell m}(\sigma) \geq 2^{(1 - \eta^{3})\ell m |\mathcal{S}_{1}|}. \end{displaymath}
Now, let $\mathcal{S}_{\mathrm{bad}} := \{\sigma \in \mathcal{S}_{1} : R_{B''}^{\ell m}(\sigma) < 2^{(1 - \eta/2)\ell m}\}$, and $\mathcal{S}_{\mathrm{good}} := \mathcal{S}_{1} \, \setminus \, \mathcal{S}_{\mathrm{bad}}$. (Note that the numbers $R_{B''}^{\ell m}(\sigma)$ are well-defined, since $B''$ is $(m,\ell N)$-uniform, hence $(\ell m,N)$-uniform.) Then, since evidently $R_{B''}^{\ell m}(\sigma) \leq R_{B'}^{\ell m}(\sigma) = 1$ for all $\sigma \in \mathcal{S}_{0}$, we have
\begin{align*} |B''| = \prod_{\sigma \in \mathcal{S}_{\mathrm{bad}}} R_{B''}^{\ell m}(\sigma) \cdot \prod_{\sigma \in \mathcal{S}_{\mathrm{good}}} R_{B''}^{\ell m}(\sigma) & \leq 2^{(1 - \eta/2)\ell m |\mathcal{S}_{\mathrm{bad}}|} \cdot 2^{\ell m (|\mathcal{S}_{1}| - |\mathcal{S}_{\mathrm{bad}}|)}\\
& = 2^{\ell m |\mathcal{S}_{1}| - (\eta/2) \ell m |\mathcal{S}_{\mathrm{bad}}|}. \end{align*}
On the other hand, recalling that $\delta = 2^{\ell m N}$, we have
\begin{displaymath} |B''| \stackrel{\eqref{form66}}{\geq} \delta^{\eta^{3}}|B'| = \delta^{\eta^{3}} \cdot \prod_{\sigma \in \mathcal{S}_{1}} R_{B'}^{\ell m}(\sigma) \geq 2^{-\eta^{3}\ell m N + (1 - \eta^{3})\ell m|\mathcal{S}_{1}|}. \end{displaymath}
Combining these inequalities and dividing both sides by $2^{\ell m |\mathcal{S}_{1}|}$ yields
\begin{displaymath} 2^{-(\eta/2) \ell m |\mathcal{S}_{\mathrm{bad}}|} \geq 2^{-\eta^{3}\ell m N - \eta^{3} \ell m |\mathcal{S}_{1}|} \quad \Longrightarrow \quad |\mathcal{S}_{\mathrm{bad}}| \leq 2\eta^{2} N + 2\eta^{2} |\mathcal{S}_{1}| \leq (\eta/4) N, \end{displaymath}
noting that $16\eta^{2} \leq \eta$ since $\eta < 1/1000$. At this point, we simply apply the "collapsing" Lemma \ref{ShLemma3} to the the $(m,\ell N)$-uniform set $B''$, and the set of scales $\mathcal{S} := \mathcal{S}_{\mathrm{bad}}$. The result is an $(m,\ell N)$-uniform subset $B''' \subset B''$, which is also $(\ell m,N,R_{B'''}^{\ell m})$-uniform, with
\begin{displaymath} R_{B'''}^{\ell m}(\sigma) = \begin{cases} R_{B''}^{\ell m}(\sigma), & \sigma \notin \mathcal{S}_{\mathrm{bad}}, \\ 1, & \sigma \in \mathcal{S}_{\mathrm{bad}}. \end{cases}, \end{displaymath}
and
\begin{displaymath} |B'''| \geq |B''| \cdot \prod_{\sigma \in \mathcal{S}_{\mathrm{bad}}} R_{B''}^{\ell m}(\sigma)^{-1} \geq |B''| \cdot 2^{-\ell m |\mathcal{S}_{\mathrm{bad}}|} \geq \delta^{\eta/4} \cdot |B''|. \end{displaymath}
Then $B'''$ is $(\eta/2,\ell m,N)$-polarised, since if $R_{B'''}^{\ell m}(\sigma) > 1$, then necessarily $\sigma \in \mathcal{S}_{\mathrm{good}}$, hence $R_{B'''}^{\ell m}(\sigma) = R_{B''}^{\ell m}(\sigma) \geq 2^{(1 - \eta/2)\ell m}$ by definition. Now the pair of sets $A',B'''$ (in place of $A',B'$) satisfies all the requirements (1)-(3) in Proposition \ref{prop1}. \end{proof}

\subsection{Pruning $B'$ to improve separation I} It will be useful to reduce $B'$ to a further subset, in order to gain a little extra separation. We prove the following proposition:
\begin{proposition}\label{prop2} Let $B' \subset B$ be the $(m,\ell N)$-uniform $(\eta/2,\ell m,N)$-polarised set found in Proposition \ref{prop1}. Then, there exists an $(m,\ell N)$-uniform $(\eta,\ell m,N)$-polarised subset $B'' \subset B'$ with $|B''| \geq \delta^{\eta/2}|B'|$, and which satisfies the following separation property:
\begin{equation}\label{form8} s \in [\ell N], \, I_{1},I_{2} \in \mathcal{I}_{ms}(B''), \, I_{1} \neq I_{2} \quad \Longrightarrow \quad \dist(I_{1},I_{2}) \geq 2^{-ms}. \end{equation}
\end{proposition}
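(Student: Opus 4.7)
Plan. The main idea is to build $B''$ by a top-down greedy thinning of $B'$ that, at each scale $s \geq 1$, halves the number of $B'$-children of every surviving parent by keeping every other child in the natural left-to-right order. The key structural point is that within-parent separation is all that is really needed, because the global separation \eqref{form8} at scale $ms$ then follows by induction on $s$: if $I_{1},I_{2} \in \mathcal{I}_{ms}(B'')$ lie in distinct parents $J_{1},J_{2} \in \mathcal{I}_{m(s-1)}(B'')$, induction at the coarser scale already yields $\dist(J_{1},J_{2}) \geq 2^{-m(s-1)} \geq 2^{-ms}$, while if they share a parent the independence of the retained children closes the case.

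Concretely, I would define the families $\mathcal{I}_{ms}(B'') \subset \mathcal{I}_{ms}(B')$ inductively, starting from $\mathcal{I}_{0}(B'') := \{[0,1)\}$. Given $\mathcal{I}_{m(s-1)}(B'')$, for every surviving parent $J$ I list its $B'$-children as $I^{J}_{1},\ldots,I^{J}_{R}$ in increasing order; here $R = R^{m}_{B'}(s-1)$ is independent of $J$ by the $(m,\ell N)$-uniformity of $B'$. I then retain only the odd-indexed ones $I^{J}_{1},I^{J}_{3},I^{J}_{5},\ldots$, obtaining exactly $\lceil R/2\rceil$ survivors per parent. Any two of these retained children have a skipped $B'$-child of $J$ strictly between them, so their dyadic positions inside $J$ differ by at least $2$, giving within-parent separation. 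Since the same count is kept under every parent, $B''$ stays $(m,\ell N)$-uniform; finally set $B'' := \bigcup_{I \in \mathcal{I}_{\ell m N}(B'')} (B' \cap I) \subset B'$.

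A single ceiling computation then delivers both the polarisation and the cardinality bound. For any $\sigma \in [N]$,
\[ R_{B''}^{\ell m}(\sigma) \;=\; \prod_{s = \ell \sigma}^{\ell(\sigma + 1)-1} \lceil R_{B'}^{m}(s)/2 \rceil. \]
If $R^{\ell m}_{B'}(\sigma) = 1$ every factor equals $1$; otherwise the $(\eta/2,\ell m,N)$-polarisation of $B'$ supplies $R^{\ell m}_{B'}(\sigma) \geq 2^{(1 - \eta/2)\ell m}$, and since each ceiling costs at most a factor of $2$,
\[ R_{B''}^{\ell m}(\sigma) \;\geq\; 2^{-\ell} R_{B'}^{\ell m}(\sigma) \;\geq\; 2^{(1 - \eta/2 - 1/m)\ell m} \;\geq\; 2^{(1 - \eta)\ell m}, \]
using $m \geq m_{0} \geq 2/\eta$ from \eqref{allParameters}. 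The same factor-of-$2$ loss telescoped across all of $[\ell N]$ yields $|B''| \geq 2^{-\ell N}|B'| = \delta^{1/m}|B'| \geq \delta^{\eta/2}|B'|$.

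I do not expect any serious obstacle here: the construction is essentially combinatorial, and the two nontrivial checks — that any $R$-subset of $\{0,\ldots,2^{m}-1\}$ admits an independent subset of size exactly $\lceil R/2 \rceil$ (immediate from the "take every other element" rule applied to the ordered list), and that the slack $1/m \leq \eta/2$ is available — are both granted by the parameter choice in \eqref{allParameters}. The only thing worth double-checking is the book-keeping that the inductive $\mathcal{I}_{ms}(B'')$ actually coincides with $\{I \in \mathcal{D}_{ms} : I \cap B'' \neq \emptyset\}$, which holds because each parent retains at least one child at every step.
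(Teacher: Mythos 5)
Your proposal is correct and follows essentially the same route as the paper: a top-down thinning that keeps roughly every other child of each surviving parent, loses at most a factor of $2$ per scale (hence $2^{-\ell}$ per $\ell m$-block and $2^{-\ell N} = \delta^{1/m} \geq \delta^{\eta/2}$ in total), and then absorbs the loss into the polarisation exponent via $1/m \leq \eta/2$. Your odd-index rule is a slightly cleaner way of guaranteeing that the retained count is the same under every parent (the paper instead deletes "extra" children to equalise the counts), but the argument is otherwise identical.
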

\begin{proof} We perform a straightforward "top down" reduction of $B'$. At scale $s = 0$, there is only one interval $[0,1) \in \mathcal{I}_{0}(B')$, so \eqref{form8} is clear. At scale $s = 1$, remove $I \cap B'$ from $B'$ for at most every second interval $I \in \mathcal{I}_{m}(B')$. This results in a new set $B_{1}' \subset B'$ with $|B_{1}'| \geq \tfrac{1}{2}|B'|$ points, and 
\begin{displaymath} R_{B_{1}'}^{m}(0) := |B_{1}'|_{2^{-m}} \geq \tfrac{1}{2}R_{B'}^{m}(0). \end{displaymath}
Next, for $s = 2$, consider every (remaining) interval in $\mathcal{I}_{m}(B_{1}')$. For each $I \in \mathcal{I}_{m}(B_{1}')$, at most $\tfrac{1}{2}$ of the intervals from $\mathcal{I}_{2m}(B_{1}')$ contained in $I$ need removal to ensure \eqref{form8} at level $s = 2$. However, the (minimal) number may vary depending on the choice of $I \in \mathcal{I}_{m}(B_{1}')$. Fortunately, by removing some extra $\mathcal{I}_{2m}(B_{1}')$-intervals if necessary, we may make the number independent of $I \in \mathcal{I}_{m}(B_{1}')$. This way, the number of remaining points in $B_{1}'$ again gets reduced by at most a factor of $\tfrac{1}{2}$. For the remaining points, say $B_{2}'$, the new branching numbers $R^{m}_{B_{2}'}(2) := |B_{2}' \cap I|_{2^{-2m}}$ are independent of the choice of $I \in \mathcal{I}_{m}(B_{2}')$, and of course $R_{B_{2}'}^{m}(0) = R^{m}_{B_{1}'}(0)$ (in general, the removal process at level $s$ never alters the branching numbers at levels $< s - 1$). Hence $B_{2}'$ is again $(m,\ell N)$-uniform.

Once the deletion process has been executed successively at all levels $s \in \{1,\ldots,\ell N\}$, the remaining set $B'' := B_{N}'$ satisfies 
\begin{displaymath} |B''| \geq 2^{-\ell N}|B'| = \delta^{1/m} \cdot |B'| \geq \delta^{\eta/2} \cdot |B'|, \end{displaymath} 
recalling that $m \geq m_{0} \geq 2/\eta \geq 1/\eta$ by Proposition \ref{prop1} and \eqref{allParameters}. Also, $B''$ remains $(m,\ell N)$-uniform, with $R_{B''}^{m}(s) \geq \tfrac{1}{2}R_{B'}^{m}(s)$ for $s \in [\ell N]$. This implies that if $R_{B''}^{\ell m}(\sigma) > 1$, then
\begin{displaymath} R_{B''}^{\ell m}(\sigma) \geq 2^{-\ell} R_{B'}^{\ell m}(\sigma) \geq 2^{-\ell + (1 - \eta/2)\ell m} = 2^{(1 - \eta/2 - 1/m)\ell m} \geq 2^{(1 - \eta)\ell m}, \end{displaymath}
by the $(\eta/2,\ell m,N)$-polarisation of $B'$, and using that $m \geq m_{0} \geq 2/\eta$, recall \eqref{allParameters}. In other words, $B''$ is $(\eta,\ell m,N)$-polarised, as claimed. \end{proof}

Since $|B'| \geq \delta^{\eta/2}|B|$ by Proposition \ref{prop2}, we have $|B''| \geq \delta^{\eta/2}|B'| \geq \delta^{\eta}|B|$. Also, since $(A',B')$ is $(\eta,m,\ell N)$-polarised, the same is true of $(A',B'')$. To simplify notation, we will remove one prime, that is, assume that $B'$ already satisfies the separation property \eqref{form8} constructed for $B''$ in the proposition above. To summarise the progress so far:
\begin{itemize} 
\item $|A'| \geq \delta^{\eta}|A|$ and $|B'| \geq \delta^{\eta}|B|$,
\item $(A',B')$ is $(\eta,m,\ell N)$-polarised and $B'$ is $(\eta,\ell m,N)$-polarised,
\item $B'$ satisfies the separation property \eqref{form8}.
\end{itemize}

\subsection{Intervals with small but non-zero $B'$-branching}\label{s:combining} For intervals $\mathcal{I} \subset [\ell N]$ and $\mathcal{J} \subset [N]$, and for $D' \in \{A',B'\}$, we define
\begin{displaymath} R_{D'}^{m}(\mathcal{I}) := \prod_{s \in \mathcal{I}} R_{D'}^{m}(s) \quad \text{and} \quad R_{D'}^{\ell m}(\mathcal{J}) := \prod_{\sigma \in \mathcal{J}} R_{D'}^{\ell m}(\sigma). \end{displaymath}
We also define $\mathcal{N}_{B'}$ to consist of the maximal intervals $\mathcal{I} \subset [N]$ such that $R_{B'}^{\ell m}(\mathcal{I}) = 1$. Thus the intervals in $\mathcal{N}_{B'}$ partition the set $\{\sigma \in [N] : R_{B'}^{\ell m}(\sigma) = 1\}$.

\begin{lemma}\label{lemma1} Write $|B| := \delta^{-\beta_{1}}$, where $\beta_{1} \geq \beta$ by the hypothesis of Theorem \ref{mainTechnical4}. Then the following lower bound holds for the total length of the intervals in $\mathcal{N}_{B'}$:
\begin{displaymath} \sum_{\mathcal{I} \in \mathcal{N}_{B'}} |\mathcal{I}| \geq (1 - \tfrac{\beta_{1}}{1 - \eta}) \cdot N. \end{displaymath} 
\end{lemma}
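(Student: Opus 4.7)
The plan is to directly bound $|B'|$ from below by multiplying branching numbers over the scales in $[N]$, and then compare with the trivial upper bound $|B'| \leq |B| = \delta^{-\beta_{1}}$. Concretely, since $B'$ is $(m,\ell N)$-uniform, it is in particular $(\ell m,N)$-uniform, so
\begin{displaymath} |B'| = \prod_{\sigma \in [N]} R_{B'}^{\ell m}(\sigma). \end{displaymath}
Let $\mathcal{M}_{B'} := \{\sigma \in [N] : R_{B'}^{\ell m}(\sigma) > 1\}$ denote the set of "nontrivial" scales, which is exactly the complement (in $[N]$) of the union of the intervals in $\mathcal{N}_{B'}$. Hence
\begin{displaymath} \sum_{\mathcal{I} \in \mathcal{N}_{B'}} |\mathcal{I}| = N - |\mathcal{M}_{B'}|, \end{displaymath}
so the lemma reduces to showing $|\mathcal{M}_{B'}| \leq \beta_{1}N/(1 - \eta)$.

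Now I would invoke the $(\eta,\ell m,N)$-polarisation of $B'$ (established in Proposition \ref{prop2}), which says that for every $\sigma \in \mathcal{M}_{B'}$ we automatically have $R_{B'}^{\ell m}(\sigma) \geq 2^{(1 - \eta)\ell m}$. Multiplying these lower bounds over $\sigma \in \mathcal{M}_{B'}$, and noting that the remaining factors equal $1$, yields
\begin{displaymath} |B'| = \prod_{\sigma \in \mathcal{M}_{B'}} R_{B'}^{\ell m}(\sigma) \geq 2^{(1 - \eta)\ell m \cdot |\mathcal{M}_{B'}|}. \end{displaymath}

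On the other hand, trivially $|B'| \leq |B| = \delta^{-\beta_{1}} = 2^{\beta_{1}\ell m N}$ since $\delta = 2^{-\ell m N}$. Combining these two estimates and taking logarithms base $2$ gives $(1 - \eta)\ell m \cdot |\mathcal{M}_{B'}| \leq \beta_{1} \ell m N$, i.e. $|\mathcal{M}_{B'}| \leq \beta_{1}N/(1 - \eta)$, which is what we needed. I don't anticipate any real obstacle here: the lemma is essentially a book-keeping calculation that extracts, from the $(\eta,\ell m,N)$-polarisation of $B'$ and the cardinality upper bound $|B'| \leq \delta^{-\beta_{1}}$, the fact that $B'$ cannot afford to branch at too many scales.
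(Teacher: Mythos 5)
Your proof is correct and is essentially identical to the paper's argument: both identify the set of scales $\sigma \in [N]$ with $R_{B'}^{\ell m}(\sigma) > 1$ as the complement of $\cup \mathcal{N}_{B'}$, lower bound the branching there by $2^{(1-\eta)\ell m}$ via the $(\eta,\ell m,N)$-polarisation of $B'$, and compare the resulting product with $|B'| \leq |B| = \delta^{-\beta_{1}}$. No gaps.
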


\begin{remark}\label{rem3} We remark that $\beta_{1} \leq \alpha + \epsilon < 1$ (recall \eqref{form98} for the second inequality). Otherwise $|A + B| \geq |B| = \delta^{-\beta_{1}} > \delta^{-\epsilon}|A|$, violating our counter assumption \eqref{counterAss}. Therefore $1 - \beta_{1} > 0$, which will be (tacitly) needed several times below. \end{remark}

\begin{proof} Let $N_{B'} := \cup \mathcal{N}_{B'} \subset [N]$. Note that if $\sigma \in [N] \, \setminus \, N_{B'}$, then $R_{B'}^{\ell m}(\sigma) > 1$, and consequently $R_{B'}^{\ell m}(\sigma) \geq 2^{(1 - \eta)\ell m}$, by the $(\eta,\ell m,N)$-polarisation of $B'$. It follows that
\begin{displaymath} 2^{\beta_{1}\ell m N} = \delta^{-\beta_{1}} \geq |B'| = \prod_{\sigma \in [N] \, \setminus \, N_{B'}} R_{B'}^{\ell m}(\sigma) \geq 2^{(1 - \eta) \ell m \cdot (N - |N_{B'}|)}. \end{displaymath}
Consequently $N - |N_{B'}| \leq (\beta_{1}/(1 - \eta)) \cdot N$, and finally
\begin{displaymath} |N_{B'}| \geq N - \tfrac{\beta_{1}}{1 - \eta} \cdot N = (1 - \tfrac{\beta_{1}}{1 - \eta}) \cdot N, \end{displaymath}
as claimed.  \end{proof}

We would next like to extend the intervals $\mathcal{I} \in \mathcal{N}_{B'}$ to the left in such a manner that the $B'$-branching numbers of the extended intervals are relatively small, but not vanishingly small; say, we keep extending left until the extension $\overline{\mathcal{I}} \supset \mathcal{I}$ satisfies $R_{B'}^{\ell m}(\overline{\mathcal{I}}) \sim 2^{\zeta \ell m|\overline{\mathcal{I}}|}$ for some small parameter $\zeta > 0$. Unfortunately, this is not always possible: consider for example a scenario where the elements in $\mathcal{N}_{B'}$ are singletons. Then, as soon as an interval (a singleton) $\mathcal{I} \in \mathcal{N}_{B'}$ is extended to the left by a single element $\sigma \in [N]$, we have $R_{B'}^{\ell m}(\sigma) \approx 2^{\ell m}$ by the polarisation of $B'$, and hence
\begin{displaymath} R_{B'}^{\ell m}(\mathcal{I} \cup \{\sigma\}) = R_{B'}^{\ell m}(\sigma) \approx 2^{\ell m} = 2^{\ell m |\mathcal{I} \cup \{\sigma\}|/2}. \end{displaymath}
The right hand side is far too large for our purposes. This issue is not possible to overcome as long as we are fixated with the single scale partition $[N]$, and here arises the need to play with the two different scale partitions $[N]$ and $[\ell N]$.

The solution is to identify every interval $\mathcal{I} \in \mathcal{N}_{B'}$ with another, $\ell$-times longer, sub-interval of $[\ell N]$. More precisely, for every $\mathcal{I} = \{\sigma,\ldots,\tau\} \subset [N]$, we define the interval 
\begin{displaymath} \ell \mathcal{I} := \{\ell \sigma,\ell \sigma + 1,\ldots,\ell (\tau + 1) - 1\} \subset [\ell N]. \end{displaymath}
Thus, for example $\ell \{0\} = [\ell]$, and $\ell [N] = [\ell N]$. It is easy to check that the branching numbers interact with this operation as follows:
\begin{displaymath} R^{m}_{D'}(\ell \mathcal{I}) = R^{\ell m}_{D'}(\mathcal{I}), \qquad D' \in \{A',B'\}, \, \mathcal{I} \subset [N]. \end{displaymath}
In particular:
\begin{equation}\label{form28} \mathcal{I} \in \mathcal{N}_{B'} \quad \Longrightarrow \quad R_{B'}^{\ell m}(\mathcal{I}) = 1 \quad \Longrightarrow \quad R_{B'}^{m}(\ell \mathcal{I}) = 1. \end{equation}
Motivated by this observation, we define 
\begin{displaymath} \ell \mathcal{N}_{B'} := \{\ell \mathcal{I} : \mathcal{I} \in \mathcal{N}_{B'}\}. \end{displaymath}
The intervals in $\mathcal{N}_{B'}$ partition $\{\sigma \in [N] : R_{B'}^{\ell m}(\sigma) = 1\}$ by definition. The intervals in $\ell \mathcal{N}_{B'}$ are contained in the set $\{s \in [\ell N] : R_{B'}^{m}(s) = 1\}$ by \eqref{form28}, but may not cover it. However, we may infer the following lower bound for their total length form Lemma \ref{lemma1}:
\begin{cor}\label{cor2} The following lower bound holds for the total length of the intervals in $\mathcal{N}_{B'}^{m}$:
\begin{displaymath} \sum_{\mathcal{J} \in \ell \mathcal{N}_{B'}} |\mathcal{J}| \geq (1 - \tfrac{\beta_{1}}{1 - \eta}) \cdot \ell N. \end{displaymath}
\end{cor}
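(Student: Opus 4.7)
The plan is almost mechanical: the corollary follows immediately from Lemma \ref{lemma1} together with the obvious fact that the map $\mathcal{I} \mapsto \ell \mathcal{I}$ multiplies lengths by $\ell$.

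More precisely, first I would note that by the very definition of $\ell \mathcal{I} = \{\ell \sigma, \ell \sigma + 1, \ldots, \ell(\tau+1) - 1\}$ for $\mathcal{I} = \{\sigma, \ldots, \tau\} \subset [N]$, one has
\begin{displaymath}
|\ell \mathcal{I}| = \ell(\tau + 1) - \ell \sigma = \ell(\tau - \sigma + 1) = \ell \cdot |\mathcal{I}|.
\end{displaymath}
Also, distinct intervals $\mathcal{I}_1, \mathcal{I}_2 \in \mathcal{N}_{B'}$ give rise to distinct intervals $\ell \mathcal{I}_1, \ell \mathcal{I}_2 \in \ell \mathcal{N}_{B'}$ (in fact, the map $\mathcal{I} \mapsto \ell \mathcal{I}$ is a bijection between $\mathcal{N}_{B'}$ and $\ell \mathcal{N}_{B'}$ by its definition).

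Then I would simply compute, using Lemma \ref{lemma1}:
\begin{displaymath}
\sum_{\mathcal{J} \in \ell \mathcal{N}_{B'}} |\mathcal{J}| = \sum_{\mathcal{I} \in \mathcal{N}_{B'}} |\ell \mathcal{I}| = \ell \cdot \sum_{\mathcal{I} \in \mathcal{N}_{B'}} |\mathcal{I}| \geq \ell \cdot \left(1 - \tfrac{\beta_{1}}{1 - \eta}\right) \cdot N,
\end{displaymath}
which is the claimed inequality. There is no real obstacle here — the corollary is a bookkeeping consequence of the rescaling $\mathcal{I} \leadsto \ell \mathcal{I}$, which simply stretches total length by the factor $\ell$, and the purpose of stating it separately is presumably to have a clean reference for later use in the main argument.
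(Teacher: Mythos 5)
Your proof is correct and matches the paper's own argument, which likewise combines Lemma \ref{lemma1} with the identity $|\ell \mathcal{I}| = \ell|\mathcal{I}|$ and the injectivity (disjointness-preservation) of $\mathcal{I} \mapsto \ell\mathcal{I}$. Nothing is missing.
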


\begin{proof} This follows immediately from Lemma \ref{lemma1} and the relation $|\ell \mathcal{I}| = \ell |\mathcal{I}|$ (noting also that if $\mathcal{I}_{1},\mathcal{I}_{2} \subset [N]$ with $\mathcal{I}_{1} \cap \mathcal{I}_{2} = \emptyset$, then $\ell \mathcal{I}_{1} \cap \ell \mathcal{I}_{2} = \emptyset$). \end{proof}

Next, instead of extending the intervals in $\mathcal{N}_{B'}$ to the left, as we first proposed, we do this to the intervals in $\ell \mathcal{N}_{B'}$. More precisely, fix a small parameter
\begin{equation}\label{zeta} \zeta = \zeta(\alpha,\beta,\gamma,\kappa) > 0 \quad \text{with} \quad \eta \ll_{\alpha,\beta,\gamma,\kappa} \zeta < \kappa/2. \end{equation}
It will eventually turn out that $\eta,\zeta > 0$ will need to be chosen so small that
\begin{equation}\label{parameters} \left(1 - \tfrac{\beta_{1}}{1 - \eta} - \tfrac{2\eta}{\kappa}\right) - \left(\tfrac{\alpha - (1 - \eta)(\beta_{1} - \eta - 2\zeta)}{\Gamma}\right) \geq \tfrac{1}{2} \cdot [(1 - \beta_{1}) - (\alpha - \beta_{1})/\Gamma], \end{equation}
where $(\alpha - \beta)/(1 - \beta) < \Gamma < \gamma$ is a constant to be fixed in \eqref{gammaA}, which satisfies $\Gamma \geq \gamma/2$. It might look suspicious that the requirement in \eqref{parameters} depends on $\beta_{1}$, and not just $\alpha,\beta,\kappa,\gamma$. To see that this is not a problem, denote the left hand side $L(\beta_{1})$ and the right hand side $\tfrac{1}{2} \cdot R(\beta_{1})$. With this notation, one can easily check that
\begin{displaymath} \left| L(\beta_{1}) - R(\beta_{1}) \right| \lesssim \frac{\eta}{\kappa} + \frac{\eta + \zeta}{\gamma} \quad \text{and} \quad R(\beta_{1}) \geq R(\beta) \gtrsim_{\alpha,\beta,\gamma} 1.  \end{displaymath}
The lower bound on $R(\beta)$ follows from $\Gamma > (\alpha - \beta)/(1 - \beta)$. From these estimates, one sees that if $\eta,\zeta > 0$ are chosen small enough depending only on $\alpha,\beta,\gamma,\kappa$, then $|L(\beta_{1}) - R(\beta_{1})| \leq \tfrac{1}{2}R(\beta) \leq \tfrac{1}{2}R(\beta_{1})$, and hence $L(\beta_{1}) \geq \tfrac{1}{2} \cdot R(\beta_{1})$, as in \eqref{parameters}.

In addition to the constraint in \eqref{parameters}, we will finally (after \eqref{form50}) need to take $\eta$ sufficiently small in terms of $\zeta(\gamma - \Gamma)/4$. Of course \eqref{parameters} is compatible with such a constraint.

Enumerate $\ell \mathcal{N}_{B'} = \{\mathcal{I}_{1},\mathcal{I}_{2},\ldots,\mathcal{I}_{k}\}$, where $\max \mathcal{I}_{j} < \min \mathcal{I}_{j + 1}$. Start with $\mathcal{I}_{k}$, and recall that $R_{B'}^{m}(\mathcal{I}_{k}) = 1$ by \eqref{form28}. Begin extending $\mathcal{I}_{k}$ to the left, adding elements of $[\ell N]$ one by one, until (the newly defined interval) $\mathcal{I}_{k}$ satisfies 
\begin{itemize}
\item[(a)] $R_{B'}^{m}(\mathcal{I}_{k}) \geq 2^{\zeta m |\mathcal{I}_{k}|}$, or
\item[(b)] $0 \in \mathcal{I}_{k}$, and $R_{B'}^{m}(\mathcal{I}_{k}) < 2^{\zeta m |\mathcal{I}_{k}|}$.
\end{itemize}
In both cases (a)-(b) we have the upper bound
\begin{equation}\label{form10} R_{B'}^{m}(\mathcal{I}_{k}) < 2^{\zeta m(|\mathcal{I}_{k}| - 1)} \cdot 2^{m} \leq 2^{m(\zeta |\mathcal{I}_{k}| + 1)} \leq 2^{m(2\zeta)|\mathcal{I}_{k}|},
\end{equation}
choosing here $\ell = \ell(\alpha,\beta,\gamma) \geq 1$ sufficient large that
\begin{displaymath} \ell \geq \zeta^{-1} \quad \Longrightarrow \quad \zeta |\mathcal{I}_{k}| \geq \ell \zeta \geq 1. \end{displaymath}
This will be legitimate, since $\zeta$ only depends on $\alpha,\beta,\gamma,\kappa$. Later, in \eqref{form27}, we will need to couple this requirement with $\ell \geq \epsilon^{-1}$, where $\epsilon = \epsilon(\alpha,\beta,\gamma) > 0$.

The extended interval $\mathcal{I}_{k}$ may have "swallowed" a number of the previous intervals $\mathcal{I}_{j} \in \ell \mathcal{N}_{B'}$: note however that if $\mathcal{I}_{j} \cap \mathcal{I}_{k} \neq \emptyset$ for some $1 \leq j < k$, then actually $\mathcal{I}_{j} \subset \mathcal{I}_{k}$, since $R_{B'}^{m}(\mathcal{I}_{j}) = 1$ (in other words, there is no reason why the extension algorithm would terminate in the middle of $\mathcal{I}_{j}$). Let $\mathcal{I}_{k_{1}} \in \ell \mathcal{N}_{B'}$ be the right-most interval which does not intersect (equivalently: is not contained in) the extension $\mathcal{I}_{k}$. If no such interval remains, the algorithm terminates. Otherwise, repeat the extension procedure with $\mathcal{I}_{k_{1}}$. Continue in this manner until all the intervals in $\ell \mathcal{N}_{B'}$ are contained in (precisely) one of the extensions. Then rename the extensions as $\{\mathcal{J}_{1},\ldots,\mathcal{J}_{l}\}$.

Since all the intervals in $\ell \mathcal{N}_{B'}$ are contained in one of the extensions $\mathcal{J}_{i}$, we have $\sum |\mathcal{J}_{i}| \geq (1 - \beta_{1}/(1 - \eta)) \cdot \ell N$ by Corollary \ref{cor2}. Unfortunately, the leftmost interval $\mathcal{J}_{1}$ will be "useless" to us in case it was generated by case (b) (of course the intervals $\mathcal{J}_{2},\ldots,\mathcal{J}_{l}$, if any exist, were generated by case (a)). We will next argue that $\mathcal{J}_{1}$ is so short in this case that its removal makes virtually no difference for the sum $\sum |\mathcal{J}_{i}|$. 

If the interval $\mathcal{J}_{1}$ was indeed generated by case (b), then $\mathcal{J}_{1} = \{0,\ldots,h\}$ for some $h \in \{0,\ldots,\ell N - 1\}$. Writing $r := 2^{-m(h + 1)} \geq \delta$, then
\begin{equation}\label{form10a} |B'|_{r} \leq R_{B'}^{m}(\mathcal{J}_{1}) < 2^{\zeta m |\mathcal{J}_{1}|} = 2^{\zeta m(h + 1)} = r^{-\zeta}. \end{equation}
If $r \geq \delta^{\epsilon_{0}} = 2^{-\epsilon_{0}\ell m N}$, then
\begin{equation}\label{form57} |\mathcal{J}_{1}| = h + 1 \leq \epsilon_{0} \cdot \ell N \stackrel{\eqref{form58}}{\leq} \eta \cdot \ell N. \end{equation}
On the other hand, if $\delta \leq r < \delta^{\epsilon_{0}}$, then the Frostman condition for $B$ yields
\begin{displaymath} |B' \cap B(x,r)| \leq |B \cap B(x,r)| \leq r^{\kappa}|B| \leq r^{\kappa} \cdot \delta^{-\eta} |B'|, \quad x \in \R, \end{displaymath}
and consequently
\begin{displaymath} r^{-\zeta} \stackrel{\eqref{form10a}}{>} |B'|_{r} \geq r^{-\kappa} \cdot \delta^{\eta}. \end{displaymath}
This yields $r^{\kappa - \zeta} \geq \delta^{\eta}$, and since $\zeta < \kappa/2$ by \eqref{zeta}, we have $r \geq \delta^{2\eta/\kappa}$. Recalling that $r = 2^{-m(h + 1)}$ and $\delta = 2^{-\ell mN}$, this rearranges to 
\begin{displaymath}  |\mathcal{J}_{1}| = h + 1 \leq \tfrac{2\eta}{\kappa} \cdot \ell N. \end{displaymath}
Combining this estimate with \eqref{form57}, we reach the following conclusion: if $\mathcal{N}_{+}$ stands for the intervals among $\{\mathcal{J}_{1},\ldots,\mathcal{J}_{l}\}$ which were generated by case (a), then
\begin{equation}\label{form11} \sum_{\mathcal{J} \in \mathcal{N}_{+}} |\mathcal{J}| \geq \sum_{i = 1}^{l} |\mathcal{J}_{i}| - |\mathcal{J}_{1}| \geq (1 - \tfrac{\beta_{1}}{1 - \eta} - \tfrac{2\eta}{\kappa}) \cdot \ell N. \end{equation}
To recap, the intervals in $\mathcal{N}_{+}$ are subsets of $[\ell N]$, they are roughly compositions of a few intervals in $\ell \mathcal{N}_{B'}$, plus a little extra, and 
\begin{equation}\label{form12} 2^{\zeta m|\mathcal{J}|} \leq R_{B'}^{m}(\mathcal{J}) \stackrel{\eqref{form10}}{\leq} 2^{m(2\zeta)|\mathcal{J}|}, \qquad \mathcal{J} \in \mathcal{N}_{+}. \end{equation}

\subsection{Branching of $A'$ on typical intervals in $\mathcal{N}_{+}$} In this section we are concerned with upper bounding the numbers $R_{A'}^{m}(\mathcal{J})$ for $\mathcal{J} \in \mathcal{N}_{+}$. We already sketched these computations in Section \ref{s:outline}. Recall that $\gamma > (\alpha - \beta)/(1 - \beta)$ is one of the parameters specified in the statement of Theorem \ref{mainTechnical4}. Write
\begin{equation}\label{gammaA} \Gamma := \tfrac{1}{2} \cdot \tfrac{\alpha - \beta}{1 - \beta} + \tfrac{1}{2} \cdot \gamma \in ((\alpha - \beta)/(1 - \beta),\gamma), \end{equation}
and decompose $\mathcal{N}_{+} = \mathcal{N}_{+}^{\textup{low}} \cup \mathcal{N}_{+}^{\textup{high}}$, where 
\begin{equation}\label{form29} \mathcal{N}_{+}^{\textup{low}} := \{\mathcal{J} \in \mathcal{N}_{+} : R_{A'}^{m}(\mathcal{J}) \leq 2^{\Gamma m |\mathcal{J}|}\} \quad \text{and} \quad \mathcal{N}_{+}^{\textup{high}} := \mathcal{N}_{+} \, \setminus \, \mathcal{N}_{+}^{\textup{low}}. \end{equation}
We remark that $\Gamma \geq \gamma/2 > 0$, since $\beta \leq \alpha$.

 We claim that the total length of intervals in $\mathcal{N}_{+}^{\textup{low}}$ must be reasonably large; in the sequel, these will be the only "useful" intervals for us.  More precisely, if $\eta,\zeta > 0$ are sufficiently small (as chosen in \eqref{parameters}), then
\begin{equation}\label{form30} \sum_{\mathcal{J} \in \mathcal{N}_{+}^{\mathrm{low}}} |\mathcal{J}| \geq \tfrac{1}{2} [(1 - \beta) - (\alpha - \beta)/\Gamma] \cdot \ell N. \end{equation}
Note that $(1 - \beta) - (\alpha - \beta)/\Gamma > 0$, since $\Gamma > (\alpha - \beta)/(1 - \beta)$. To prove \eqref{form30}, set
\begin{displaymath} \mathfrak{N} := \cup \mathcal{N}_{+} \cup \{s \in [\ell N] : R_{B'}^{m}(s) = 1\} \subset [\ell N]. \end{displaymath}
We start by claiming that
\begin{equation}\label{form1} |[\ell N] \, \setminus \, \mathfrak{N}| \geq (\beta_{1} - \eta - 2\zeta) \cdot \ell N. \end{equation}
To see this, start with the estimate
\begin{equation}\label{form14} \delta^{\eta - \beta_{1}} \leq |B'| \leq \prod_{s \in \mathfrak{N}} R_{B'}^{m}(s) \cdot \prod_{s \in [\ell N] \, \setminus \, \mathfrak{N}} 2^{m} = 2^{m |[\ell N] \, \setminus \mathfrak{N}|} \cdot \prod_{s \in \mathfrak{N}} R_{B'}^{m}(s). \end{equation}
The last factor can further be decomposed to those indices "$s$" with $R_{B'}^{m}(s) = 1$ (which in total contribute "$1$" to the product), and then a product over the intervals $\mathcal{J} \in \mathcal{N}_{+}$:
\begin{equation}\label{form15} \prod_{s \in \mathfrak{N}} R_{B'}^{m}(s) = \prod_{\mathcal{J} \in \mathcal{N}_{+}} R_{B'}^{m}(\mathcal{J}) \stackrel{\eqref{form12}}{\leq} \prod_{\mathcal{J} \in \mathcal{N}_{+}} 2^{m(2\zeta)|\mathcal{J}|} \leq 2^{m(2\zeta)\ell N}. \end{equation}
Recalling that $\delta = 2^{-\ell m N}$, and combining \eqref{form14}-\eqref{form15}, leads to
\begin{displaymath} 2^{m|[\ell N] \, \setminus \mathfrak{N}|} \geq 2^{\ell m N(\beta_{1} - \eta - 2\zeta)}, \end{displaymath}
which is equivalent to \eqref{form1}.

We continue with the proof of \eqref{form30}. Note that, by the $(\eta,m,\ell N)$-polarisation of $(A',B')$, we have $R_{A'}^{m}(s) \geq 2^{(1 - \eta)m}$ for all $m \in [\ell N] \, \setminus \, \mathfrak{N} \subset \{s \in [\ell N] : R_{B'}^{m}(s) > 1\}$. Consequently,
\begin{equation}\label{form44} \delta^{-\alpha} \geq |A'| = \prod_{s \in \mathfrak{N}} R_{A'}^{m}(s) \cdot \prod_{s \in [\ell N] \, \setminus \, \mathfrak{N}} 2^{(1 - \eta)m} \geq 2^{(1 - \eta)m|[\ell N] \, \setminus \, \mathfrak{N}|} \cdot \prod_{\mathcal{J} \in \mathcal{N}_{+}} R_{A'}^{m}(\mathcal{J}).  \end{equation} 
For the first factor, we will derive a lower bound from \eqref{form1}. Regarding the second factor, recall the high and low branching families from \eqref{form29}, and write $H := \cup \mathcal{N}_{+}^{\mathrm{high}}$. Then, 
\begin{displaymath} \prod_{\mathcal{J} \in \mathcal{N}_{+}} R_{A'}^{m}(\mathcal{J}) \geq \prod_{\mathcal{J} \in \mathcal{N}_{+}^{\mathrm{high}}} 2^{\Gamma m |\mathcal{J}|} = 2^{\Gamma m |H|}. \end{displaymath} 
Consequently, combining \eqref{form44} with \eqref{form1} and the estimate above, we find that
\begin{displaymath} 2^{\alpha \ell m N} \geq 2^{(1 - \eta)(\beta_{1} - \eta - 2\zeta) \ell m N} \cdot 2^{\Gamma m|H|}, \end{displaymath}
or equivalently
\begin{displaymath} |H| \leq \frac{\alpha - (1 - \eta)(\beta_{1} - \eta - 2\zeta)}{\Gamma} \cdot \ell N. \end{displaymath}
If $\eta,\zeta = 0$, then we would have just shown that $|H| \leq \Gamma^{-1}(\alpha - \beta_{1})\ell N$. Since, on the other hand, the intervals in $\mathcal{N}_{+}$ have total length at least $(1 - \beta_{1}) \ell N$ by \eqref{form11} (still assuming $\eta,\zeta = 0$), we may conclude that the intervals in $\mathcal{N}_{+}^{\mathrm{low}}$ have total length at least $[(1 - \beta_{1}) - (\alpha - \beta_{1})/\Gamma] \cdot \ell N$. Finally, if the parameters $\eta,\zeta$ are chosen appropriately, more precisely as in \eqref{parameters}, then the slightly weaker estimate \eqref{form30} holds, namely
\begin{displaymath} \sum_{\mathcal{J} \in \mathcal{N}_{+}^{\mathrm{low}}} |\mathcal{J}| \geq \tfrac{1}{2} \cdot [(1 - \beta_{1}) - (\alpha - \beta_{1})/\Gamma] \cdot \ell N \geq \tfrac{1}{2} \cdot [(1 - \beta) - (\alpha - \beta)/\Gamma] \cdot \ell N. \end{displaymath} 
The final inequality only uses $\beta \leq \beta_{1}$ and $\Gamma \leq \gamma \leq 1$.

\subsection{Pruning $B'$ to improve separation II}\label{s:pruningII} Fix an interval $\mathcal{J} = \{t - r,t - r + 1,\ldots,t\} \in \mathcal{N}_{+}$, as defined above \eqref{form12}. Then, \eqref{form12} means that if $I \in \mathcal{I}_{m(t - r)}(B')$ is a fixed interval of length $2^{-m(t - r)}$ intersecting $B'$, we have $2^{\zeta m r} \leq |\{J \in \mathcal{I}_{m(t + 1)}(B') : J \subset I\}| \leq 2^{(2\zeta)mr}$. How well are these intervals $J \subset I$ separated? By \eqref{form8}, we already know that any two distinct intervals in $\mathcal{I}_{m(t + 1)}(B')$ are separated by at least $2^{-m(t + 1)}$, but this is far too weak for our purposes: for purposes to become apparent later, we would like the intervals $J$ to be closer to $2^{-m(t - r)}$-separated, and the only control for "$r$" we have is the lower bound $r \geq \ell$ (recalling that each interval in $\mathcal{N}_{+}$ contains an interval in $\ell \mathcal{N}_{B'}$). 

The better separation is "morally true" for the following reason: the interval $\mathcal{J}$ was created by combining levels with \emph{almost} trivial branching, until \emph{roughly} the first moment we saw some non-trivial branching. If the words "almost" and "roughly" could be omitted, we would be done: then each interval $I \in \mathcal{I}_{m(t + 1)}(B')$ would be a "single child" of its parent in $\mathcal{I}_{m(t - r)}(B')$, and since the intervals in $\mathcal{I}_{m(t - r)}(B')$ are $2^{-m(t - r)}$-separated by \eqref{form8}, the same would be true of the intervals in $\mathcal{I}_{m(t + 1)}(B')$.

The words "almost" and "roughly" cannot be omitted, so we need to force the separation by trimming $B'$ to a further $(\eta,m,\ell N)$-uniform subset $B'' \subset B'$. Write
\begin{equation}\label{defEpsilon} \xi := (\gamma - \Gamma)/4, \end{equation}
where $\Gamma$ was defined in \eqref{gammaA}. In particular, $\xi \gtrsim_{\alpha,\beta,\gamma} 1$. We also impose the following additional condition on the constant $\ell \in \N$ selected at \eqref{allParameters}:
\begin{equation}\label{form27} \ell \geq \xi^{-1} = \tfrac{4}{\gamma - \Gamma}. \end{equation}
Recall that $\mathcal{J} = \{t - r,\ldots,t\} \in \mathcal{N}_{+}$ was the shortest extension (to the left) of a certain interval $\mathcal{J}_{0} \in \ell \mathcal{N}_{B'}$ with the property $R_{B'}^{m}(\mathcal{J}) \geq 2^{\zeta m |\mathcal{J}|} = 2^{\zeta m (r + 1)}$. Consequently, the subinterval $\mathcal{J}_{\xi} = \{t - \floor{(1 - 2\xi) r},\ldots,t\}$ does not yet have this property, that is,
\begin{equation}\label{form31} R_{B'}^{m}(\mathcal{J}_{\xi}) < 2^{\zeta m |\mathcal{J}_{\xi}|} = 2^{\zeta m(\floor{(1 - 2\xi)r} + 1)}. \end{equation}
Here we used that $\floor{(1 - 2\xi)r} < r$, which is true because $\xi r \geq \xi \ell \geq 1$ by \eqref{form27}. 

Now, it follows from a combination of \eqref{form31}, and $R_{B'}^{m}(\mathcal{J}) \geq 2^{\zeta m |\mathcal{J}|} = 2^{\zeta m(r + 1)}$, that
\begin{equation}\label{form32} R_{B'}^{m}(\mathcal{J} \, \setminus \, \mathcal{J}_{\xi}) \geq 2^{\zeta m (r - \floor{(1 - 2\xi)r})} \geq 2^{2\xi \zeta m r} \geq 2^{\xi \zeta m(r + 1)} = 2^{\xi \zeta m|\mathcal{J}|}. \end{equation} 
We are then prepared to define the desired subset $B'' \subset B'$. Let $\mathcal{S}_{\xi} := \cup \{\mathcal{J}_{\xi} : \mathcal{J} \in \mathcal{N}_{+}\}$, and apply the "collapsing" Lemma \ref{ShLemma3} to the $(m,\ell N)$-uniform set $B'$, and the set of scales $\mathcal{S}_{\xi} \subset [\ell N]$. The product is an $(m,\ell N)$-uniform subset $B'' \subset B'$ 
%\begin{displaymath} |B''| \geq |B'| \cdot \prod_{s \in \mathcal{S}_{\xi}} R_{B'}^{m}(s)^{-1} \geq |B'| \cdot \prod_{\mathcal{J} \in \mathcal{N}_{+}} R_{B'}^{m}(\mathcal{J})^{-1} \stackrel{\eqref{form12}}{\geq} \delta^{2\zeta} \cdot |B'|, \end{displaymath}
such that
\begin{displaymath} R_{B''}^{m}(s) = \begin{cases} R_{B'}^{m}(s), & s \notin \mathcal{S}_{\xi}, \\ 1, & s \in \mathcal{S}_{\xi}. \end{cases} \end{displaymath}
In particular, $R_{B''}^{m}(s) = R_{B'}^{m}(s)$ for all $s \in \mathcal{J} \, \setminus \, \mathcal{J}_{\xi}$, for $\mathcal{J} \in \mathcal{N}_{+}$, so \eqref{form32} remains valid for the set $B''$:
\begin{equation}\label{form33} R_{B''}^{m}(\mathcal{J}) \geq R_{B'}^{m}(\mathcal{J} \, \setminus \, \mathcal{J}_{\xi}) \geq 2^{\xi \zeta m |\mathcal{J}|}. \end{equation}
In fact, the first inequality is an equation, since $R_{B''}^{m}(s) = 1$ for all $s \in \mathcal{J}_{\xi} \subset \mathcal{S}_{\xi}$. Curiously, we will have no use for a "global" lower bound for $|B''|$, although it would be easy to deduce from \eqref{form12} that $|B''| \geq \delta^{2\zeta} \cdot |B'|$. From now on, only the "local" branching estimate \eqref{form33} will be needed, and "global" lower bound $|B'| \gtrapprox \delta^{-\beta}$ has already been fully exploited in previous sections (where the relation between $\gamma,\alpha$ and $\beta$ appeared).

The point of reducing $B'$ to $B''$ was to improve the $2^{-m(t + 1)}$-separation of distinct intervals $I \in \mathcal{I}_{m(t + 1)}(B')$ to something resembling $2^{-m(t - r)}$-separation. This has now been accomplished. More precisely, assume that $\mathcal{J} = \{t - r,\ldots,t\} \in \mathcal{N}_{+}$, let $I \in \mathcal{I}_{m(t - r)}(B'')$, and and let $I_{1},I_{2} \in \mathcal{I}_{m(t + 1)}(B'')$ be distinct. Then, since 
\begin{displaymath} R_{B''}^{m}(s) = 1, \qquad s \in \mathcal{J}_{\xi} = \{t - \floor{(1 - \xi)r},\ldots,t\}, \end{displaymath}
the intervals $I_{1},I_{2}$ are contained inside distinct intervals 
\begin{displaymath} \hat{I}_{1},\hat{I}_{2} \in \mathcal{I}_{m(t - \floor{(1 - \xi)r})}(B'') \subset \mathcal{I}_{m(t - \floor{(1 - \xi)r})}(B'). \end{displaymath}
Consequently, using also that $\floor{(1 - \xi)r} \geq (1 - \xi)r - 1$, and $\xi r \geq \xi \ell \geq 1$,
\begin{align} \dist(I_{1},I_{2}) \geq \dist(\hat{I}_{1},\hat{I}_{2}) & \stackrel{\eqref{form8}}{\geq} 2^{-m(t - \floor{(1 - \xi)r})} \notag\\
& \geq 2^{-\xi m r - m} \cdot 2^{-m(t - r)} \notag\\
&\label{form34} \geq 2^{-2\xi m(r + 1)} \cdot 2^{-m(t - r)}. \end{align} 
Inequality \eqref{form34} is more clearly phrased in the following way:
\begin{lemma}\label{lemma5} Let $\mathcal{J} = \{t - r,\ldots,t\} \in \mathcal{N}_{+}$, $\Delta_{\mathcal{J}} := 2^{-m(t - r)}$, and $\delta_{\mathcal{J}} := 2^{-m(t + 1)}$. Let $I \in \mathcal{I}_{m(t - r)}(B'')$ be a dyadic interval of length $\Delta_{\mathcal{J}}$ intersecting $B''$, and let $I_{1},I_{2} \in \mathcal{I}_{m(t + 1)}(B'')$ be distinct with $I_{1},I_{2} \subset I$. Then,
\begin{displaymath} \dist(I_{1},I_{2}) \geq \left(\frac{\delta_{\mathcal{J}}}{\Delta_{\mathcal{J}}} \right)^{2\xi} \cdot |\Delta_{\mathcal{J}}|. \end{displaymath} \end{lemma}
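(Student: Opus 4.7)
The plan is essentially just to repackage the computation \eqref{form34}, which is already performed in the discussion preceding the lemma statement, into the cleaner form requested. The lemma is really a cosmetic restatement of the bound the author has just derived.

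First I would unpack the notation: since $\mathcal{J} = \{t-r, \ldots, t\}$, we have $\Delta_{\mathcal{J}} = 2^{-m(t-r)}$ and $\delta_{\mathcal{J}} = 2^{-m(t+1)}$, so the ratio is $\delta_{\mathcal{J}}/\Delta_{\mathcal{J}} = 2^{-m(r+1)}$. Hence the target lower bound $(\delta_{\mathcal{J}}/\Delta_{\mathcal{J}})^{2\xi} \cdot \Delta_{\mathcal{J}}$ equals exactly $2^{-2\xi m(r+1)} \cdot 2^{-m(t-r)}$, which is precisely the right-hand side appearing in \eqref{form34}.

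Next I would justify \eqref{form34} itself, which is the substantive content. The key input is the construction of $B''$ via Lemma \ref{ShLemma3}: for every $s \in \mathcal{J}_{\xi} = \{t - \lfloor (1-\xi)r \rfloor, \ldots, t\}$ one has $R_{B''}^{m}(s) = 1$. Consequently, any two intervals $I_1, I_2 \in \mathcal{I}_{m(t+1)}(B'')$ with $I_1, I_2 \subset I$ must lie inside two distinct ``ancestor'' intervals $\hat{I}_1, \hat{I}_2 \in \mathcal{I}_{m(t - \lfloor (1-\xi)r \rfloor)}(B'')$, because the trivial branching on $\mathcal{J}_\xi$ forbids $I_1$ and $I_2$ from sharing an ancestor strictly inside $I$ and strictly above scale $m(t+1)$. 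Since $B'' \subset B'$, these ancestors also belong to $\mathcal{I}_{m(t - \lfloor (1-\xi)r \rfloor)}(B')$, and the separation property \eqref{form8} of $B'$ gives $\dist(\hat{I}_1, \hat{I}_2) \geq 2^{-m(t - \lfloor (1-\xi)r \rfloor)}$.

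Finally, using $\lfloor (1-\xi)r \rfloor \geq (1-\xi)r - 1$ and the standing assumption \eqref{form27} that $\xi \ell \geq 1$ (so $\xi r \geq \xi \ell \geq 1$, and the interval $\mathcal{J}_\xi$ is nontrivial), one estimates
\begin{displaymath}
2^{-m(t - \lfloor (1-\xi)r \rfloor)} \geq 2^{-\xi m r - m} \cdot 2^{-m(t-r)} \geq 2^{-2\xi m(r+1)} \cdot 2^{-m(t-r)},
\end{displaymath}
matching the claimed bound. The only mild ``obstacle'' is keeping track of the floor function and the $+1$ slack, but the condition $\xi \ell \geq 1$ is exactly designed to absorb both into the factor $2\xi$ in the exponent. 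No deeper argument is needed.
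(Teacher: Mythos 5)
Your proof is correct and is essentially the paper's own argument: the paper proves the lemma by observing $\delta_{\mathcal{J}}/\Delta_{\mathcal{J}} = 2^{-m(r+1)}$ and declaring it a rewording of \eqref{form34}, which is derived exactly as you do — collapsed branching on $\mathcal{J}_{\xi}$ forces distinct ancestors at scale $m(t - \floor{(1-\xi)r})$, the separation \eqref{form8} inherited from $B'$ applies to them, and $\xi r \geq \xi\ell \geq 1$ absorbs the floor and the $+1$ into the exponent $2\xi(r+1)$. No further comment needed.
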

\begin{proof} Observing that $\delta_{\mathcal{J}}/\Delta_{\mathcal{J}} = 2^{-m(r + 1)}$, this inequality is just a rewording of \eqref{form34}. \end{proof}

\subsection{Elementary projection estimates} The plan is to prove lower bounds for $|A' + cB''|_{\delta}$ by, roughly speaking, establishing separately lower bounds for $|(A' \cap I) + c(B'' \cap J)|_{\delta}$, where $I,J \subset [0,1)$ are suitable dyadic intervals intersecting $A',B''$, and then combining the results. In this section, we will prove an auxiliary result which will imply the required lower bounds for $|(A' \cap I) + c(B'' \cap J)|_{\delta}$. To be more accurate, instead of proving lower bounds for $|(A' \cap I) + c(B'' \cap J)|_{\delta}$ directly, we prove (stronger) lower bounds for the entropies of suitable measures supported on $(A' \cap I) + c(B'' \cap J)$ (see \eqref{form47}). This is (only!) done for the reason that such "multi-scale" information about entropy is cleaner to combine than "multi-scale" information about cardinalities.

We introduce the following notation. Dyadic cubes in $\R^{d}$ of side-length $2^{-n}$ are denoted $\mathcal{D}_{n}$. If $\mu$ is a Borel probability measure on $\R^{d}$, and $n \in \N$, we write
\begin{displaymath} \mu^{(n)} := \sum_{Q \in \mathcal{D}_{n}} \frac{\mu(Q)}{\mathcal{L}^{d}(Q)} \cdot \mathcal{L}^{d}|_{Q}. \end{displaymath}
Thus $\mu^{(n)}$ is a "$2^{-n}$-discretisation of $\mu$". Note that $\mu^{(n)} \in L^{2}(\R^{d}) \cap L^{\infty}(\R^{d})$. We also define the projections $\pi_{c}(x,y) := x + cy$ for $(x,y) \in \R^{2}$ and $c \in \R$.

\begin{lemma}\label{lemma3} Let $\Delta = 2^{-n} \in 2^{-\N}$, and let $\gamma,\gamma_{A},\gamma_{B} \in (0,1]$, and $\mathbf{C} \geq 1$. Let $\mathcal{A},\mathcal{B} \subset \mathcal{D}_{n}$ be collections of dyadic $\Delta$-intervals with $|\mathcal{A}| = \Delta^{-\gamma_{A}}$ and $|\mathcal{B}| = \Delta^{-\gamma_{B}}$. We assume the following separation from $\mathcal{B}$, for some $\xi \in (0,1]$:
\begin{equation}\label{form36} \dist(I_{1},I_{2}) \geq \Delta^{\xi} \quad \text{for distinct} \quad I_{1},I_{2} \in \mathcal{B}. \end{equation}
Let
\begin{itemize}
\item Let $\mu$ be a probability measure with $\spt \mu \subset (\cup \mathcal{A}) \times (\cup \mathcal{B})$ with the property that $\mu(Q) \leq \mathbf{C}\Delta^{\gamma_{A} + \gamma_{B}}$ for $Q \in \mathcal{D}_{n}$. 
\item Let $\nu$ be a probability measure on $[-1,1]$ such that $\nu(I) \leq \mathbf{C}\Delta^{\gamma}$ for all $I \in \mathcal{D}_{n}$. 
\end{itemize}
Then,
\begin{equation}\label{form37} \int_{-1}^{1} \|(\pi_{c}\mu)^{(n)}\|_{2}^{2} \, d\nu(c) \lesssim \mathbf{C} \cdot \max\{\Delta^{\gamma_{A} + \gamma_{B} - 1},\Delta^{\gamma - 1 - \xi}\}. \end{equation} 
\end{lemma}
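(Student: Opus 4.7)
The plan is to reduce the $L^{2}$-norm on the left of \eqref{form37} to counting coincidences, then to split the resulting double integral based on whether the second coordinates of the two points lie in the same interval of $\mathcal{B}$ or in different ones.

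First I would observe that for any $\Delta$-measure $\rho$ on $\R$,
\begin{displaymath}
 \|\rho^{(n)}\|_{2}^{2} \;=\; \Delta^{-1}\sum_{I \in \mathcal{D}_{n}} \rho(I)^{2} \;\lesssim\; \Delta^{-1} \int\!\!\int \mathbf{1}\{|u - v| \leq 2\Delta\}\, d\rho(u)\,d\rho(v).
\end{displaymath}
Applying this with $\rho = \pi_{c}\mu$ and swapping the order of integration via Fubini, it suffices to prove
\begin{equation}\label{form:plan1}
 \int\!\!\int \nu\bigl(\{c \in [-1,1] : |\pi_{c}(x) - \pi_{c}(y)| \leq 2\Delta\}\bigr)\, d\mu(x)\, d\mu(y) \;\lesssim\; \mathbf{C}\cdot \max\{\Delta^{\gamma_{A} + \gamma_{B}}, \Delta^{\gamma - \xi}\};
\end{equation}
dividing by $\Delta$ then yields \eqref{form37}. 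Writing $x = (x_{1},x_{2})$ and $y = (y_{1},y_{2})$, one has $\pi_{c}(x) - \pi_{c}(y) = (x_{1} - y_{1}) + c(x_{2} - y_{2})$, which, for each fixed pair $(x,y)$ with $x_{2} \neq y_{2}$, restricts $c$ to an interval of length $\lesssim \Delta/|x_{2} - y_{2}|$.

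Next I would split the $(x,y)$-integral in \eqref{form:plan1} into two regions. In the \emph{diagonal} region, $x_{2}$ and $y_{2}$ lie in the same interval $J \in \mathcal{B}$; then $|\pi_{c}(x) - \pi_{c}(y)| \leq 2\Delta$ forces $x_{1}, y_{1}$ to lie within $O(\Delta)$ of each other as well, so $(x,y)$ lie in a common cube (or a bounded number of adjacent cubes) of $\mathcal{D}_{n}$. Bounding $\nu(\cdot) \leq 1$ and using $\mu(Q) \leq \mathbf{C}\Delta^{\gamma_{A} + \gamma_{B}}$ produces the $\mathbf{C}\Delta^{\gamma_{A} + \gamma_{B}}$ term via $\sum_{Q} \mu(Q)^{2} \leq \mathbf{C}\Delta^{\gamma_{A}+\gamma_{B}}$.

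In the \emph{off-diagonal} region, $x_{2}, y_{2}$ lie in distinct intervals of $\mathcal{B}$, so \eqref{form36} gives $|x_{2} - y_{2}| \geq \Delta^{\xi}$. Consequently the set of admissible $c$ is contained in an interval of length $\lesssim \Delta^{1 - \xi}$. The key step is to upgrade the cube-scale Frostman bound $\nu(I) \leq \mathbf{C}\Delta^{\gamma}$ to all scales $r \geq \Delta$, by covering an interval of length $r$ with $\lceil r/\Delta \rceil$ intervals in $\mathcal{D}_{n}$ to obtain $\nu(B(c_{0}, r)) \lesssim \mathbf{C} r \Delta^{\gamma - 1}$. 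Applied with $r = \Delta^{1-\xi}$ this gives $\nu(\cdot) \lesssim \mathbf{C}\Delta^{\gamma - \xi}$; integrating trivially against $\mu \otimes \mu$ contributes the $\mathbf{C}\Delta^{\gamma - \xi}$ term. Combining the two estimates and dividing by $\Delta$ yields \eqref{form37}.

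The only real subtlety, which I expect to be the minor obstacle, is the passage from the Frostman condition at scale $\Delta$ to one at the larger scale $\Delta^{1-\xi}$; this is where the assumption $\xi \leq 1$ (ensuring $\Delta^{1-\xi} \geq \Delta$) is essential, and where the worst-case scaling $\Delta^{\gamma - \xi}$ (versus $\Delta^{\gamma}$) originates. Everything else is a direct Fubini/case-splitting argument, well-suited to the "potential-theoretic" template alluded to around \eqref{form91}.
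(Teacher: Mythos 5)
Your proposal is correct and follows essentially the same route as the paper: rewrite the $L^{2}$-norm as a pair-counting integral, apply Fubini, and split into a diagonal part (controlled by the Frostman bound $\mu(Q) \leq \mathbf{C}\Delta^{\gamma_{A}+\gamma_{B}}$) and an off-diagonal part where the separation \eqref{form36} forces $|x_{2}-y_{2}| \geq \Delta^{\xi}$, so that the admissible $c$'s form an interval of length $\lesssim \Delta^{1-\xi}$ of $\nu$-measure $\lesssim \mathbf{C}\Delta^{\gamma-\xi}$. The only cosmetic difference is that you split directly on whether $x_{2},y_{2}$ share a $\mathcal{B}$-interval, whereas the paper splits on $|p-q| \gtrless 10\Delta$ and then uses the bounded slope of the tubes $\pi_{c}^{-1}(I)$ to reduce to your case; both are fine.
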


\begin{remark} To help interpreting the upper bound \eqref{form37}, let us mention the "trivial" estimate $\|(\pi_{c}\mu)^{(n)}\|_{2}^{2} \lesssim \Delta^{\gamma_{A} - 1}$ for \textbf{every} $c \in [0,1)$. This could be deduced rather easily from \eqref{form38} below. Therefore, \eqref{form37} beats the trivial bound whenever $\gamma > \gamma_{A} + \xi$.  \end{remark}

\begin{proof}[Proof of Lemma \ref{lemma3}] By definition,
\begin{displaymath} (\pi_{c}\mu)^{(n)} = \sum_{I \in \mathcal{D}_{n}} \frac{\pi_{c}\mu(I)}{\Delta} \cdot \mathcal{L}^{1}|_{I} = \sum_{I \in \mathcal{D}_{n}} \frac{\mu(\pi_{c}^{-1}(I))}{\Delta} \cdot \mathcal{L}^{1}|_{I}. \qquad c \in [-1,1]. \end{displaymath}
Consequently,
\begin{equation}\label{form38} \|(\pi_{c}\mu)^{(n)}\|_{2}^{2} = \sum_{I \in \mathcal{D}_{n}} \left(\frac{\mu(\pi_{c}^{-1}(I))}{\Delta} \right)^{2} \cdot \Delta = \frac{1}{\Delta} \cdot \sum_{I \in \mathcal{D}_{n}} (\mu \times \mu)(\{(p,q) : p,q \in \pi_{c}^{-1}(I)\}). \end{equation}
Therefore,
\begin{align*} \Delta \cdot \int_{-1}^{1} \|(\pi_{c}\mu)^{(n)}\|_{2}^{2} \, d\nu(c) & \sim \int_{-1}^{1} \sum_{I \in \mathcal{D}_{n}} (\mu \times \mu)(\{(p,q) : p,q \in \pi_{c}^{-1}(I)\}) \, d\nu(c)\\
& = \iint \int \sum_{I \in \mathcal{D}_{n}} \mathbf{1}_{\{p,q \in \pi_{c}^{-1}(I)\}}(c) \, d\nu(c) \, d\mu(p) \, d\mu(q). \end{align*}
We split the outer integration into 
\begin{displaymath} \Omega_{\mathrm{near}} := \{(p,q) : |p - q| < 10\Delta\} \quad \text{and} \quad \Omega_{\mathrm{far}} := \{(p,q) : |p - q| \geq 10\Delta\}. \end{displaymath}
Regarding $\Omega_{\mathrm{near}}$, we only use the observation that if $p,q \in A \times B$ and $c \in [0,1]$ are fixed, then there is at most one interval $I \in \mathcal{D}_{n}$ such that $p,q \in \pi_{c}^{-1}(I)$. Since $\mu,\nu$ are probability measures, and $\mu(B(x,10\Delta)) \lesssim \mathbf{C}\Delta^{\gamma_{A} + \gamma_{B}}$ for every $x \in \R^{2}$, this leads to
\begin{equation}\label{form35} \iint_{\Omega_{\mathrm{near}}} \sum_{I \in \mathcal{D}_{n}} \mathbf{1}_{\{p,q \in \pi_{c}^{-1}(I)\}}(c) \, d\nu(c) \, d\mu(p) \, d\mu(q) \lesssim (\mu \times \mu)(\Omega_{\mathrm{near}}) \lesssim \mathbf{C}\Delta^{\gamma_{A} + \gamma_{B}}. \end{equation}
We then consider integral over the domain $\Omega_{\mathrm{far}}$. A basic, easy to verify, observation is this: if $p,q \in \R^{2}$ are fixed and distinct, then the set
\begin{displaymath} I(p,q) := \{c \in [0,1] : p,q \in \pi_{c}^{-1}(I) \text{ for some } I \in \mathcal{D}_{n}\} \end{displaymath}
is contained in an interval of length $\lesssim \Delta/|p - q|$, and in particular can be covered by $\lesssim |p - q|^{-1}$ dyadic intervals of length $\Delta$.

We combine this with the following additional observation. Note that all the tubes $\pi_{c}^{-1}(I)$ make an angle $\leq \pi/4$ with the $y$-axis (this is attained for $c = 1$, and for $c = 0$, the tubes $\pi_{c}^{-1}(I)$ are vertical). Therefore, 
\begin{displaymath} p,q \in A \times B,\,  |p - q| \geq 10\Delta \text{ and } \exists \, c \in [-1,1] \text{ s.t. } p,q \in \pi_{c}^{-1}(I) \quad \Longrightarrow \quad |p_{y} - q_{y}| > \Delta. \end{displaymath}
Here $p_{y},q_{y} \in B$ refer to the second coordinates of $p,q$. Namely, if $|p - q| \geq 10\Delta$ and $|p_{y} - q_{y}| < \Delta$, then $|p_{x} - q_{x}| \geq 9\Delta$, which makes the pair $p,q$ too "horizontal" to be contained in any common tube $\pi_{c}^{-1}(I)$, with $c \in [-1,1]$ and $I \in \mathcal{D}_{n}$. Now, recalling our assumption \eqref{form36} that $\dist(I_{1},I_{2}) \geq \Delta^{\xi}$ for distinct $I_{1},I_{2} \in \mathcal{B}$, the conclusion $|p_{y} - q_{y}| > \Delta$ can be amplified substantially: $|p_{y} - q_{y}| > \Delta$ implies that $p_{y},q_{y}$ lie in distinct intervals in $\mathcal{B}$, hence $|p - q| \geq |p_{y} - q_{y}| \geq \Delta^{\xi}$. Therefore:
\begin{displaymath} \iint_{\Omega_{\mathrm{far}}} \int \sum_{I \in \mathcal{D}_{n}} \mathbf{1}_{\{p,q \in \pi_{c}^{-1}(I)\}}(c) \, d\nu(c) \, d\mu(p) \, d\mu(q) = \iint_{\Omega_{\mathrm{FAR}}} \int_{I(p,q)} \ldots \, d\nu(c) \, d\mu(p) \, d\mu(q),  \end{displaymath}
with $\Omega_{\mathrm{FAR}} = \{(p,q) : |p - q| \geq \Delta^{\xi}\}$. Now, for every pair $(p,q) \in \Omega_{\mathrm{FAR}}$, we note that the set $I(p,q) \subset [0,1]$ can be covered by $\lesssim |p - q|^{-1} \leq \Delta^{-\xi}$ dyadic intervals of length $\Delta$, and for each $c \in I(p,q)$, there is exactly one $I \in \mathcal{D}_{n}$ such that $p,q \in \pi_{c}^{-1}(I)$. Therefore,
\begin{displaymath} \int_{I(p,q)} \sum_{I \in \mathcal{D}_{n}} \mathbf{1}_{\{p,q \in \pi_{c}^{-1}(I)\}}(c) \, d\nu(v) = \nu(I(p,q)) \lesssim \mathbf{C}\Delta^{\gamma - \xi}, \qquad (p,q) \in \Omega_{\mathrm{FAR}}, \end{displaymath}
and consequently
\begin{displaymath} \iint_{\Omega_{\mathrm{far}}} \int \sum_{I \in \mathcal{D}_{n}} \mathbf{1}_{\{p,q \in \pi_{c}^{-1}(I)\}}(c) \, d\nu(c) \, d\mu(p) \, d\mu(q) \lesssim \mathbf{C}\Delta^{\gamma - \xi}. \end{displaymath}
Combining this estimate with \eqref{form35}, we arrive at \eqref{form37}. \end{proof}
 
We will next deduce, as a corollary, an entropy version of Lemma \ref{lemma3}. For this purpose, we record the following \cite[Lemma 3.6]{MR3940442} by Shmerkin:
\begin{lemma}\label{lemma4} Let $\mu$ be a Borel probability measure on $\R^{d}$. The following relation holds between the $\mathcal{D}_{n}$-entropy $H(\mu,\mathcal{D}_{n})$ of $\mu$, and the $L^{2}$-norm of $\mu^{(n)}$:
\begin{equation}\label{form42} H(\mu,\mathcal{D}_{n}) \geq dn - \log \|\mu^{(n)}\|_{2}^{2}. \end{equation} 
\end{lemma}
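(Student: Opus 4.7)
The plan is to unwind both sides of the inequality via direct computation and reduce the claim to the standard fact that Shannon entropy dominates Rényi-2 (collision) entropy. First I would observe that, by definition of $\mu^{(n)}$, the density of $\mu^{(n)}$ on any cube $Q \in \mathcal{D}_n$ equals $2^{dn}\mu(Q)$. Integrating the square of this density against Lebesgue measure gives
\begin{displaymath} \|\mu^{(n)}\|_{2}^{2} = \sum_{Q \in \mathcal{D}_{n}} (2^{dn}\mu(Q))^{2} \cdot 2^{-dn} = 2^{dn} \sum_{Q \in \mathcal{D}_{n}} \mu(Q)^{2}. \end{displaymath}
Taking logarithms (base $2$, which is evidently the normalisation used throughout the paper, since with this convention $\log|\mathcal{D}_{n}\cap [0,1)^{d}| = dn$), the inequality to be proved reduces to
\begin{displaymath} H(\mu,\mathcal{D}_{n}) \geq -\log \sum_{Q \in \mathcal{D}_{n}} \mu(Q)^{2}. \end{displaymath}

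This last inequality is an immediate consequence of Jensen's inequality applied to the concave function $\log$, with probability weights $\{\mu(Q)\}_{Q \in \mathcal{D}_n}$ and values $\{\mu(Q)\}_{Q \in \mathcal{D}_n}$:
\begin{displaymath} \sum_{Q \in \mathcal{D}_{n}} \mu(Q) \log \mu(Q) \leq \log \sum_{Q \in \mathcal{D}_{n}} \mu(Q) \cdot \mu(Q) = \log \sum_{Q \in \mathcal{D}_{n}} \mu(Q)^{2}, \end{displaymath}
where the sum on the left runs only over cubes with $\mu(Q) > 0$. Multiplying by $-1$ gives exactly $H(\mu,\mathcal{D}_{n}) \geq -\log \sum_Q \mu(Q)^2$, and combining with the identity for $\log \|\mu^{(n)}\|_2^2$ yields the claim. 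There is no real obstacle here; the only subtlety is keeping track of the base of the logarithm and the factor $2^{dn}$ coming from the normalisation of $\mu^{(n)}$, both of which cancel cleanly.
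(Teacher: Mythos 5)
Your proof is correct. The paper does not prove this lemma itself — it quotes it as \cite[Lemma 3.6]{MR3940442} from Shmerkin — but your argument is the standard one: compute $\|\mu^{(n)}\|_{2}^{2} = 2^{dn}\sum_{Q}\mu(Q)^{2}$ from the definition of the discretisation, and then observe that the claim is exactly the inequality between Shannon entropy and R\'enyi collision entropy, which follows from Jensen's inequality applied to $\log$ with weights $\mu(Q)$. The bookkeeping (base-$2$ logarithm, restriction to cubes of positive measure, cancellation of the $2^{dn}$ factor) is handled correctly.
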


Here, and below, "$\log$" refers to logarithm in base $2$.

\begin{cor}\label{cor3} Let $\Delta = 2^{-n} \in 2^{-\N}$, and assume that $\mathcal{A},\mathcal{B},\mu,\nu,\gamma_{A},\gamma_{B},\gamma,\mathbf{C}$, and $\xi$ have the same meaning as in Lemma \ref{lemma3}. Then, 
\begin{equation}\label{form39} \int_{-1}^{1} H(\pi_{c}\mu,\mathcal{D}_{n}) \, d\nu(c) \geq n \cdot \min\{\gamma_{A} + \gamma_{B},\gamma - \xi\} - \log \mathbf{C} - \log C_{0}, \end{equation}
where $C_{0} > 0$ is an absolute constant. 
\end{cor}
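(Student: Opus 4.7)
The plan is to combine Lemma~\ref{lemma4} with the $L^{2}$ bound of Lemma~\ref{lemma3}, using Jensen's inequality as the bridge. Since $\pi_{c}\mu$ is a Borel probability measure on $\R$, taking $d = 1$ in \eqref{form42} yields the pointwise estimate
\[ H(\pi_{c}\mu,\mathcal{D}_{n}) \geq n - \log \|(\pi_{c}\mu)^{(n)}\|_{2}^{2}, \qquad c \in [-1,1]. \]
I will integrate this against the probability measure $\nu$ and then apply Jensen's inequality (using concavity of $\log$) to pull the integral inside the logarithm, obtaining
\[ \int_{-1}^{1} H(\pi_{c}\mu,\mathcal{D}_{n}) \, d\nu(c) \geq n - \log \left( \int_{-1}^{1} \|(\pi_{c}\mu)^{(n)}\|_{2}^{2} \, d\nu(c) \right). \]

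Lemma~\ref{lemma3} then supplies an upper bound of the form $C_{0}\mathbf{C} \cdot \max\{\Delta^{\gamma_{A} + \gamma_{B} - 1},\Delta^{\gamma - 1 - \xi}\}$ for the remaining integral, for some absolute constant $C_{0} \geq 1$. Substituting this in and using $\log \Delta^{a} = -an$, the logarithm of the maximum becomes $n - n \cdot \min\{\gamma_{A} + \gamma_{B},\gamma - \xi\}$ plus the additive constants $\log C_{0} + \log \mathbf{C}$. The two copies of $n$ then cancel, leaving exactly the claimed lower bound \eqref{form39}.

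There is no genuine obstacle in this deduction: Jensen is applicable because $\nu$ is a probability measure on $[-1,1]$, and the arithmetic of the exponents automatically converts the maximum of two powers of $\Delta$ appearing in Lemma~\ref{lemma3} into the minimum appearing in \eqref{form39}. The only bookkeeping points worth noting are that Lemma~\ref{lemma4} is applied with ambient dimension $d = 1$ (the target of the projection $\pi_{c}$), and that the absolute constant $C_{0}$ in the statement absorbs both the implicit constant in Lemma~\ref{lemma3} and the base-$2$ conversion factors introduced by working with $\log_{2}$ throughout.
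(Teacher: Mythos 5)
Your proposal is correct and follows exactly the paper's own proof: Lemma \ref{lemma4} with $d = 1$, Jensen's inequality to pull the $\nu$-integral inside the logarithm, and then the $L^{2}$ bound of Lemma \ref{lemma3}, with the exponent arithmetic converting the maximum into the minimum. No further comment is needed.
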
 

\begin{proof} First combine \eqref{form42} (with $d = 1$) and Jensen's inequality to deduce that
\begin{displaymath} \int_{-1}^{1} H(\pi_{c}\mu,\mathcal{D}_{n}) \, d\nu(c) \geq n -  \int_{-1}^{1} \log \|(\pi_{c}\mu)^{(n)}\|_{2}^{2} \, d\nu(c) \geq n - \log \left( \int_{0}^{1} \|(\pi_{c}\mu)^{(n)}\|_{2}^{2} \, d\nu(c) \right). \end{displaymath}
Here,
\begin{displaymath} \int_{-1}^{1} \|(\pi_{c}\mu)^{(n)}\|_{2}^{2} \, d\nu(c) \leq C_{0}\mathbf{C}\max\{2^{n(1 - \gamma_{A} - \gamma_{B})},2^{n(1 + \xi - \gamma)}\} \end{displaymath}
for some absolute constant $C_{0} > 0$, by Lemma \ref{lemma3}. These inequalities give \eqref{form39}. \end{proof}

\subsection{Projecting pieces of $A' \times B''$} We next put Corollary \ref{cor3} to work in our "real-world" situation. We recall the following notation from Section \ref{s:notation}. Assume that $\mu$ is a Borel probability measure on $\R^{d}$ (we will use this for both $d = 1$ and $d = 2$), and let $Q \in \mathcal{D}_{n}$ be a dyadic cube of side-length $2^{-n}$ such that $\mu(Q) > 0$. Let $T_{Q} \colon Q \to [0,1)^{d}$ be the rescaling map with $T_{Q}(Q) = [0,1)^{d}$. We define the measures
\begin{equation}\label{form45} \mu_{Q} := \tfrac{1}{\mu(Q)} \cdot \mu|_{Q} \quad \text{and} \quad \mu^{Q} := T_{Q}\mu_{Q}. \end{equation}
In this section, $\mu = \mu_{A'} \times \mu_{B''}$, where $\mu_{A'}$ is the normalised counting measure on $A'$, and $\mu_{B''}$ is the normalised counting measure on $B''$ (defined in Section \ref{s:pruningII}). For $s \in [\ell N]$, we will write 
\begin{displaymath} \mathcal{D}_{ms}(\mu) := \{I \times J : I \in \mathcal{I}_{ms}(A') \text{ and } J \in \mathcal{I}_{ms}(B'')\} = \{Q \in \mathcal{D}_{ms} : \mu(Q) > 0\}. \end{displaymath} 

Fix 
\begin{displaymath} \mathcal{J} = \{t - r,\ldots,t\} \in \mathcal{N}_{+}^{\mathrm{low}}. \end{displaymath}
As defined in \eqref{form29}, this means that $R_{A'}^{m}(\mathcal{J}) \leq 2^{\Gamma m|\mathcal{J}|}$, where $\Gamma \in ((\alpha - \beta)/(1 - \beta),\gamma) \subset [\gamma/2,\gamma)$ was the parameter specified in \eqref{gammaA}. For now, it is only important to remember that $\gamma - \Gamma \gtrsim_{\alpha,\beta,\gamma} 1$. Fix intervals $I_{0} \in \mathcal{I}_{m(t - r)}(A')$ and $J_{0} \in \mathcal{I}_{m(t - r)}(B'')$. Write
\begin{displaymath} \mathcal{A}_{I_{0}} := \{I' \in \mathcal{I}_{m(t + 1)}(A') : I' \subset I_{0}\} \quad \text{and} \quad \mathcal{B}_{J_{0}} := \{J' \in \mathcal{I}_{m(t + 1)}(B'') : J' \subset J_{0}\}. \end{displaymath}
Then
\begin{equation}\label{form43} |\mathcal{A}_{I_{0}}| = R_{A'}^{m}(\mathcal{J}) \leq 2^{\Gamma m|\mathcal{J}|} \quad \text{and} \quad |\mathcal{B}_{J_{0}}| = R_{B''}^{m}(\mathcal{J}) \stackrel{\eqref{form33}}{\geq} 2^{\xi \zeta m|\mathcal{J}|}. \end{equation}
In particular, we may write
\begin{equation}\label{form48} R_{A'}^{m}(\mathcal{J}) = |\mathcal{A}_{I_{0}}| = 2^{\gamma_{A}m|\mathcal{J}|} \quad \text{and} \quad |\mathcal{B}_{J_{0}}| = 2^{\gamma_{B}m|\mathcal{J}|} \end{equation}
for some $0 \leq \gamma_{A} \leq \Gamma$ and $\gamma_{B} \geq \xi \zeta$. Then, write 
\begin{displaymath} Q_{0} := I \times J \in \mathcal{D}_{m(t - r)}(\mu), \, n := m|\mathcal{J}| = m(r + 1) \quad \text{and} \quad \Delta := 2^{-n} = \frac{2^{-m(t + 1)}}{2^{-m(t - r)}} \geq \delta. \end{displaymath}
Consider the normalised measure $\mu^{Q_{0}}$, as in \eqref{form45}. The measure $\mu^{Q_{0}}$ is supported on a product of the form $(\cup \mathcal{A}) \times (\cup \mathcal{B}) = T_{Q_{0}}((\cup \mathcal{A}_{I_{0}}) \times (\cup \mathcal{B}_{J_{0}}))$, where $\mathcal{A},\mathcal{B}$ are the families of $\Delta$-intervals obtained by normalising the intervals in $\mathcal{A}_{I_{0}}$ and $\mathcal{B}_{J_{0}}$ by a factor of $2^{m(t - r)}$. It follows from the $(m,\ell N)$-uniformity of $A'$ and $B''$ that 
\begin{displaymath} \mu^{Q_{0}}(Q) \leq (|\mathcal{A}||\mathcal{B}|)^{-1}  = (|\mathcal{A}_{I_{0}}||\mathcal{B}_{J_{0}}|)^{-1} \stackrel{\eqref{form48}}{=} \Delta^{\gamma_{A} + \gamma_{B}}, \qquad Q \in \mathcal{D}_{n}. \end{displaymath}
Moreover, the intervals in $\mathcal{B}$ satisfy the following separation property by Lemma \ref{lemma5}:
\begin{displaymath} I_{1},I_{2} \in \mathcal{B}, \, I_{1} \neq I_{2} \quad \Longrightarrow \quad \dist(I_{1},I_{2}) \geq \Delta^{2\xi}. \end{displaymath}
These facts place us in a position to apply Corollary \ref{cor3} to the measure $\mu^{Q_{0}}$:
\begin{equation}\label{form46} \int_{-1}^{1} H(\pi_{c}\mu^{Q_{0}},\mathcal{D}_{n}) \, d\nu(c) \geq n \cdot \min\{\gamma_{A} + \gamma_{B},\gamma - 2\xi\} - \log 40 - \log C_{0}. \end{equation} 
The parameter "$\xi$" was initially chosen (see \eqref{defEpsilon}) so that $2\xi \leq (\gamma - \Gamma)/2$. Since $\gamma_{A} \leq \Gamma$, this leads to
\begin{displaymath} \gamma - 2\xi \geq \gamma_{A} + (\gamma - \Gamma) - 2\xi \geq \gamma_{A} + (\gamma - \Gamma)/2. \end{displaymath}
Recalling also that $\gamma_{B} \geq \xi \zeta$ by \eqref{form43}, and $\zeta \in (0,1)$ (see \eqref{zeta} for a reminder), we find
\begin{equation}\label{form49} \min\{\gamma_{A} + \gamma_{B},\gamma - 2\xi\} \geq \min\{\gamma_{A} + \xi \zeta,\gamma_{A} + (\gamma - \Gamma)/2\} = \gamma_{A} + \xi \zeta.  \end{equation}
Before the final conclusion, let us recall that $n = m(r + 1) = m|\mathcal{J}|$, and observe that
\begin{displaymath} \gamma_{A} \cdot n = \gamma_{A} \cdot m|\mathcal{J}| \stackrel{\eqref{form48}}{=} \log R_{A'}^{m}(\mathcal{J}). \end{displaymath}
Therefore, \eqref{form46}-\eqref{form49} yield
\begin{align} \int_{-1}^{1} H(\pi_{c}\mu^{Q_{0}},\mathcal{D}_{m|\mathcal{J}|}) \, d\nu(c) & \geq n \cdot (\gamma_{A} + \xi \zeta) - \log 40 - \log C_{0} \notag\\
&\label{form47} = \log R_{A'}^{m}(\mathcal{J}) + \xi \zeta \cdot m|\mathcal{J}| - \log 40 - \log C_{0} \end{align}
for all $\mathcal{J} = \{t - r,\ldots,t\} \in \mathcal{N}_{+}^{\mathrm{low}}$ and for all $Q_{0} = I \times J \in \mathcal{D}_{m(t - r)}(\mu)$.

\subsection{Final multiscale argument} As in the previous section, let $\mu_{A'}$ be the normalised counting measure on the set $A'$, let $\mu_{B''}$ be the normalised counting measure on the set $B''$, and let $\mu = \mu_{A'} \times \mu_{B''}$. Recall also that $\mathcal{D}_{ms}(\mu) = \{Q \in \mathcal{D}_{ms} : \mu(Q) > 0\}$ for $s \in [\ell N]$. We warn the reader that the notation "$\mathcal{D}_{ms}$" will in this section refer to both dyadic squares in $\R^{2}$, and dyadic intervals in $\R$. The meaning should always be clear from context. 

The purpose fo this section is to show that there exists $c \in \spt(\nu)$ such that
\begin{equation}\label{form50} \tfrac{1}{\ell m N} \cdot H(\pi_{c}\mu,\mathcal{D}_{\ell m N}) \geq \bar{\alpha} + \xi \zeta \cdot \tfrac{1}{2}[(1 - \beta) - (\alpha - \beta)/\Gamma] - 2\eta. \end{equation}
Here $\bar{\alpha}$ was the constant (defined in \eqref{barAlpha}) such that $|A| = \delta^{-\bar{\alpha}}$. The lower bound in \eqref{form50} yields a lower bound for $|A' + cB''|_{\delta}$, and consequently $|A + cB|_{\delta}$: since $H(\pi_{c}\mu,\mathcal{D}_{\ell m N}) \leq \log |A' + cB''|_{\delta} \leq \log |A + cB|_{\delta}$, and $\ell m N = -\log \delta$, we deduce from \eqref{form50} that 
\begin{displaymath} \frac{\log |A + cB|_{\delta}}{-\log \delta} \geq  \bar{\alpha} + \xi \zeta \cdot \tfrac{1}{2}[(1 - \beta) - (\alpha - \beta)/\Gamma] - 2\eta. \end{displaymath}
If $\eta > 0$ is sufficiently small, depending only on $\alpha,\beta,\gamma,\kappa$, this implies $|A + cB|_{\delta} \geq \delta^{-\bar{\alpha} - \eta} = \delta^{-\eta}|A|$. Of course it is important here that the values of $\xi = (\gamma - \Gamma)/4$ (see \eqref{defEpsilon}) and $\zeta > 0$ (see \eqref{zeta}) are independent of $\bar{\alpha}$, although they may depend on $\alpha$. This proves Theorem \ref{mainTechnical4}: either \eqref{counterAss} fails, and $|A + cB|_{\delta} \geq \delta^{-\epsilon}|A|$ with $c = 1 \in \spt(\nu)$, or \eqref{counterAss} holds, and in this case $|A + cB|_{\delta} \geq \delta^{-\eta}|A|$ for the point $c \in \spt(\nu)$ provided by \eqref{form50}.

It remains to prove \eqref{form50}. This will be accomplished by combining \eqref{form47} with the following uniform lower bound:
\begin{lemma}\label{lemma6} Let $\mathcal{J} = \{s,\ldots,t\} \subset [\ell N]$, and let $Q_{0} \in \mathcal{D}_{ms}(\mu)$. Then,
\begin{equation}\label{form41} H(\pi_{c}\mu^{Q_{0}},\mathcal{D}_{m|\mathcal{J}|}) \geq \log R_{A'}^{m}(\mathcal{J}) - 1, \qquad c \in [0,1]. \end{equation} \end{lemma}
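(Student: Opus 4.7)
The plan is to exploit the product structure of $\mu^{Q_{0}}$ and the convexity of entropy. Write $Q_{0} = I \times J$, and let $\nu_{A}$ and $\nu_{B}$ denote the rescaled normalised counting measures on $A' \cap I$ and $B'' \cap J$, respectively, both viewed as probability measures on $[0,1)$ after conjugation by $T_{I}$ and $T_{J}$. Since $\mu = \mu_{A'} \times \mu_{B''}$ is a product, and $Q_{0} = I \times J$, the normalised rescaling $\mu^{Q_{0}}$ factors as $\mu^{Q_{0}} = \nu_{A} \times \nu_{B}$ (a product of two $1$-dimensional measures, viewed as a measure on $\R^{2}$). Consequently $\pi_{c}\mu^{Q_{0}}$ is the law of $X + cY$, with $X \sim \nu_{A}$ and $Y \sim \nu_{B}$; that is,
\begin{displaymath} \pi_{c} \mu^{Q_{0}} = \nu_{A} \ast (c \cdot \nu_{B}), \end{displaymath}
where $c \cdot \nu_{B}$ denotes the push-forward of $\nu_{B}$ under $y \mapsto cy$.

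Next I would apply the convexity inequality \eqref{form84} to the convolution above, with $\mu$ replaced by $\nu_{A}$ and $\nu$ replaced by $c \cdot \nu_{B}$. This yields
\begin{displaymath} H(\pi_{c}\mu^{Q_{0}},\mathcal{D}_{m|\mathcal{J}|}) \geq \int H((\nu_{A})_{y},\mathcal{D}_{m|\mathcal{J}|}) \, d(c \cdot \nu_{B})(y), \end{displaymath}
where $(\nu_{A})_{y}$ is the translate $(\nu_{A})_{y}(H) = \nu_{A}(H - y)$. Each translate $(\nu_{A})_{y}$ is an isometric copy of $\nu_{A}$, so $H((\nu_{A})_{y},\mathcal{D}_{m|\mathcal{J}|} + y) = H(\nu_{A},\mathcal{D}_{m|\mathcal{J}|})$. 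The two partitions $\mathcal{D}_{m|\mathcal{J}|}$ and $\mathcal{D}_{m|\mathcal{J}|} + y$ are partitions of $\R$ into intervals of equal length $2^{-m|\mathcal{J}|}$, and every interval from one partition meets at most $2$ intervals from the other. Hence the fact from the preliminaries comparing two partitions with $N = 2$ gives
\begin{displaymath} |H((\nu_{A})_{y},\mathcal{D}_{m|\mathcal{J}|}) - H((\nu_{A})_{y},\mathcal{D}_{m|\mathcal{J}|} + y)| \leq \log 2 = 1, \end{displaymath}
and therefore $H((\nu_{A})_{y},\mathcal{D}_{m|\mathcal{J}|}) \geq H(\nu_{A},\mathcal{D}_{m|\mathcal{J}|}) - 1$ for every $y \in \R$.

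To finish, I would compute $H(\nu_{A},\mathcal{D}_{m|\mathcal{J}|})$ directly from the $(m,\ell N)$-uniformity of $A'$. The rescaling $T_{I}$ maps the collection $\{I' \in \mathcal{I}_{m(t+1)}(A') : I' \subset I\}$ bijectively onto a family of $R_{A'}^{m}(\mathcal{J})$ distinct dyadic intervals in $\mathcal{D}_{m|\mathcal{J}|}$, and $\nu_{A}$ distributes mass uniformly among them. Thus $H(\nu_{A},\mathcal{D}_{m|\mathcal{J}|}) = \log R_{A'}^{m}(\mathcal{J})$, and combining the inequalities produces
\begin{displaymath} H(\pi_{c}\mu^{Q_{0}},\mathcal{D}_{m|\mathcal{J}|}) \geq \log R_{A'}^{m}(\mathcal{J}) - 1, \end{displaymath}
uniformly in $c$. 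The only point requiring care is the translation estimate: it is essentially sharp (a single translation can shuffle atomic mass across a dyadic boundary), and accounts for the "$-1$" in the conclusion. No use is made here of any assumption on $c$ beyond $c \in \R$, so the bound in fact holds for all $c$.
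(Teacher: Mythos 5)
Your proposal is correct and follows essentially the same route as the paper: factor $\mu^{Q_{0}}$ as a product, write $\pi_{c}\mu^{Q_{0}}$ as a convolution, apply the concavity inequality \eqref{form84}, compare the translated dyadic partition to $\mathcal{D}_{m|\mathcal{J}|}$ at a cost of $\log 2 = 1$, and evaluate the entropy of the uniform measure exactly via the $(m,\ell N)$-uniformity of $A'$. Your treatment is if anything slightly more careful than the paper's, which writes the convolution without the dilation by $c$ and contains a typo ($R_{A'}^{m}(\mathcal{J})^{-1}$ where $\log R_{A'}^{m}(\mathcal{J})$ is meant); your version has neither issue.
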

\begin{proof} Let $I \in \mathcal{I}_{ms}(A')$ and $J \in \mathcal{I}_{ms}(B'')$ such that $Q_{0} = I \times J$. Then $\mu^{Q_{0}} = \mu_{A'}^{I} \times \mu_{B''}^{J}$, hence $\pi_{c}\mu^{Q} = \mu_{A'}^{I} \ast \mu_{B''}^{J}$, and finally
\begin{equation}\label{form68} H(\pi_{c}\mu^{Q_{0}},\mathcal{D}_{m|\mathcal{J}|}) = H(\mu_{A'}^{I} \ast \mu_{B''}^{J},\mathcal{D}_{m|\mathcal{J}|}) \geq \int H((\mu_{A'}^{I})_{x},\mathcal{D}_{m|\mathcal{J}|}) \, d\mu_{B''}^{J}(x), \end{equation} 
where the inequality follows from the concavity of entropy (we discussed this at \eqref{form84}), and where $(\mu_{A'})^{I}_{x}(H):= \mu_{A'}^{I}(H - x)$ for $H \subset \R$. From the definition of entropy, one has
\begin{displaymath} H((\mu_{A'}^{I})_{x},\mathcal{D}_{m|\mathcal{J}|}) = H(\mu_{A'}^{I},\mathcal{D}_{m|\mathcal{J}|} - x), \end{displaymath}
where $\mathcal{D}_{m|\mathcal{J}|} - x$ refers to the family of $(-x)$-translated dyadic intervals. Now, for $x \in \R$ fixed, every intervals in $\mathcal{D}_{m|\mathcal{J}|}$ can be covered by $2$ intervals in $\mathcal{D}_{m|\mathcal{J}|} - x$ and vice versa. This implies that
\begin{equation}\label{form69} |H(\mu_{A'}^{I},\mathcal{D}_{m|\mathcal{J}|} - x) - H(\mu_{A'}^{I},\mathcal{D}_{m|\mathcal{J}|})| \leq \log 2 = 1, \qquad x \in \R. \end{equation}
Furthermore, by definition,
\begin{displaymath} H(\mu_{A'}^{I},\mathcal{D}_{m|\mathcal{J}|}) = - \sum_{L \in \mathcal{D}_{m|\mathcal{J}|}} \mu_{A'}^{I}(L) \log \mu_{A'}^{I}(L). \end{displaymath}
Since $A'$ is $(m,\ell N,R_{A'}^{m})$-uniform, either $\mu_{A'}^{I}(L) = 0$, or then $\mu_{A'}^{I}(L) = R_{A'}^{m}(\mathcal{J})^{-1}$ for every $L \in \mathcal{D}_{m|\mathcal{J}|}$. Therefore
\begin{displaymath} H(\mu_{A'}^{I},\mathcal{D}_{m|\mathcal{J}}) = R_{A'}^{m}(\mathcal{J})^{-1}. \end{displaymath}
In combination with \eqref{form68}-\eqref{form69}, this yields \eqref{form41}. \end{proof}

Recall the intervals $\mathcal{N}_{+}^{\mathrm{low}} \subset \mathcal{N}_{+}$, defined in \eqref{form29}. In this section, the properties of these intervals will be used via the formula \eqref{form47}, and we additionally need to recall that
\begin{equation}\label{form40} \sum_{\mathcal{J} \in \mathcal{N}_{+}^{\mathrm{low}}} |\mathcal{J}| \geq \tfrac{1}{2} [(1 - \beta) - (\alpha - \beta)/\Gamma]  \cdot \ell N \end{equation}
by \eqref{form30}. Let $\mathcal{P}$ be the partition of $[\ell N]$ which is induced by the intervals in $\mathcal{N}_{+}^{\mathrm{low}} $. In other words, $\mathcal{P}$ consists of the intervals in $\mathcal{N}_{+}^{\mathrm{low}}$, and the maximal complementary intervals. We write
\begin{displaymath} \mathcal{P}_{\mathrm{useless}} := \mathcal{P} \, \setminus \, \mathcal{N}_{+}^{\mathrm{low}},  \end{displaymath}
and we enumerate $\mathcal{P} = \{\mathcal{J}_{1},\mathcal{J}_{2},\ldots,\mathcal{J}_{h}\}$, where $1 \leq h \leq \ell N$. We write $\mathcal{J}_{j} = \{s_{j},\ldots,t_{j}\}$ for $1 \leq j \leq h$, so $s_{1} = 0$, $t_{h} + 1 = \ell N$, and $s_{j + 1} = t_{j} + 1$ for all $1 \leq j < h$. We artificially define $s_{h + 1} := \ell N$, so the relation $s_{j + 1} = t_{j} + 1$ also remains valid for $j = h$.  

We abbreviate
\begin{displaymath} \mathcal{D}_{j} := \mathcal{D}_{ms_{j}}(\mu) := \{I \times J : I \in \mathcal{I}_{ms_{j}}(A') \text{ and } J \in \mathcal{I}_{ms_{j}}(B'')\}. \end{displaymath}
We then apply the entropy lower bound in Lemma \ref{entropyLemma}, and its corollary \eqref{entropyIneq}, to the partition $0 = ms_{1} < \ldots < ms_{h} < ms_{h + 1} = \ell m N$ of $\{0,\ldots,\ell m N\}$, and the $2$-Lipschitz maps $\pi_{c} \colon \R^{2} \to \R$ with $c \in [-1,1]$:
\begin{align} \int_{-1}^{1} H(\pi_{c}\mu, \mathcal{D}_{\ell m N}) & \, d\nu(c) = \sum_{j = 1}^{h} \sum_{Q \in \mathcal{D}_{j}} \mu(Q) \int_{-1}^{1} H(\pi_{c}\mu^{Q},\mathcal{D}_{ms_{j + 1} - ms_{j}} \mid \mathcal{D}_{0}) \, d\nu(c) \notag \\
&\label{form51} \geq -C_{0}h + \sum_{j = 1}^{h} \sum_{Q \in \mathcal{D}_{j}} \mu(Q) \int_{-1}^{1} H(\pi_{c}\mu^{Q},\mathcal{D}_{m(t_{j} - s_{j} + 1)}) \, d\nu(c). \end{align} 
Above, $t_{j} - s_{j} + 1 = |\mathcal{J}_{j}|$. For $\mathcal{J}_{j} \in \mathcal{N}_{+}^{\mathrm{low}}$, and $Q \in \mathcal{D}_{j}$, we recall from \eqref{form47} that
\begin{displaymath} \int_{-1}^{1} H(\pi_{c}\mu^{Q},\mathcal{D}_{m|\mathcal{J}_{j}|}) \, d\nu(c) \geq \log R_{A'}^{m}(\mathcal{J}_{j}) + \xi \zeta \cdot m|\mathcal{J}_{j}| - \log 40 - \log C_{0}. \end{displaymath}
For $\mathcal{J} \in \mathcal{P}_{\mathrm{useless}}$ we have to settle with the estimate
\begin{displaymath} \int_{0}^{1} H(\pi_{c}\mu^{Q},\mathcal{D}_{m|\mathcal{J}_{j}|}) \, d\nu(c) \geq \log R_{A'}^{m}(\mathcal{J}_{j}) - 1 \end{displaymath}
from Lemma \ref{lemma6}. Plugging these bounds into \eqref{form51} (and redefining $C_{0}$ as $C_{0} + 1$) yields
\begin{align*} \int_{-1}^{1} H(\pi_{c}\mu,\mathcal{D}_{\ell m N}) \, d\nu(c) & \geq -(40 + C_{0})h + \sum_{\mathcal{J} \in \mathcal{P}} \log R_{A'}^{m}(\mathcal{J}) + \xi \zeta \sum_{\mathcal{J} \in \mathcal{N}_{+}^{\mathrm{low}}} |\mathcal{J}|\\
& \stackrel{\eqref{form40}}{\geq} \log |A'| + \xi \zeta \cdot \tfrac{1}{2} [(1 - \alpha) - (\alpha - \beta)/\Gamma] \cdot \ell m N - h(40 + C_{0}). \end{align*}
Recalling that $|A'| \geq \delta^{\eta}|A| \geq 2^{(\bar{\alpha} - \eta)\ell m N}$, there exists $c \in \spt(\nu)$ with 
\begin{displaymath} H_{\ell m N}(\pi_{c}\mu) \geq \left( \bar{\alpha} + \xi \zeta \cdot \tfrac{1}{2} [(1 - \alpha) - (\alpha - \beta)/\Gamma] - \eta \right) - \tfrac{h(40 + C_{0})}{\ell m N} \end{displaymath}
Here $h(40 + C_{0})/(\ell m N) \leq (40 + C_{0})/m_{0} \leq \eta$ by the choice of $m_{0}$ at \eqref{allParameters}, and since we chose $m \geq m_{0}$ in Proposition \ref{prop1}. Therefore we have established \eqref{form50}, and completed the proof of Theorem \ref{mainTechnical4}.

\section{Hausdorff dimension estimates}\label{appA}

The purpose of this final section is to reduce Theorem \ref{main} to Theorem \ref{mainTechnical}, and to use Theorem \ref{main} to prove the Hausdorff dimension result, Corollary \ref{hausdorffCor}. 

\begin{remark} The threshold $\gamma > (\alpha - \beta)/(1 - \beta)$ familiar from Theorems \ref{main} and \ref{mainTechnical} plays no particular role in this section: if we knew that Theorem \ref{mainTechnical} holds for all $\gamma \in (\tau,1]$ for some parameter $\tau = \tau(\alpha,\beta) \in (0,1)$, then the argument would below would show that Theorem \ref{main} also holds for $\gamma > \tau$. This is relevant to know if one eventually manages to solve Conjecture \ref{mainConjecture}, and proves Theorem \ref{mainTechnical} with threshold $\tau(\alpha,\beta) = \alpha - \beta$.

\end{remark}

\subsection{Reducing Theorem \ref{main} to Theorem \ref{mainTechnical}: outline}\label{appB}

The reduction from Theorem \ref{main} to Theorem \ref{mainTechnical} proceeds in several stages. First, in Section \ref{s:toy}, we prove the following toy version of Theorem \ref{main}: instead of allowing for general subsets of the form $G \subset A \times B$ with $|G| \geq \delta^{\epsilon}|A||B|$, this version (Theorem \ref{mainSubset1}) only treats subsets of the form $G = A \times B'$ with $|B'| \geq \delta^{\epsilon}|B|$. The conclusion is that there exists $c \in \spt(\nu)$ such that $|A + cB'| \geq \delta^{-\epsilon}|A|$ for all $B' \subset B$ with $|B'| \geq \delta^{\epsilon}|B|$. 

Even the toy version, Theorem \ref{mainSubset1}, is not proved directly: we will pass through a toy-toy version, Theorem \ref{mainSubset2}, where we are first allowed to replace $A \times B$ by a subset of the form $A \times \bar{B}$, and then the conclusion explained above is established for $A \times \bar{B}$ in place of $A \times B$. Fortunately, the passage between the toy and toy-toy versions can be accomplished by a formal exhaustion argument, which I learned from He's paper \cite{MR4148151}.

The toy-toy version is eventually deduced, in Section \ref{s:subsetReductionB}, by a direct argument from the main Theorem \ref{mainTechnical}. This is the heart of the matter. Instead of giving details here, I mention a key difficulty: this reduction, and various other steps of the argument would be simpler if we \emph{a priori} knew that 
\begin{equation}\label{form24} |A + A| \approx |A| \quad \text{and} \quad |B + B| \approx |B|. \end{equation}
(In this heuristic discussion, I will leave the meaning of "$\approx$" to the reader's imagination.) In the case $|A| \approx |B|$, treated by Bourgain in \cite{Bourgain10}, this is automatic: if $|A + cB|_{\delta} \approx |A| \approx |B|$ for some $c \in [\tfrac{1}{2},1]$, then \eqref{form24} holds by Pl\"unnecke's inequality. However, in our situation $B$ is typically much smaller than $A$, and now the property $|A + cB|_{\delta} \approx |A|$ implies neither property in \eqref{form24}. Nevertheless, \eqref{form24} is needed, technically because Lemma \ref{OVLemma} is useless without \eqref{form24}. Roughly speaking, Theorem \ref{mainSubset2} is proved by making a counter assumption, and using it to generate new sets $\bar{A} \neq A$ and $\bar{B} \neq B$ which satisfy the original hypotheses, and additionally \eqref{form24}. At some level, this argument is reminiscent of the proof of the asymmetric Balog-Szemer\'edi-Gowers theorem in \cite{MR2289012} (see Theorem \ref{BSG}).

Once we have the toy version, Theorem \ref{mainSubset1}, at our disposal, it remains to deduce Theorem \ref{main} from Theorem \ref{mainSubset1}. This step is based on the asymmetric Balog-Szemer\'edi-Gowers theorem -- unlike the other steps. We make a counter assumption that for every $c \in \spt(\nu)$ there exists a subset $G_{c} \subset A \times B$ with $|G| \gtrapprox |A||B|$ such that $|\pi_{c}(G)|_{\delta} \lessapprox |A|$. By the B-S-G theorem, this yields for every $c \in \spt(\nu)$ subsets $A_{c} \subset A$ and $B_{c} \subset B$ such that $|A_{c}| \gtrapprox |A|$, $|B_{c}| \gtrapprox |B|$, and $|A_{c} + cB_{c}|_{\delta} \lessapprox |A|$. With the help of probabilistic arguments, and the Pl\"unnecke-Ruzsa inequality (Lemma \ref{PRIneq}), this allows us to construct a new $\delta$-separated set $H \subset [0,1]$ with $|H| \lessapprox |A|$, and a subset $C \subset \spt(\nu)$ with $\nu(C) \gtrapprox 1$, such that $|H + cB_{c}|_{\delta} \lessapprox |H|$ for all $c \in C$. This violates the first toy version, Theorem \ref{mainSubset1}, applied to $H,B$ and finally concludes the proof of Theorem \ref{main}.

\subsection{A toy version}\label{s:toy}

Theorem \ref{main} claims the existence of $c \in \spt(\nu)$ such that $|\pi_{c}(G)| \geq \delta^{-\epsilon}|A|$ for all $G \subset A \times B$ with $|G| \geq \delta^{\epsilon}|A||B|$. A toy problem is to find $c \in \spt(\nu)$ such that $|A + cB'|_{\delta} \geq \delta^{-\epsilon}|A|$ for all $B' \subset B$ with $|B'| \geq \delta^{\epsilon}|B|$. Instead of approaching Theorem \ref{main} directly, we will first solve this toy problem:

\begin{thm}\label{mainSubset1} Let $0 < \beta \leq \alpha < 1$ and $\kappa > 0$. Then, for every $\gamma \in ((\alpha - \beta)/(1 - \beta),1]$, there exist $\epsilon_{0},\epsilon,\delta_{0} \in (0,\tfrac{1}{2}]$, depending only on $\alpha,\beta,\gamma,\kappa$, such that the following holds. Let $\delta \in 2^{-\N}$ with $\delta \in (0,\delta_{0}]$, and let $A,B \subset (\delta \cdot \Z) \cap [0,1]$ satisfy the following hypotheses:
\begin{enumerate}
\item[(A)] \label{A} $|A| \leq \delta^{-\alpha}$.
\item[(B)] \label{B} $|B| \geq \delta^{-\beta}$, and $B$ satisfies the following Frostman condition: 
\begin{displaymath} |B \cap B(x,r)| \leq r^{\kappa}|B|, \qquad \delta \leq r \leq \delta^{\epsilon_{0}}. \end{displaymath} 
\end{enumerate}
Further, let $\nu$ be a Borel probability measure with $\spt (\nu) \subset [0,1]$, and satisfying the Frostman condition $\nu(B(x,r)) \leq r^{\gamma}$ for $x \in \R$ and $0 < r \leq \delta^{\epsilon_{0}}$. Then, there exists $c \in \spt(\nu)$ such that if $B' \subset B$ satisfies $|B'| \geq \delta^{\epsilon}|B|$, then $|A + cB'| \geq \delta^{-\epsilon}|A|$. \end{thm}

\subsection{Reduction to a weaker toy theorem}\label{s:toytoyReduction} Even Theorem \ref{mainSubset1} is hard to prove with a direct assault. We will first need to reduce it to an even weaker version. In the statement, we use the following notation (slightly adapted) from He's paper \cite{MR4148151}. Given two sets $A,B \subset [0,1] \cap (\delta \cdot \Z)$, we write
\begin{displaymath} \mathcal{E}(A \mid B,\epsilon) := \{c \in \R : \exists \, B' \subset B \text{ such that } |B'| \geq \delta^{\epsilon}|B| \text{ and } |A + cB'|_{\delta} < \delta^{-\epsilon}|A|\}. \end{displaymath} 

\begin{thm}\label{mainSubset2} Let $0 < \beta \leq \alpha < 1$ and $\kappa,\theta > 0$. Then, for every $\gamma \in ((\alpha - \beta)/(1 - \beta),1]$, there exist $\epsilon_{0},\epsilon,\delta_{0} \in (0,\tfrac{1}{2}]$, depending only on $\alpha,\beta,\gamma,\kappa$, such that the following holds. Let $\delta \in 2^{-\N}$ with $\delta \in (0,\delta_{0}]$, and let $A,B \subset (\delta \cdot \Z) \cap [0,1]$ satisfy the following hypotheses:
\begin{enumerate}
\item[(A)] \label{A} $|A| \leq \delta^{-\alpha}$.
\item[(B)] \label{B} $|B| \geq \delta^{-\beta}$, and $B$ satisfies the following Frostman condition: 
\begin{displaymath} |B \cap B(x,r)| \leq r^{\kappa}|B|, \qquad \delta \leq r \leq \delta^{\epsilon_{0}}. \end{displaymath} 
\end{enumerate}
Further, let $\nu$ be a Borel probability measure with $\spt (\nu) \subset [0,1]$, and satisfying the Frostman condition $\nu(B(x,r)) \leq r^{\gamma}$ for $x \in \R$ and $0 < r \leq \delta^{\epsilon_{0}}$. Then, there exists a subset $B' \subset B$ such that $\nu(\mathcal{E}(A \mid B',\epsilon)) \leq \delta^{\epsilon}$. \end{thm}

I learned this reduction from the paper of He \cite[Proposition 25]{MR4148151}, and his proof works here, up to modifying the notation. The full details are recorded below nonetheless.

\begin{proof}[Proof of Theorem \ref{mainSubset1} assuming Theorem \ref{mainSubset2}] Let $\alpha,\beta,\gamma,\kappa$ be the parameters given in Theorem \ref{mainSubset1}, so that $\gamma > (\alpha - \beta)/(1 - \beta)$. Our task is to find the constants $\epsilon,\epsilon_{0},\delta_{0} \in (0,\tfrac{1}{2}]$, depending only on $\alpha,\beta,\gamma,\kappa$. Start by applying Theorem \ref{mainSubset2} with parameters $\alpha,\bar{\beta},\gamma,\bar{\kappa}$, where $\bar{\kappa} \in (0,\kappa)$ is arbitrary, and also and $\bar{\beta} < \beta$ is arbitrary with the property that the key inequality
\begin{displaymath} \gamma > (\alpha - \bar{\beta})/(1 - \bar{\beta}) \end{displaymath}
remains valid. Let $\bar{\epsilon},\bar{\epsilon}_{0},\bar{\delta}_{0} \in (0,\tfrac{1}{2}]$ be the constants given by Theorem \ref{mainSubset2}, associated to the parameters $\alpha,\bar{\beta},\gamma,\bar{\kappa}$. We define
\begin{equation}\label{form6} \epsilon_{0} := \bar{\epsilon}_{0} \quad \text{and} \quad \epsilon := \min\left\{\frac{\bar{\epsilon}}{2},\frac{(\kappa - \bar{\kappa})\bar{\epsilon}_{0}}{4},\frac{\beta - \bar{\beta}}{2} \right\}. \end{equation}
We assume that $\delta_{0} \leq \bar{\delta}_{0}$, and there will be a few additional requirements, where for example $\delta \leq \delta_{0}$ needs to be taken small enough relative to the difference $\bar{\epsilon} - \epsilon$. I will not gather these requirements together; they will be pointed out where they appear.

Let $\delta \in 2^{-\N}$ with $\delta \leq \delta_{0}$, and let $A,B,\nu$ be the objects from Theorem \ref{mainSubset1}, satisfying the assumptions of that theorem with constants $\alpha,\beta,\kappa,\gamma$, and $\epsilon_{0},\delta_{0}$ as above. In particular,
\begin{equation}\label{form7} |B| \geq \delta^{-\beta} \quad \text{and} \quad |B \cap B(x,r)| \leq r^{\kappa}|B| \text{ for } x \in \R \text{ and } \delta \leq r \leq \delta^{\epsilon_{0}}. \end{equation}
Evidently $A,B,\nu$ also satisfy the hypotheses of Theorem \ref{mainSubset2} with constants $\alpha,\bar{\beta},\gamma,\kappa/2$, and $\bar{\epsilon}_{0}$. We now perform an "exhaustion" argument to construct a finite sequence of disjoint subsets $B_{1},\ldots,B_{N} \subset B$ with the property
\begin{equation}\label{form20} \nu(\mathcal{E}(A \mid B_{j},\bar{\epsilon})) \leq \delta^{\bar{\epsilon}}, \qquad 1 \leq j \leq N. \end{equation}
Let $B_{1} \subset B$ be the set given initially by Theorem \ref{mainSubset2}. We then assume inductively that we have already constructed disjoint $B_{1},\ldots,B_{n} \subset B$ for some $n \geq 1$. There are two options:
\begin{equation}\label{form1a} \Big| B \, \setminus \, \bigcup_{j = 1}^{n} B_{j} \Big| < \delta^{2\epsilon}|B| \quad \text{or} \quad \Big| B \, \setminus \, \bigcup_{j = 1}^{n} B_{j} \Big| \geq \delta^{2\epsilon}|B|.  \end{equation} 
In the former case, the inductive construction terminates, and we define $N := n$. In the latter case, we apply Theorem \ref{mainSubset2} to the objects $A,\nu$, and $B' := B \, \setminus \, \bigcup_{j = 1}^{n} B_{j}$. This is legitimate, because $|B'| \geq \delta^{2\epsilon}|B| \geq \delta^{-\beta - 2\epsilon} \geq \delta^{-\bar{\beta}}$, and
\begin{displaymath} |B' \cap B(x,r)| \stackrel{\eqref{form7}}{\leq} r^{\kappa}|B| \leq \delta^{-2\epsilon}r^{\kappa}|B'| \stackrel{\eqref{form6}}{\leq} r^{\bar{\kappa}}|B'|, \qquad x \in \R, \, \delta \leq r \leq \delta^{\epsilon_{0}} = \delta^{\bar{\epsilon}_{0}}. \end{displaymath}
Therefore $A,B',\nu$ satisfy the hypotheses of Theorem \ref{mainSubset2} with constants $\alpha,\bar{\beta},\bar{\kappa},\gamma,\bar{\epsilon}_{0}$. Consequently, there exists a further subset $B_{n + 1} \subset B' = B \, \setminus \, \bigcup_{j = 1}^{n} B_{j}$ with the property $\nu(\mathcal{E}(A \mid B_{n + 1},\bar{\epsilon})) \leq \delta^{\bar{\epsilon}}$. This completes the inductive construction of the sequence $B_{1},\ldots,B_{N}$. The construction terminates in $\leq \delta^{-\bar{\epsilon}}$ steps, because the sets $B_{j}$ satisfy $|B_{j}| \geq \delta^{-\bar{\epsilon}}$. Indeed, since $\nu(\mathcal{E}(A \mid B_{j},\bar{\epsilon})) < 1$, there exists $c \in \spt(\nu) \, \setminus \, \mathcal{E}(A \mid B_{j},\bar{\epsilon})$, and then $|A||B_{j}| \geq |A + cB_{j}|_{\delta} \geq \delta^{-\bar{\epsilon}}|A|$.

When the inductive procedure eventually terminates, we write $B_{0} := \bigcup_{j = 1}^{N} B_{j}$. By \eqref{form1a}, we have $|B \, \setminus \, B_{0}| < \delta^{2\epsilon}|B|$. Now, note that the claim of Theorem \ref{mainSubset1} is equivalent to proving that $\spt(\nu) \, \setminus \, \mathcal{E}(A \mid B,\epsilon) \neq \emptyset$. We will prove this by showing that $\mathcal{E}(A \mid B,\epsilon)$ has small $\nu$ measure. The first step is to establish the following inclusion:
\begin{equation}\label{form19} \mathcal{E}(A \mid B,\epsilon) \subset \bigcup_{\mathcal{J}} \bigcap_{j \in \mathcal{J}} \mathcal{E}(A \mid B_{j},\bar{\epsilon}), \end{equation}
where the index set $\mathcal{J}$ runs over all subsets of $\{1,\ldots,N\}$ with $\sum_{j \in \mathcal{J}} |B_{j}| \geq \delta^{\epsilon}|B|/4$. The proof is nearly verbatim the same as in \cite[Proposition 25]{MR4148151}, but I record the details here for completeness. If $c \in \mathcal{E}(A \mid B,\epsilon)$, then by definition there exists a subset $B_{c} \subset B$ with $|B_{c}| \geq \delta^{\epsilon}|B|$ and $|A + cB_{c}|_{\delta} < \delta^{-\epsilon}|A|$. Let $\mathcal{J} := \{1 \leq j \leq N : |B_{c} \cap B_{j}| \geq \delta^{\bar{\epsilon}}|B_{j}|\}$. Then $c \in \mathcal{E}(A \mid B_{j},\bar{\epsilon})$ for all $j \in \mathcal{J}$, since $B_{j}' := B_{c} \cap B_{j} \subset B_{j}$ satisfies $|B_{j}'| \geq \delta^{\bar{\epsilon}}|B_{j}|$ and $|A + cB_{j}'|_{\delta} < \delta^{-\epsilon}|A| \leq \delta^{-\bar{\epsilon}}|A|$. This proves \eqref{form19}, once we verify that $\sum_{j \in \mathcal{J}} |B_{j}| \geq \delta^{\epsilon}|B|/4$. 

To see this, recall that $|B \, \setminus \, B_{0}|  \leq \delta^{2\epsilon}|B|$. This implies that $B_{c}$ has large intersection with $B_{0}$ (assuming that $\delta > 0$ is sufficiently small):
\begin{displaymath} |B_{c} \cap B_{0}| \geq \tfrac{1}{2} \cdot \delta^{\epsilon}|B|. \end{displaymath}
Then, if $\delta > 0$ is small enough, and recalling that $\epsilon \leq \bar{\epsilon}/2$, we have
\begin{align*} \tfrac{1}{2} \cdot \delta^{\epsilon}|B| \leq |B_{c} \cap B_{0}| = \sum_{j = 1}^{N} |B_{c} \cap B_{j}| \leq \sum_{j \notin \mathcal{J}} \delta^{\bar{\epsilon}}|B_{j}| + \sum_{j \in \mathcal{J}} |B_{j}| \leq \tfrac{1}{4} \cdot \delta^{\epsilon}|B| + \sum_{j \in \mathcal{J}} |B_{j}|. \end{align*}
Rearranging, $\sum_{j \in \mathcal{J}} |B_{j}| \geq \delta^{\epsilon}|B|/4$. We have now established the inclusion \eqref{form19}.

Finally, it follows from \eqref{form19} and \cite[Lemma 20]{MR4148151} that
\begin{equation}\label{form25} \nu(\mathcal{E}(A \mid B,\epsilon)) \leq \nu \left( \bigcup_{\mathcal{J}} \bigcap_{j \in \mathcal{J}} \mathcal{E}(A \mid B_{j},\bar{\epsilon}) \right) \leq 4\delta^{\bar{\epsilon} - \epsilon} < 1, \end{equation}
assuming once more that $\delta > 0$ is small enough in the final inequality. The proof of \cite[Lemma 20]{MR4148151} is, again, so short that we provide the details for the reader's convenience. If $c \in \bigcup_{\mathcal{J}} \bigcap_{j \in \mathcal{J}} \mathcal{E}(A \mid B_{j},\bar{\epsilon})$, then $\sum_{j = 1}^{N} (|B_{j}|/|B|) \cdot \mathbf{1}_{\mathcal{E}(A \mid B_{j},\bar{\epsilon})}(c) \geq \delta^{\epsilon}/4$. Consequently,
\begin{align*} \nu \left( \bigcup_{\mathcal{J}} \bigcap_{j \in \mathcal{J}} \mathcal{E}(A \mid B_{j},\bar{\epsilon}) \right) & \leq 4\delta^{-\epsilon} \sum_{j = 1}^{N} \frac{|B_{j}|}{|B|} \cdot \nu(\mathcal{E}(A \mid B_{j},\bar{\epsilon}))\\
& \leq 4\delta^{-\epsilon} \max_{1 \leq j \leq N} \nu(\mathcal{E}(A \mid B_{j},\bar{\epsilon})) \stackrel{\eqref{form20}}{\leq} 4\delta^{-\epsilon + \bar{\epsilon}}. \end{align*}
This concludes the proof of Theorem \ref{mainSubset1}. \end{proof}

\begin{remark}\label{rem4} The proof above demonstrated that in order to deduce Theorem \ref{mainSubset1} for a fixed constant "$\kappa$" from Theorem \ref{mainSubset2}, one only needs to apply Theorem \ref{mainSubset2} for any value $\bar{\kappa} < \kappa$ arbitrarily close to $\kappa$ (but the value of the constant $\epsilon \to 0$ as $\bar{\kappa} \nearrow \kappa$).

\end{remark}

\subsection{Proof of the weaker toy theorem}\label{s:subsetReductionB} In this section, we prove Theorem \ref{mainSubset2} by reducing it to Theorem \ref{mainTechnical}, which we have already established. We will need a few auxiliary results. One is the Pl\"unnecke-Ruzsa inequality for different summands, Lemma \ref{PRIneq} (only with $\epsilon = \tfrac{1}{2}$). Another auxiliary result will concern the existence of \emph{tight} subsets:

\begin{definition} For $\tau,T > 0$ and $N \in \N$, a set $A \subset \delta \cdot \Z$ is called $(\tau,T,N)$-tight if
\begin{displaymath} \max_{1 \leq k \leq N} \frac{|kA|}{|kA'|} \leq T \qquad \text{for all } A' \subset A \text{ with } |A'| \geq \delta^{\tau}|A|. \end{displaymath}
\end{definition}

It will be useful to observe that if $A$ is $(\tau,T,N)$-tight, and $0 < \tau' \leq \tau$, then $A$ is also $(\tau',T,N)$-tight, simply because there are fewer sets $A' \subset A$ to consider.

\begin{lemma}\label{lemma7} Let $\tau > 0$, $N \in \N$, and let $A \subset (\delta \cdot \Z) \cap [0,1]$ be a set with $|A| \geq \delta^{-N^{2}\tau}$. Then, there exists a $(\tau,2\delta^{-1/N},N)$-tight subset $A' \subset A$ of cardinality $|A'| \geq \delta^{N^{2}\tau}|A|$. \end{lemma}

\begin{proof} We find a sequence $A =: A_{0} \supset A_{1} \supset \ldots \supset A_{N^{2}}$ as follows. Assuming that $A_{j - 1}$ has already been selected, and $1 \leq j \leq N^{2}$, we let $A_{j} \subset A_{j - 1}$ be a subset with $|A_{j}| \geq \delta^{\tau}|A_{j - 1}|$ such that the quantity
\begin{displaymath} \max_{1 \leq k \leq N} |kA_{j - 1}|/|kA'| \end{displaymath}
is maximised among all subsets $A' \subset A_{j - 1}$ with $|A'| \geq \delta^{\tau}|A_{j - 1}|$. Thus, we see that if $\max_{1 \leq k \leq N} |kA_{j - 1}|/|kA_{j}| \leq T$, then $A_{j - 1}$ is $(\tau,T,N)$-tight.

Observe that 
\begin{equation}\label{a3}|A_{N^{2}}| \geq \delta^{N^{2}\tau}|A| \geq 1 \quad \text{and} \quad |NA| \leq N\delta^{-1}. \end{equation}
Writing $T := 2\delta^{-1/N}$, we now claim that there exists an index $j \in \{1,\ldots,N^{2}\}$ with
\begin{equation}\label{a2} \max_{1 \leq k \leq N} |kA_{j - 1}|/|kA_{j}| \leq T. \end{equation}
Indeed, if this fails, then by the pigeonhole principle there exists a fixed choice $k \in \{1,\ldots,N\}$, and $n$ indices $j_{1},\ldots,j_{N} \in \{1,\ldots,N^{2}\}$ such that the converse inequality
\begin{displaymath} |kA_{j_{i}}| < T^{-1}|kA_{j_{i} - 1}|, \qquad 1 \leq i \leq N, \end{displaymath}
holds. Since $A_{j} \subset A_{j - 1}$, the inequality $|kA_{j}| \leq |kA_{j - 1}|$ holds for every index $j \in \{1,\ldots,N^{2}\}$, and $|kA_{j_{i}}| < T^{-1}|kA_{j_{i} - 1}|$ for the $n$ special indices $j_{i} \in \{1,\ldots,N^{2}\}$. This forces
\begin{displaymath} 1 \stackrel{\eqref{a3}}{\leq} |kA_{N^{2}}| \leq |kA_{j_{N}}| < T^{-N}|kA| \stackrel{\eqref{a3}}{\leq} (2^{-N}\delta) \cdot (N\delta^{-1}) \leq 1, \end{displaymath}
a contradiction. Now $A' := A_{j - 1} \subset A$, as in \eqref{a2}, is $(\tau,T,N)$-tight, and $|A'| \geq |A_{N^{2}}| \geq \delta^{N^{2}\tau}|A|$. This completes the proof of the lemma. \end{proof}

Finally, we will need the following lemma, which is a $\delta$-discretised version of \cite[Lemma 3.1]{OV18}, or alternatively a version of Bourgain's computations \cite[(7.18)-(7.19)]{Bourgain10} for two different sets (the presence of two different sets adds no difficulties):

\begin{lemma}\label{OVLemma} Let $C_{1},C_{2},C_{3} > 0$, and assume that $A,B \subset \delta \cdot \Z$ are sets with $|A + A| \leq C_{1}|A|$ and $|B + B| \leq C_{2}|B|$. Let moreover $c \in \R$, and let $G \subset A \times B$ be an arbitrary subset with $|G| \geq |A||B|/C_{3}$. Then $|A + cB|_{\delta} \lesssim C_{1}C_{2}C_{3}|\pi_{c}(G)|_{\delta}$.
\end{lemma} 
\begin{proof} Note that
\begin{displaymath} |A + cB|_{\delta} \lesssim \sum_{t \in \delta \cdot \Z} \mathbf{1}_{(A + cB)(\delta)}(t). \end{displaymath}
Fix $t \in (\delta \cdot \Z) \cap (A + cB)(\delta)$ and find $(a,b) \in A \times B$ such that $\dist(t,\pi_{c}(a,b)) \leq \delta$. Then
\begin{displaymath} \dist(t,\pi_{c}(-G + (x,y))) \leq \dist(t,\pi_{c}(a,b)) \leq \delta, \qquad (x,y) \in G + (a,b). \end{displaymath}
Moreover, any candidates $(x,y) \in G + (a,b)$ satisfy
\begin{displaymath} (x,y) \in G + (a,b) \subset (A \times B) + (A \times B) = (A + A) \times (B + B), \end{displaymath}
so there are $\geq |G + (a,b)| = |G|$ points $(x,y) \in (A + A) \times (B + B)$ with the property $\dist(t,\pi_{c}(-G + (x,y))) \leq \delta$. It follows that
\begin{displaymath} \mathbf{1}_{(A + cB)(\delta)}(t) = 1 \leq \frac{1}{|G|} \sum_{(x,y) \in (A + A) \times (B + B)} \mathbf{1}_{\pi_{c}(-G + (x,y))(\delta)}(t). \end{displaymath}
Since $|\pi_{c}(G)|_{\delta} \sim |\pi_{c}(-G + (x,y))|_{\delta}$ for every $(x,y) \in (\delta \cdot \Z)^{2}$, we have
\begin{align*} |A + cB|_{\delta} & \lesssim \frac{1}{|G|} \sum_{(x,y) \in (A + A) \times (B + B)} \sum_{t \in \delta \cdot \Z} \mathbf{1}_{\pi_{c}(-G + (x,y))(\delta)}(t)\\
& \lesssim \frac{|A + A||B + B||\pi_{c}(G)|_{\delta}}{|G|} \leq C_{1}C_{2}C_{3}|\pi_{c}(G)|_{\delta}, \end{align*}
as claimed. \end{proof}

We are now ready to carry out the main task in this section, namely reducing the proof of Theorem \ref{mainSubset2} to Theorem \ref{mainTechnical}, which we repeat here for the reader's convenience:

\begin{thm}\label{mainTechnicalApp} Let $0 < \beta \leq \alpha < 1$ and $\kappa > 0$. Then, for every $\gamma \in ((\alpha - \beta)/(1 - \beta),1]$, there exist $\epsilon_{0},\epsilon,\delta_{0} \in (0,\tfrac{1}{2}]$, depending only on $\alpha,\beta,\gamma,\kappa$, such that the following holds. Let $\delta \in 2^{-\N}$ with $\delta \in (0,\delta_{0}]$, and let $A,B \subset (\delta \cdot \Z) \cap [0,1]$ satisfy the following hypotheses:
\begin{enumerate}
\item[(A)] \label{A} $|A| \leq \delta^{-\alpha}$.
\item[(B)] \label{B} $|B| \geq \delta^{-\beta}$, and $B$ satisfies the following Frostman condition: 
\begin{displaymath} |B \cap B(x,r)| \leq r^{\kappa}|B|, \qquad \delta \leq r \leq \delta^{\epsilon_{0}}. \end{displaymath} 
\end{enumerate}
Further, let $\nu$ be a Borel probability measure with $\spt (\nu) \subset [0,1]$, and satisfying the Frostman condition $\nu(B(x,r)) \leq r^{\gamma}$ for $x \in \R$ and $\delta \leq r \leq \delta^{\epsilon_{0}}$. Then, there exists a point $c \in \spt (\nu)$ such that
\begin{displaymath} |A + cB|_{\delta} \geq \delta^{-\epsilon}|A|. \end{displaymath}
\end{thm}

\begin{proof}[Proof of Theorem \ref{mainSubset2} assuming Theorem \ref{mainTechnicalApp}] Fix the parameters $0 < \beta \leq \alpha < 1$, $\kappa > 0$, and $\gamma \in ((\alpha - \beta)/(1 - \beta),1]$, as in Theorem \ref{mainSubset2}. Start by applying Theorem \ref{mainTechnicalApp} with the following slightly modified parameters:
\begin{displaymath} 0 < \bar{\beta} \leq \bar{\alpha} < 1, \, \bar{\kappa} \in (0,\kappa), \quad \text{and} \quad \bar{\gamma} > (\bar{\alpha} - \bar{\beta})/(1 - \bar{\beta}), \end{displaymath}
where $\bar{\alpha} > \alpha$ and $\bar{\beta} < \beta$ and $\bar{\gamma} < \gamma$ are arbitrary choices such that the final inequality is valid. The choice of $\bar{\kappa} \in (0,\kappa)$ is arbitrary. As usual, the parameters $\bar{\alpha},\bar{\beta},\bar{\gamma}$ should be viewed as functions of $\alpha,\beta,\gamma$, but we leave finding explicit expressions to the reader. Any future dependence on $\bar{\alpha},\bar{\beta},\bar{\gamma}$ will, in fact, be a dependence on $\alpha,\beta,\gamma$. Then, let 
\begin{displaymath} \bar{\epsilon}_{0},\bar{\epsilon},\bar{\delta}_{0} > 0 \end{displaymath}
be the constants given by Theorem \ref{mainTechnicalApp}, which only depend on $\bar{\alpha},\bar{\beta},\bar{\kappa},\bar{\gamma}$, and such that the conclusion of Theorem \ref{mainTechnicalApp} is valid. Our task is to find constants $\epsilon,\epsilon_{0},\delta_{0} > 0$, which may depend on all of the constants $\alpha,\bar{\alpha},\beta,\bar{\beta},\gamma,\bar{\gamma},\kappa,\bar{\kappa},\bar{\epsilon}_{0},\bar{\epsilon},\bar{\delta}_{0}$, such that Theorem \ref{mainSubset2} is valid with constants $\alpha,\beta,\gamma,\kappa$. The choice of $\epsilon_{0}$ is particularly simple:
\begin{equation}\label{aa22} \epsilon_{0} = \bar{\epsilon}_{0}. \end{equation}
For $\delta_{0}$, we will need that $\delta_{0} \leq \bar{\delta}_{0}$, and there will be an additional dependence on $\bar{\alpha},\bar{\epsilon}$, which will be clarified during the proofs of \eqref{aa5} and \eqref{a19}. To define the constant $\epsilon \in (0,\tfrac{1}{2}]$, we first introduce an auxiliary natural number $N \in \N$ satisfying
\begin{equation}\label{aa23} \frac{3}{N} + \frac{1}{\log_{2} N} \leq \bar{\epsilon}/2. \end{equation}
Then, we choose $\epsilon > 0$ so small that
\begin{equation}\label{a14} N^{4N + 1}\epsilon \leq \min\left\{\bar{\epsilon}_{0}(\kappa - \bar{\kappa}),\beta - \bar{\beta},\bar{\alpha} - \alpha\right\} \quad \text{and} \quad \epsilon \leq \bar{\epsilon}_{0}(\gamma - \bar{\gamma}). \end{equation}
We now claim that Theorem \ref{mainSubset2} holds with the constants $\epsilon,\epsilon_{0},\delta_{0}$ (given the parameters $\alpha,\beta,\gamma,\kappa$). Let $\delta \in 2^{-\N}$ with $\delta \leq \delta_{0}$, and let $A,B,\nu$ be objects satisfying the hypotheses of Theorem \ref{mainSubset2} with constants $\alpha,\beta,\gamma,\kappa,\epsilon_{0}$. Thus $A,B \subset [0,1] \cap (\delta \cdot \Z)$, $|A| \leq \delta^{-\alpha}$, and $\nu(B(x,r)) \leq r^{\gamma}$ for $x \in \R$ and $\delta \leq r \leq \delta^{\epsilon_{0}} = \delta^{\bar{\epsilon}_{0}}$. Further, $|B| \geq \delta^{-\beta}$, and
\begin{equation}\label{form17} |B \cap B(x,r)| \leq r^{\kappa}|B|, \qquad x \in \R, \, \delta \leq r \leq \delta^{\epsilon_{0}} = \delta^{\bar{\epsilon}_{0}}. \end{equation}
The claim is that there exists a subset $B' \subset B$ such that $\nu(\mathcal{E}(A \mid B',\epsilon)) \leq \delta^{\epsilon}$. We proceed by making a counter assumption:

\begin{counter} $\nu(\mathcal{E}(A \mid B',\epsilon)) > \delta^{\epsilon}$ for all $B' \subset B$. \end{counter}

We will use our \textbf{Counter assumption} and Lemmas \ref{lemma7} and \ref{OVLemma} to construct a sequence $\{H_{n}\}_{n = 1}^{N} \subset \delta \cdot \Z$ with $|H_{n}| \leq \delta^{-\bar{\alpha}}$. The point will be, omitting all technical details, that once this sequence has been constructed, we will find an index $n \in \{0,\ldots,N - 1\}$ with the property that $|H_{n} + cB| < \delta^{-\bar{\epsilon}}|H_{n}|$ for all $c \in \spt(\nu)$. This (or the more technical version of it) will violate Theorem \ref{mainTechnicalApp}, and show that the \textbf{Counter assumption} is false.

Let $\{\tau_{n}\}_{n = 0}^{N}$ be the finite decreasing sequence
\begin{equation}\label{defTau} \tau_{n} := N^{4N - 3n}\epsilon, \qquad 0 \leq n \leq N. \end{equation}
While we construct the sets $H_{n}$, we will simultaneously find elements $c_{1},c_{2},\ldots,c_{N} \in C = \spt(\nu)$, subsets $C_{n} \subset C$ of measure $\nu(C_{n}) \geq \delta^{\epsilon}$, and a decreasing sequence $B =: B_{0} \supset B_{1} \supset \ldots \supset B_{N}$ with the following three properties:
\begin{enumerate}
\item $|B_{n + 1}| \geq \delta^{\tau_{n}}|B_{n}|$ for $0 \leq n \leq N - 1$,
\item $|A + c_{n}B_{n}| < \delta^{-\epsilon}|A|$ for $1 \leq n \leq N$,
\item $B_{n}$ is $(\tau_{n},2\delta^{1/N},N)$-tight for $1 \leq n \leq N$.
\end{enumerate}
In particular, it follows from property (1) that
\begin{equation}\label{a13} |B_{n}| \geq \delta^{N\tau_{0}}|B| \geq \delta^{N^{4N + 1}\epsilon}|B|, \qquad 0 \leq n \leq N. \end{equation} 
To initialise the definition of the sets $B_{n},C_{n},H_{n}$, and the elements $c_{n} \in C$, set $B_{0} := B$ and $H_{0} := \emptyset$. (the properties (2)-(3) do not concern the case $n = 0$). Assume that the sets $B_{n}$ have already been constructed for some $0 \leq n \leq N - 1$, and recall the notation $(H)_{\delta} := (\delta \cdot \Z) \cap H(\delta)$ for arbitrary $H \subset \R$. By the \textbf{Counter assumption} applied to the set $B' := B_{n} \subset B$, there now corresponds a subset 
\begin{equation}\label{form21} C_{n + 1} := \mathcal{E}(A \mid B_{n},\epsilon) \cap C \subset C \end{equation}
of measure $\nu(C_{n + 1}) \geq \delta^{\epsilon}$ with the property that for all $c \in C_{n + 1}$, there is a further subset $\bar{B}_{c} \subset B_{n}$ of cardinality $|\bar{B}_{c}| \geq \delta^{\epsilon}|B_{n}|$ such that $|A + c\bar{B}_{c}| < \delta^{-\epsilon}|A|$. We will define $H_{n + 1}$ as either $H_{n + 1} := H_{n} + H_{n} \subset \delta \cdot \Z$, or 
\begin{equation}\label{a11} H_{n + 1} := H_{n} + (c \cdot NB_{c})_{\delta} \subset \delta \cdot \Z, \end{equation}
where $c \in C_{n + 1}$, and $B_{c} \subset \bar{B}_{c} \subset B_{n}$ is a certain set satisfying the constraints (1)-(3). It turns out that subsets of this kind exist for all $c \in C_{n + 1}$: this will be proved shortly, but should be taken for granted for now. For every $c \in C_{n + 1}$, we then pick the subset $B_{c} \subset \bar{B}_{c} \subset B_{n}$ which satisfies (1)-(3), and maximises the number $|H_{n} + c \cdot NB_{c}|_{\delta}$, among all possible $c \in C_{n + 1}$, and subsets $B_{c} \subset \bar{B}_{c}$ satisfying (1)-(3). Once the optimal $c \in C_{n + 1}$ and $B_{c} \subset \bar{B}_{c} \subset B_{n}$ have been located, we finally check if 
\begin{displaymath} |H_{n} + c \cdot NB_{c}|_{\delta} \geq |H_{n} + H_{n}|. \end{displaymath}
If this happens, then $H_{n + 1}$ is defined as in \eqref{a11}. Otherwise $H_{n + 1} := H_{n} + H_{n}$. Note that in both cases
\begin{equation}\label{a12} |H_{n} + H_{n}| \leq |H_{n + 1}|. \end{equation}
If $H_{n}$ was defined by \eqref{a11}, for some 
\begin{displaymath} c_{n + 1} := c \in C_{n + 1}, \end{displaymath}
then we set $B_{n + 1} := B_{c} \subset B_{n}$, where $B_{c}$ is the maximising set found above. If $H_{n + 1} = H_{n} + H_{n}$, we simply define $B_{n + 1} := B_{n}$, and $c_{n + 1} := c_{n}$. Note that in all cases the properties (1)-(3) are satisfied, and $B_{n + 1}$ is $(\tau_{n + 1},2\delta^{1/N},N)$-tight. This is even true if $B_{n + 1}$ was defined via the "second scenario" as $B_{n + 1} = B_{n}$: indeed, since $H_{0} = \emptyset$, this is only possible if $n \geq 1$, and then we already know that $B_{n}$ is $(\tau_{n},2\delta^{1/N},N)$-tight. Then $B_{n + 1} = B_{n}$ is also $(\tau_{n + 1},2\delta^{1/N},N)$-tight simply because $\tau_{n + 1} \leq \tau_{n}$.

This completes the inductive definition of the sets $B_{n},C_{n},H_{n}$, and elements $c_{n} \in C$, for $1 \leq n \leq N$. Note that $H_{n} \subset (\delta \cdot \Z) \cap [0,N^{n}]$ by a straightforward induction, so $|H_{n}| \leq 2N^{n} \delta^{-1}$. Therefore, by the pigeonhole principle, there exists an index $n \in \{0,\ldots,N - 1\}$ such that
\begin{equation}\label{a4} |H_{n} + H_{n}| \stackrel{\eqref{a12}}{\leq} |H_{n + 1}| \leq (2N^{n}\delta^{-1})^{1/N}|H_{n}| \leq 2N\delta^{-1/N}|H_{n}|. \end{equation}
Since $H_{0} = \emptyset \neq H_{1}$, the middle inequality cannot be satisfied with $n = 0$, and we see that actually $n \in \{1,\ldots,N - 1\}$. We now claim that, for this particular index $n$, fixed for the remainder of the argument, it holds that
\begin{equation}\label{aa5} |H_{n} + cB_{n}|_{\delta} < \delta^{-\bar{\epsilon}}|H_{n}|, \qquad c \in C_{n + 1}, \end{equation}
assuming that the upper bound $\delta_{0} > 0$ for $\delta$ is sufficiently small, depending only on $N$ (hence "$\bar{\epsilon}$" by our choice \eqref{aa23}). To see this, we first record that $2^{k}B_{n} \subset 2^{N}B \subset (\delta \cdot \Z) \cap [0,2^{N}]$ for all $1 \leq k \leq N$, so by another application of the pigeonhole principle, there exists an index $0 \leq k \leq \log_{2} (N - 1)$ such that
\begin{equation}\label{a8} |2^{k + 1}B_{n}| \leq (2^{N + 1}\delta^{-1})^{1/\log_{2} N}|2^{k}B_{n}|. \end{equation} 
We also fix this index $k \in \{0,\ldots,\log_{2}(N - 1)\}$ for the remainder of the argument.

Now, to prove \eqref{aa5}, fix $c \in C_{n + 1} \subset [0,1]$, and recall the subset $\bar{B}_{c} \subset B_{n}$ defined right below \eqref{form21}, satisfying $|\bar{B}_{c}| \geq \delta^{\epsilon}|B_{n}|$ and $|A + c\bar{B}_{c}| < \delta^{-\epsilon}|A|$. We use Lemma \ref{lemma7} to find a $(\tau_{n + 1},2\delta^{-1/N},N)$-tight subset $B_{c} \subset \bar{B}_{c} \subset B_{n}$ of cardinality 
\begin{align} |B_{c}| \geq \delta^{N^{2}\tau_{n + 1}}|\bar{B}_{c}| & \stackrel{\eqref{defTau}}{\geq} \delta^{N^{2}N^{4N - 3(n + 1)}\epsilon + \epsilon}|B_{n}| \notag\\
& \,\,\, \geq \delta^{(N^{4N - 3n - 1} + 1)\epsilon}|B_{n}|\notag\\
&\label{a7} \,\,\, \geq \delta^{N^{4N - 3n}\epsilon}|B_{n}| = \delta^{\tau_{n}}|B_{n}|.\end{align}
We used the elementary inequality $N^{4N - 3n - 1} + 1 \leq N^{4N - 3n}$, for $N \geq 2$ and $0 \leq n \leq N$.

A combination of \eqref{a7}, the tightness of $B_{c}$, and the inequality $|A + cB_{c}|_{\delta} < \delta^{-\epsilon}|A|$, shows that $B_{c} \subset \bar{B}_{c} \subset B_{n}$ satisfies all the requirements (1)-(3), and is therefore a competitor in the definition of $H_{n + 1}$. In particular, now we have shown that such competitors exist for all $c \in C_{n + 1}$. Moreover, since $2^{k} \leq N$ (as in \eqref{a8}), it follows that
\begin{equation}\label{a15} |H_{n} + c \cdot 2^{k}B_{c}|_{\delta} \lesssim |H_{n} + c \cdot NB_{c}|_{\delta} \lesssim |H_{n + 1}| \stackrel{\eqref{a4}}{\leq} N\delta^{-1/N} |H_{n}|. \end{equation}
With this bound in mind, we next plan to show that $|H_{n} + cB_{n}|_{\delta}$ (the left hand side of \eqref{aa5}) is controlled by $|H_{n} + c \cdot 2^{k}B_{c}|_{\delta}$. To achieve this, we apply Lemma \ref{OVLemma} to the sets $H_{n},2^{k}B_{n} \subset \delta \cdot \Z$, and the subset $G = H_{n} \times 2^{k}B_{c} \subset H_{n} \times 2^{k}B_{n}$ which satisfies $|G| = |H_{n}||2^{k}B_{n}|/C_{3}$ with constant $C_{3} = |2^{k}B_{n}|/|2^{k}B_{c}|$: 
\begin{align} |H_{n} + c B_{n}|_{\delta} & \lesssim |H_{n} + c \cdot 2^{k}B_{n}|_{\delta} \notag \\
&\label{a9} \lesssim \frac{|H_{n} + H_{n}|}{|H_{n}|} \cdot \frac{|2^{k + 1}B_{n}|}{|2^{k}B_{n}|} \cdot \frac{|2^{k}B_{n}|}{|2^{k}B_{c}|} \cdot |H_{n} + c \cdot 2^{k}B_{c}|_{\delta}.  \end{align} 
Apart from \eqref{a15}, the individual factors are bounded from above as follows:
\begin{itemize}
\item $|H_{n} + H_{n}|/|H_{n}| \leq 2N\delta^{-1/N}$ by \eqref{a4},
\item $|2^{k + 1}B_{n}|/|2^{k}B_{n}| \leq (2^{N + 1}\delta^{-1})^{1/\log_{2} N}$ by \eqref{a8},
\item $|2^{k}B_{n}|/|2^{k}B_{c}| \leq 2\delta^{-1/N}$ by the $(\tau_{n},2\delta^{-1/N},N)$-tightness of $B_{n}$, and by \eqref{a7}.
\end{itemize}
Plugging these estimates into \eqref{a9} yields
\begin{equation}\label{form23} |H_{n} + cB_{n}|_{\delta} \lesssim_{N} \delta^{-3/N - 1/\log_{2} N }|H_{n}| \stackrel{\eqref{aa23}}{\leq} \delta^{-\bar{\epsilon}/2}|H_{n}|, \qquad c \in C_{n + 1}. \end{equation}
This completes the proof of \eqref{aa5}, if $\delta > 0$ is small enough depending on $\bar{\epsilon},N$, both of which only depend on $\alpha,\beta,\gamma,\kappa$.

We next plan to use \eqref{aa5} to contradict Theorem \ref{mainTechnicalApp} with parameters $\bar{\alpha},\bar{\beta},\bar{\gamma},\bar{\kappa}$, and the objects $H_{n},B_{n}$, and $\bar{\nu} = \nu(C_{n + 1})^{-1} \cdot \nu|_{C_{n + 1}}$. The first task it to use the Pl\"unnecke-Ruzsa inequality, Lemma \ref{PRIneq}, to show that
\begin{equation}\label{a19} |H_{n}| \leq \delta^{-\bar{\alpha}}, \end{equation}
assuming that $\delta > 0$ is sufficiently small in terms of $N,\bar{\alpha}$. Indeed, note that $H_{n}$ can be written as a sum of $\leq N^{n} \leq N^{N}$ sets of the form $(c_{m}B_{m})_{\delta}$, for some $1 \leq m \leq n$. Each of these sets individually satisfies $|A + c_{m}B_{m}|_{\delta} < \delta^{-\epsilon}|A|$. We may therefore infer that
\begin{displaymath} |H_{n}| \lesssim_{N} \delta^{-2N^{N}\epsilon}|A| \leq \delta^{-2N^{N}\epsilon - \alpha}. \end{displaymath}
from Lemma \ref{PRIneq}. This inequality implies \eqref{a19} for small enough $\delta > 0$, recalling our choice of constants at \eqref{a14}.

Recall from \eqref{form17} that the set $B$ satisfies a Frostman condition with exponent $\kappa$:
\begin{displaymath} |B \cap B(x,r)| \leq r^{\kappa}|B|, \qquad x \in \R, \, \delta \leq r \leq \delta^{\epsilon_{0}}. \end{displaymath}
Since $B_{n} \subset B$, and $|B_{n}| \geq \delta^{N^{4N + 1}\epsilon}|B|$ by \eqref{a13} we deduce that $B_{n}$ satisfies a Frostman condition with parameters $\bar{\epsilon}_{0} = \epsilon_{0}$ (recall \eqref{aa22}) and $\bar{\kappa}$:
\begin{equation}\label{a24} |B_{n} \cap B(x,r)| \leq \delta^{-N^{4N + 1}\epsilon}r^{\kappa}|B_{n}| \stackrel{\eqref{a14}}{\leq} r^{\bar{\kappa}}|B_{n}|, \qquad x \in \R, \, \delta \leq r \leq \delta^{\bar{\epsilon}_{0}}. \end{equation}
Moreover, since $|B| \geq \delta^{-\beta}$ by assumption (see above \eqref{form17}), we have
\begin{equation}\label{a25} |B_{n}| \geq \delta^{N^{4N + 1}\epsilon}|B| \stackrel{\eqref{a14}}{\geq} \delta^{-\bar{\beta}}. \end{equation}
Finally, we verify that the probability measure $\bar{\nu} = \nu(C_{n + 1})^{-1} \cdot \nu|_{C_{n + 1}}$ satisfies a Frostman condition with exponent $\bar{\gamma}$. Since $\nu$ itself satisfies the Frostman condition $\nu(B(x,r)) \leq r^{\gamma}$ for all $\delta \leq r \leq \delta^{\epsilon_{0}} = \delta^{\bar{\epsilon}_{0}}$, and $\nu(C_{n + 1}) \geq \delta^{\epsilon}$, we see that
\begin{displaymath} \bar{\nu}(B(x,r)) \leq \delta^{-\epsilon}\nu(B(x,r)) \leq \delta^{-\epsilon}r^{\gamma} \stackrel{\eqref{a14}}{\leq} r^{\bar{\gamma}}, \qquad x \in \R, \, \delta \leq r \leq \delta^{\bar{\epsilon}_{0}}. \end{displaymath} 
We have now reached a situation which violates Theorem \ref{mainTechnicalApp} for the choice of parameters $\bar{\alpha},\bar{\beta},\kappa/2,\bar{\gamma}$. The objects $H_{n},B_{n},\bar{\nu}$ satisfy all the hypotheses by \eqref{a19}-\eqref{a25}, but nevertheless $|H_{n} + cB_{n}|_{\delta} < \delta^{-\bar{\epsilon}}|H_{n}|$ for all $c \in C_{n + 1}$ according to \eqref{aa5}, where $C_{n + 1}$ is a set of full $\bar{\nu}$ measure. Therefore the \textbf{Counter assumption} is false, and the proof of Theorem \ref{mainSubset2} is complete.

To be precise, we have cut one corner: $H_{n}$ may not be a subset of $[0,1]$: we only know that $H_{n} \subset [0,N^{n}] \subset [0,N^{N}]$. However, one can easily fix this by picking the most $H_{n}$-populous unit interval $[r,r + 1] \subset [0,N^{n}]$, which contains $\geq N^{-N}|H_{n}| \geq \delta^{\bar{\epsilon}/4}|H_{n}|$ points of $H_{n}$ if $\delta > 0$ is small enough, and replacing $H_{n}$ by $\bar{H}_{n} := H_{n} \cap [r,r + 1] - \{r\} \subset [0,1]$. After replacing $H_{n}$ with $\bar{H}_{n}$, the estimate \eqref{form23} remains valid with constant $\delta^{-3\bar{\epsilon}/4}$ instead of $\delta^{-\bar{\epsilon}/2}$. This is still good enough to imply \eqref{aa5}. \end{proof}

\begin{remark}\label{rem5} In order to deduce Theorem \ref{mainSubset2} for a fixed exponent "$\kappa$" from Theorem \ref{mainTechnicalApp}, the argument above only needed to apply Theorem \ref{mainTechnicalApp} with exponent $\bar{\kappa} < \kappa$ arbitrarily close to $\kappa$ (but $\epsilon \to 0$ in Theorem \ref{mainSubset2} as $\bar{\kappa} \nearrow \kappa$).

\end{remark}

\subsection{Proof of the main theorem}\label{s:mainProof}

In this section, we finally prove Theorem \ref{main} by reducing it to its toy version, Theorem \ref{mainSubset1}. We will need the asymmetric Balog-Szemer\'edi-Gowers theorem, see the book of Tao and Vu, \cite[Theorem 2.35]{MR2289012}. We state the result in the following slightly weaker form (following Shmerkin's paper \cite[Theorem 3.2]{Sh}):
\begin{thm}[Asymmetric Balog-Szemer\'edi-Gowers theorem]\label{BSG} Given $\zeta > 0$, there exists $\xi > 0$ such that the following holds for $\delta \in 2^{-\N}$ small enough. Let $A,B \subset (\delta \cdot \Z) \cap [0,1]$ be finite sets, and assume that there exist $c \in [\tfrac{1}{2},1]$ and $G \subset A \times B$ satisfying 
\begin{equation}\label{bsg1} |G| \geq \delta^{\xi}|A||B| \quad \text{and} \quad |\{x + cy : (x,y) \in G\}|_{\delta} = |\pi_{c}(G)|_{\delta} \leq \delta^{-\xi}|A|. \end{equation}
Then there exist subsets $A' \subset A$ and $B' \subset B$ with the properties
\begin{equation}\label{bsg2} |A'||B'| \geq \delta^{\zeta}|A||B| \quad \text{and} \quad |A' + cB'|_{\delta} \leq \delta^{-\zeta}|A|. \end{equation}
\end{thm}

\begin{remark} In the references for Theorem \ref{BSG} cited above, the assumption $|\pi_{c}(G)|_{\delta} \leq \delta^{-\xi}|A|$ in \eqref{bsg1} is replaced by $|\pi_{1}(G)| \leq \delta^{-\xi}|A|$, and the conclusion \eqref{bsg2} is replaced by $|A' + B'| \leq \delta^{-\zeta}|A|$. For $c \in [\tfrac{1}{2},1]$, it is easy to see that the two variants of the theorem are formally equivalent. The details are left to the reader. The idea is to begin by applying the standard version of Theorem \ref{BSG} to the sets $B_{c} := (cB)_{\delta} \subset \delta \cdot \Z$ and $G_{c} := \{(x,(cy)_{\delta}) : (x,y) \in G\} \subset A \times B_{c}$, which satisfy $|B_{c}| \sim |B|$, $|G_{c}| \sim |G|$, and $|\pi_{1}(G_{c})| \lesssim \delta^{-\xi}|A|$. \end{remark}

\begin{proof}[Proof of Theorem \ref{main} assuming Theorem \ref{mainSubset1}] Let $\alpha,\beta,\gamma,\kappa$ be the constants for which we are supposed to prove Theorem \ref{main}. Thus $\gamma > (\alpha - \beta)/(1 - \beta)$. Our task is to find the constants $\epsilon,\epsilon_{0},\delta_{0} \in (0,\tfrac{1}{2}]$ such that the conclusion of Theorem \ref{main} holds. To this end, pick $\bar{\kappa} \in (0,\kappa)$ arbitrarily, and $\bar{\alpha} > \alpha$, $\bar{\beta} < \beta$, and $\bar{\gamma} < \gamma$ in such a way that the key inequality
\begin{displaymath} \bar{\gamma} > (\bar{\alpha} - \bar{\beta})/(1 - \bar{\beta}) \end{displaymath}
persists. This can be done explicitly in such a way that $\bar{\alpha},\bar{\beta},\bar{\gamma}$ are functions of $\alpha,\beta,\gamma$: therefore, any future dependence on $\bar{\alpha},\bar{\beta},\bar{\gamma}$ will, in fact, be a dependence on $\alpha,\beta,\gamma$.

Let $\bar{\epsilon},\bar{\epsilon}_{0},\bar{\delta}_{0} \in (0,\tfrac{1}{2}]$ be the constants given by Theorem \ref{mainSubset1} applied with parameters $\bar{\alpha},\bar{\beta},\bar{\gamma},\bar{\kappa}$. We now define $\epsilon,\epsilon_{0},\delta_{0}$ based on $\bar{\epsilon},\bar{\epsilon}_{0},\bar{\delta}_{0}$. First, we set $\epsilon_{0} := \bar{\epsilon}_{0}$. We also fix $\delta_{0} \in (0,\bar{\delta}_{0}]$. There will be a few additional requirements on $\delta_{0}$, depending on $\alpha,\beta,\gamma,\kappa$ only. These will be clarified when they arise. We then finally determine the constant $\epsilon$. First, we fix a natural number $N \sim 1/\bar{\epsilon}$, sufficiently large that the following holds:
\begin{equation}\label{b23} (N - 1)^{-1} < \bar{\epsilon}/2. \end{equation}
Then, we fix the auxiliary constant
\begin{equation}\label{bb9} \zeta := \min\left\{\frac{\bar{\epsilon}}{20 N},\frac{\bar{\epsilon}_{0}(\kappa - \bar{\kappa})}{2N}, \frac{\bar{\alpha} - \alpha}{2N(N + 1)},\frac{\beta - \bar{\beta}}{2N},\frac{\epsilon_{0}(\gamma - \bar{\gamma})}{2N} \right\}. \end{equation}
Now, let $\epsilon := \xi(\zeta) > 0$ be the constant given by Theorem \ref{BSG} applied with the constant $\zeta > 0$ from \eqref{bb9}. This means that if $c \in [\tfrac{1}{2},1]$, and $G \subset A \times B$ satisfies $|G| \geq \delta^{\epsilon}|A||B|$ and $|\pi_{c}(G)|_{\delta} \leq \delta^{-\epsilon}|A|$, then there exist $A' \subset A$ and $B' \subset B$ as in \eqref{bsg2}. 

Armed with these choices of parameters, we are prepared to prove Theorem \ref{main}. Fix $\delta \in 2^{-\N}$ with $\delta \leq \delta_{0}$, and let $A,B,\nu$ be a triple satisfying the hypotheses of Theorem \ref{main} with constants $\alpha,\beta,\gamma,\kappa$. To recap once more, $|A| \leq \delta^{\alpha}$, and $|B| \geq \delta^{-\beta}$, and 
\begin{equation}\label{form10aa} |B \cap B(x,r)| \leq r^{\kappa}|B|, \qquad x \in \R, \, \delta \leq r \leq \delta^{\epsilon_{0}} = \delta^{\bar{\epsilon}_{0}}. \end{equation}
Also, $\nu$ is a probability measure on $[\tfrac{1}{2},1]$ satisfying $\nu(B(x,r)) \leq r^{\gamma}$ for all $\delta \leq r \leq \delta^{\epsilon_{0}}$. We claim that there exists $c \in C := \spt(\nu)$ such that whenever $G \subset A \times B$ is a subset with $|G| \geq \delta^{\epsilon}|A||B|$, then $|\pi_{c}(G)|_{\delta} \geq \delta^{-\epsilon}|A|$. 

We make a counter assumption: the property above fails for every $c \in C$. Then, by the choice $\epsilon = \xi(\zeta)$, and Theorem \ref{BSG}, for every $c \in C$ there exist subsets $A_{c} \subset A$ and $B_{c} \subset B$, for every $c \in C$, with the properties 
\begin{equation}\label{bb5} |A_{c} \times B_{c}| \geq \delta^{\zeta}|A||B| \quad \text{and} \quad |A_{c} + cB_{c}|_{\delta} \leq \delta^{-\zeta}|A|. \end{equation}
We observe that
\begin{displaymath} \int \ldots \int |(A_{c_{1}} \times B_{c_{1}}) \cap \ldots \cap (A_{c_{N}} \times B_{c_{N}})| \, d\nu(c_{1})\cdots d\nu(c_{N}) \geq \delta^{N \zeta}|A||B| \end{displaymath}
by H\"older's inequality. Using $(A \times B) \cap (C \times D) = (A \cap C) \times (B \cap D)$, and Chebyshev's inequality, and $\nu(\R) = 1$, it follows that the set
\begin{equation}\label{def:omega} \Omega := \{(c_{1},\ldots,c_{N}) \in C^{N} : |(A_{c_{1}} \cap \ldots \cap A_{c_{N}}) \times (B_{c_{1}} \cap \ldots \cap B_{c_{N}})| \geq \tfrac{1}{2}\delta^{N\zeta}|A||B|\} \end{equation}
satisfies
\begin{equation}\label{form12a} \nu^{N}(\Omega) \geq \tfrac{1}{2} \cdot \delta^{N\zeta} \end{equation}
For $c_{1},\ldots,c_{n} \in C$ fixed, we define
\begin{displaymath} \Omega_{c_{1}\cdots c_{n}} := \{(c_{n + 1},\ldots,c_{N}) \in C^{N - n} : (c_{1},\ldots,c_{N}) \in \Omega\}. \end{displaymath}
It follows easily from Fubini's theorem that
\begin{equation}\label{bb6} \nu^{N - n}(\Omega_{c_{1}\cdots c_{n}}) = \int \nu^{N - n - 1}(\Omega_{c_{1}\cdots c_{n}c}) \, d\nu(c) \end{equation}
for all $c_{1},\ldots,c_{n} \in C$, and $1 \leq n \leq N - 2$. The same remains true for $n = 0$, if the left hand side is interpreted as $\nu^{N}(\Omega)$, and $c_{1}\cdots c_{n}c = c$. Equation \eqref{bb6} also remains valid for $n = N - 1$ if we define the notation $\nu^{N - n - 1} = \nu^{0}$ as follows:
\begin{equation}\label{bb7} \nu^{0}(\Omega_{c_{1}\cdots c_{N - 1}c}) := \mathbf{1}_{\Omega}(c_{1},\ldots,c_{N - 1},c). \end{equation}
We will use this notation in the sequel. 

For $(c_{1},\ldots,c_{N}) \in C^{N}$ fixed, we define decreasing sequences of sets $\{A_{c_{1}\cdots c_{n}}\}_{n = 1}^{N}$ and $\{B_{c_{1}\cdots c_{n}}\}_{n = 1}^{N}$ as follows:
\begin{displaymath} A_{c_{1}\cdots c_{n}} := A_{c_{1}} \cap \ldots \cap A_{c_{n}} \quad \text{and} \quad B_{c_{1}\cdots c_{n}} := B_{c_{1}} \cap \ldots \cap B_{c_{n}}, \quad 1 \leq n \leq N. \end{displaymath}
The definition formally makes sense for $(c_{1},\ldots,c_{N}) \in C^{N}$, but will only be useful for $(c_{1},\ldots,c_{N}) \in \Omega$. Namely, if $(c_{1},\ldots,c_{N}) \in \Omega$, then it follows from the definition \eqref{def:omega} that
\begin{equation}\label{b4} |A_{c_{1}\cdots c_{n}}| \geq |A_{c_{1}\cdots c_{N}}| \geq \tfrac{1}{2} \cdot \delta^{N\zeta}|A| \quad \text{and} \quad |B_{c_{1}\cdots c_{n}}| \geq \tfrac{1}{2} \cdot \delta^{N\zeta}|B|. \end{equation}
We now construct the sets $\{H_{n}\}_{n = 1}^{N} \subset \delta \cdot \Z$. At the same time, we will construct subsets $C_{1},\ldots,C_{N} \subset C$, and points $c_{n} \in C_{n}$, $1 \leq n \leq N$, with the properties  
\begin{equation}\label{bb3} \nu^{N - n}(\Omega_{c_{1}\cdots c_{n}}) \geq 2^{-n - 1}\delta^{N\zeta} \quad \text{and} \quad \nu(C_{n}) \geq 2^{-n - 1}\delta^{N\zeta}, \quad 1 \leq n \leq N. \end{equation}
In particular, the first part of \eqref{bb3} with $n = N$ shows that $(c_{1},\ldots,c_{N}) \in \Omega$, recall the notation \eqref{bb7}. To begin with, we define
\begin{displaymath} C_{1} := \{c \in C : \nu^{N - 1}(\Omega_{c}) \geq 2^{-2} \delta^{N\zeta}\}, \end{displaymath}
and we choose an arbitrary element $c_{1} \in C_{1}$. Since
\begin{displaymath} \int \nu^{N - 1}(\Omega_{c}) \, d\nu(c) = \nu^{N}(\Omega) \geq 2^{-1}\delta^{N\zeta} \end{displaymath}
by \eqref{form12a}, and the case $n = 0$ of \eqref{bb6}, we observe that $\nu(C_{1}) \geq 2^{-2}\delta^{N\zeta}$ by Chebyshev's inequality. In particular $C_{1} \neq \emptyset$. We then define
\begin{displaymath} H_{1} := (c_{1}B_{c_{1}})_{\delta}. \end{displaymath}

Assume inductively that $H_{1},\ldots,H_{n}$ and $C_{1},\ldots,C_{n} \subset C$, and $c_{j} \in C_{j}$, $1 \leq j \leq n \leq N - 1$, have already been constructed, and satisfy \eqref{bb3}. We then pick an element $c_{n + 1} \in C_{n + 1}$, where
\begin{displaymath} C_{n + 1} := \{c \in C : \nu^{N - n - 1}(\Omega_{c_{1}\cdots c_{n} c}) \geq 2^{-n - 2}\delta^{N\zeta}\}, \quad 1 \leq n \leq N - 1. \end{displaymath}
For $n = N - 1$, the notation $\nu^{N - n - 1}(\Omega_{c_{1}\cdots c_{n}c})$ should be interpreted as in \eqref{bb7}, so 
\begin{displaymath} C_{N} = \{c \in C : \mathbf{1}_{\Omega}(c_{1},\ldots,c_{N - 1},c) \geq 2^{-N - 1}\delta^{N\zeta}\} = \{c \in C : (c_{1},\ldots,c_{N - 1},c) \in \Omega\}. \end{displaymath}
For an arbitrary choice $c_{n + 1} \in C_{n + 1}$, we note that the first part of \eqref{bb3} is satisfied with index "$n + 1$", simply by the definition of $C_{n + 1}$. 

The set $C_{n + 1}$ also satisfies the second part of \eqref{bb3} with index "$n + 1$", by
\begin{displaymath} 2^{-n - 1}\delta^{N\zeta} \stackrel{\eqref{bb3}}{\leq} \nu^{N - n}(\Omega_{c_{1}\cdots c_{n}}) \stackrel{\eqref{bb6}}{=} \int \nu^{N - n - 1}(\Omega_{c_{1}\cdots c_{n}c}) \, d\nu(c), \end{displaymath}
and Chebyshev's inequality.

Whereas $c_{1} \in C_{1}$ was chosen arbitrarily, the element $c_{n + 1} \in C_{n + 1}$ is chosen in such a way that the quantity $|H_{n} + c_{n + 1}B_{c_{1}\cdots c_{n + 1}}|_{\delta}$ is maximised, among all possible choices $c_{n + 1} \in C_{n + 1}$. For this choice of $c_{n + 1} \in C_{n + 1}$, we define
\begin{displaymath} H_{n + 1} := H_{n} + (c_{n + 1}B_{c_{1}\cdots c_{n + 1}})_{\delta}. \end{displaymath}
Proceeding in this manner yields a sequence of sets $H_{1},\ldots,H_{N}$, and a distinguished sequence $(c_{1},\ldots,c_{N}) \in \Omega$, which we fix for the remainder of the argument. We record that if $(c_{1},\cdots,c_{n})$, $1 \leq n \leq N - 1$, is an initial sequence of $(c_{1},\cdots,c_{N})$, then
\begin{equation}\label{form13} |B_{c_{1}\cdots c_{n}c}| \geq \tfrac{1}{2}\delta^{N\zeta} |B_{c_{1}\cdots c_{n}}| \geq \delta^{\bar{\epsilon}}|B_{c_{1}\cdots c_{n}}|, \qquad c \in C_{n + 1}. \end{equation}
The second inequality simply follows from our choice of $\zeta$ at \eqref{bb9}. To see the first inequality, recall from the definition of $c \in C_{n + 1}$ that (in particular) $\Omega_{c_{1}\cdots c_{n}c} \neq \emptyset$ (in the case $n = N - 1$ simply $(c_{1},\ldots,c_{n},c) \in \Omega$). This means that there exists a sequence $(c_{n + 2}',\ldots,c_{N}') \in C^{N - n - 1}$ such that $(c_{1},\ldots c_{n},c,c_{n + 2}',\ldots,c_{N}') \in \Omega$. Consequently,
\begin{displaymath} |B_{c_{1}\cdots c_{n}c}| \geq |B_{c_{1}} \cap \cdots B_{c_{n}} \cap B_{c} \cap B_{c_{n + 2}'} \cap \cdots B_{c_{N}'}| \geq \tfrac{1}{2}\delta^{N\zeta}|B| \geq \tfrac{1}{2}\delta^{N\zeta}|B_{c_{1}\cdots c_{n}}| \end{displaymath}
by the definition of $\Omega$, see \eqref{def:omega}.

Note that $H_{n} \subset (\delta \cdot \Z) \cap [0,n]$ for all $1 \leq n \leq N$ by a straightforward induction, so $|H_{n}| \leq 2N \delta^{-1}$. Therefore, by the pigeonhole principle, there exists an $n \in \{1,\ldots,N - 1\}$ such that
\begin{equation}\label{form22} |H_{n + 1}| \leq (2N\delta^{-1})^{1/(N - 1)}|H_{n}| \leq 4\delta^{-1/(N - 1)}|H_{n}|. \end{equation}
We now consider the objects 
\begin{equation}\label{form9} \bar{A} := H_{n}, \quad \bar{B} := B_{c_{1}\cdots c_{n}}, \quad \text{and} \quad \bar{\nu} := \nu(C_{n + 1})^{-1}\nu|_{C_{n + 1}}. \end{equation}
We will show in a moment these objects satisfy the hypotheses of Theorem \ref{mainSubset1} with constants $\bar{\alpha},\bar{\beta},\bar{\kappa},\bar{\gamma}$, and $\bar{\epsilon}_{0}$. First, however, we conclude the proof of Theorem \ref{main}, taking this for granted. By Theorem \ref{mainSubset1}, there exists $\bar{c} \in C_{n + 1}$ (a set of full $\bar{\nu}$ measure) such that whenever $B' \subset \bar{B}$ is a set of cardinality $|B'| \geq \delta^{\bar{\epsilon}}|B|$, we have 
\begin{equation}\label{form15a} |H_{n} + \bar{c}B'|_{\delta} = |\bar{A} + \bar{c}B'|_{\delta} \geq \delta^{-\bar{\epsilon}}|\bar{A}| = \delta^{-\bar{\epsilon}}|H_{n}|. \end{equation}
(To be accurate, Theorem \ref{mainSubset1} only claims this for some $\bar{c} \in \spt(\bar{\nu})$, but the proof showed, see \eqref{form25}, that actually the set of non-admissible $c \in \spt(\bar{\nu})$ have measure strictly smaller than $1$, so we can pick $c \in C_{n + 1}$.) However, for every $c \in C_{n + 1}$, the set $B' := B_{c_{1}\cdots c_{n}c} \subset B_{c_{1}\cdots c_{n}} = \bar{B}$ satisfies
\begin{equation}\label{form16} |B'| \stackrel{\eqref{form13}}{\geq} \delta^{\bar{\epsilon}}|\bar{B}| \quad \text{and} \quad |H_{n} + cB'|_{\delta} \lesssim |H_{n + 1}| \stackrel{\eqref{form22}}{\leq} 4\delta^{-1/(N - 1)}|H_{n}| \stackrel{\eqref{b23}}{\leq} \delta^{-\bar{\epsilon}/2}|H_{n}|. \end{equation}
The inequality $|H_{n} + cB'|_{\delta} \lesssim |H_{n + 1}|$ follows from the fact that whenever $c \in C_{n + 1}$, the set $H_{n} + (c B')_{\delta} = H_{n} + (c B_{c_{1}\cdots c_{n}c})_{\delta}$ is a competitor in the definition of $H_{n + 1}$. With the choice $c = \bar{c} \in C_{n + 1}$, the inequalities \eqref{form15a}-\eqref{form16} are mutually incompatible for $\delta > 0$ small enough, depending on $\bar{\epsilon} = \bar{\epsilon}(\alpha,\beta,\gamma,\kappa) > 0$. A contradiction has been reached.

It remains to check that that the objects in \eqref{form22} satisfy the hypotheses of Theorem \ref{mainSubset1} with constants $\bar{\alpha},\bar{\beta},\kappa/2,\bar{\gamma}$, and $\bar{\epsilon}_{0}$. More precisely:
\begin{itemize}
\item[(a)] $|\bar{A}| \leq \delta^{-\bar{\alpha}}$,
\item[(b)] $|\bar{B}| \geq \delta^{-\bar{\beta}}$, and $\bar{B}$ satisfies a Frostman condition with exponent $\bar{\kappa}$, for $r \in [\delta,\delta^{\bar{\epsilon}_{0}}]$,
\item[(c)] $\bar{\nu}$ satisfies a Frostman condition with exponent $\bar{\gamma}$.
\end{itemize}
We first use the Pl\"unnecke-Ruzsa inequality to establish (a), assuming that $\delta > 0$ is sufficiently small in terms of $N,\bar{\alpha}$. It is clear by induction that $H_{n}$ can be written as a sum of $n \leq N$ sets of the form $(c_{m}B_{c_{1}\cdots c_{m}})_{\delta}$, for some $1 \leq m \leq n$. Noting that $A_{c_{1}\cdots c_{n}} \subset A_{c_{m}}$, each of these sets individually satisfies
\begin{displaymath} |A_{c_{1}\cdots c_{n}} + (c_{m}B_{c_{1}\cdots c_{m}})_{\delta}| \lesssim |A_{c_{m}} + c_{m}B_{c_{m}}|_{\delta} \stackrel{\eqref{bb5}}{\leq} \delta^{-\zeta}|A| \stackrel{\eqref{b4}}{\leq} 2\delta^{-(N + 1)\zeta}|A_{c_{1}\cdots c_{n}}|. \end{displaymath}
We may therefore infer that
\begin{displaymath} |H_{n}| \lesssim_{N} \delta^{-N(N + 1)\zeta}|A| \leq \delta^{-N(N + 1)\zeta - \alpha}. \end{displaymath}
from the Pl\"unnecke-Ruzsa inequality, Lemma \ref{PRIneq}, applied with $A_{c_{1}\cdots c_{n}}$ in place of $A$ (and finally also using $|A_{c_{1}\cdots c_{n}}| \leq |A| \leq \delta^{-\alpha}$, see above \eqref{form10aa}). This inequality implies $|H_{n}| \leq \delta^{-\bar{\alpha}}$ for small enough $\delta > 0$, recalling our choice of $\zeta$ at \eqref{bb9}.

We move to (b). Recall from \eqref{form10aa} that the set $B$ satisfies the assumptions of Theorem \ref{main} with constants $\epsilon_{0},\kappa > 0$:
\begin{displaymath} |B \cap B(x,r)| \leq r^{\kappa}|B|, \qquad x \in \R, \, \delta \leq r \leq \delta^{\epsilon_{0}} = \delta^{\bar{\epsilon}_{0}}. \end{displaymath}
Since $B_{c_{1}\cdots c_{n}} \subset B$, and $|B_{c_{1}\cdots c_{n}}| \geq \tfrac{1}{2}\delta^{N\zeta}|B|$ by \eqref{b4}, we deduce that $B_{c_{1}\cdots c_{n}}$ satisfies a Frostman condition with exponent $\bar{\kappa}$:
\begin{displaymath} |B_{c_{1}\cdots c_{n}} \cap B(x,r)| \leq 2\delta^{-N\zeta}r^{\kappa}|B_{c_{1}\cdots c_{n}}| \leq r^{\bar{\kappa}}|B_{c_{1}\cdots c_{n}}|, \qquad x \in \R, \, \delta \leq r \leq \delta^{\bar{\epsilon}_{0}}. \end{displaymath}
The final inequality uses our choice of $\zeta$ in \eqref{bb9}, and also assumes that $\delta > 0$ is sufficiently small, depending on $\bar{\epsilon}_{0},\kappa$. Moreover, since $|B| \geq \delta^{-\beta}$ by assumption, we have
\begin{displaymath} |B_{c_{1}\cdots c_{n}}| \geq \tfrac{1}{2}\delta^{N\zeta}|B| \stackrel{\eqref{bb9}}{\geq} \delta^{-\bar{\beta}}. \end{displaymath}
Let us finally check (c), namely that the probability measure $\bar{\nu} = \nu(C_{n + 1})^{-1} \cdot \nu|_{C_{n + 1}}$ satisfies a Frostman condition with exponent $\bar{\gamma}$. Indeed, recalling from \eqref{bb3} that $\nu(C_{n + 1}) \geq 2^{-n - 2}\delta^{N\zeta}$, we have
\begin{displaymath} \bar{\nu}(B(x,r)) \leq 2^{n + 2}\delta^{-N\zeta}\nu(B(x,r)) \leq 2^{N + 2}\delta^{-N\zeta} \cdot r^{\gamma}, \qquad x \in \R, \, \delta \leq r \leq \delta^{\bar{\epsilon}_{0}}. \end{displaymath}
Since $r^{\gamma} \leq \delta^{\epsilon_{0}(\gamma - \bar{\gamma})}r^{\bar{\gamma}}$ for $r \leq \delta^{\epsilon_{0}}$, by our choice of $\zeta$ in \eqref{bb9}, the right hand side is bounded from above by $r^{\bar{\gamma}}$ for all $\delta > 0$ small enough, depending on $N,\gamma,\bar{\gamma}$ (all of which only depend on $\alpha,\beta,\gamma,\kappa$). We have now verified that the objects $\bar{A},\bar{B},\bar{\nu}$ from \eqref{form9} satisfy the hypotheses of Theorem \ref{mainSubset1}. This concludes the proof of Theorem \ref{main}. \end{proof}

\begin{remark}\label{rem6} Once again, in order to deduce Theorem \ref{main} for a fixed exponent "$\kappa$" from Theorem \ref{mainSubset1}, we only needed to apply Theorem \ref{mainSubset1} with a fixed exponent $\bar{\kappa} \in (0,\kappa)$, as close to $\kappa$ as we desire. Combining this with the previous similar Remarks \ref{rem4}-\ref{rem5}, we obtain the conclusion alluded to in Remark \ref{rem7}: to deduce Theorem \ref{main} for a fixed exponent "$\kappa$" from Theorem \ref{mainTechnical}, we only needed to apply Theorem \ref{mainTechnical} for $\bar{\kappa} \in (0,\kappa)$ arbitrarily close to $\kappa$.

\end{remark}

\subsection{Proof of Corollary \ref{hausdorffCor}}\label{appA} I close the paper by recording the (standard pigeonholing) proof of Corollary \ref{hausdorffCor}, whose statement is recalled here:
\begin{cor} Let $0 < \beta \leq \alpha < 1$ and $\kappa > 0$. Then, there exists $\eta = \eta(\alpha,\beta,\kappa) > 0$ such that if $A,B \subset \R$ are Borel sets with $\Hd A = \alpha$, $\Hd B = \beta$, then 
\begin{displaymath} \Hd \{c \in \R : \Hd (A + cB) \leq \alpha + \eta\} \leq \tfrac{\alpha - \beta}{1 - \beta} + \kappa. \end{displaymath}
\end{cor}

\begin{proof} It is easy to reduce to the case where $A,B$ are compact, $A,B \subset [0,1]$, and $\mathcal{H}^{\alpha}(A) > 0$ and $\mathcal{H}^{\beta}(B) > 0$. In this case, one may use Frostman's lemma \cite[Theorem 8.8]{zbMATH01249699} to find Borel probability measures $\mu_{A},\mu_{B}$ with $\spt(\mu_{A}) \subset A$, $\spt (\mu_{B}) \subset B$, and satisfying $\mu_{A}(B(x,r)) \leq C_{A}r^{\alpha}$ and $\mu_{B}(B(x,r)) \leq C_{B}r^{\beta}$ for all balls $B(x,r) \subset \R$. If $\eta > 0$ is small enough, we will show that $\Hd E \leq (\alpha - \beta)/(1 - \beta) + \kappa$, where
\begin{displaymath} E := E_{\eta} := \{c \in [\tfrac{1}{2},1] : \Hd (A + cB) < \alpha + \eta\}. \end{displaymath} 
It is easy to show (by rescaling considerations) that this implies Corollary \ref{hausdorffCor}, where $[\tfrac{1}{2},1]$ is replaced by $\R$. It is well-known that the set $E \subset [\tfrac{1}{2},1]$ is Borel. Consequently, if the inequality fails, one may use Frostman's lemma again to find a Borel probability measure $\nu$, supported on $E$, satisfying $\nu(B(x,r)) \leq C_{\nu}r^{\gamma}$ for all $x \in \R$ and $r > 0$, where $\gamma \geq (\alpha - \beta)/(1 - \beta) + \kappa$.

For future reference, we fix some parameters $\bar{\alpha} > \alpha$, $\bar{\beta} < \beta$, and $\bar{\gamma} < \gamma$ such that the inequality
\begin{equation}\label{form111} \bar{\gamma} > (\bar{\alpha} - \bar{\beta})/(1 - \bar{\beta}) \end{equation}
still holds. We then let $\bar{\epsilon},\bar{\epsilon}_{0},\bar{\delta}_{0} > 0$ be the constants provided by Theorem \ref{main} applied with parameters $\bar{\alpha},\bar{\beta},\kappa = \bar{\beta},\bar{\gamma}$. We pick $\eta > 0$ in the definition of $E$ so small that
\begin{equation}\label{form113} \eta < \min\{\bar{\epsilon},\bar{\alpha} - \alpha\}. \end{equation}

Fix $c \in \spt(\nu) \subset E$, so $\Hd (A + cB) < \alpha + \eta$. This means that for a given fixed threshold $\delta_{0} := 2^{-j_{0}} \in 2^{-\N}$ (the requirements will depend on $\alpha,\beta,\gamma,C_{A},C_{B},C_{\nu}$), one may find a countable cover $\mathcal{I}_{c}$ of $A + cB$, consisting of disjoint dyadic intervals of length $\ell(I) \leq \delta_{0}$, such that
\begin{equation}\label{form100} \sum_{I \in \mathcal{I}_{c}} \ell(I)^{\alpha + \eta} \leq 1. \end{equation} 
Below, we will often write that something holds "for small enough $\delta > 0$": this will always mean "assuming that the upper bound $\delta_{0}$ for $\delta$ has been chosen sufficiently small, depending on the parameters $\alpha,\beta,\gamma,C_{A},C_{B},C_{\nu}$. In particular, we will take $\delta_{0} \leq \bar{\delta}_{0}$.

The "tubes" $\mathcal{T}_{c} := \{\pi_{c}^{-1}(I)\}_{I \in \mathcal{I}_{c}}$ cover $A \times B \supset \spt(\mu_{A} \times \mu_{B})$, so
\begin{displaymath} \int_{E} \sum_{T \in \mathcal{T}_{c}} (\mu_{A} \times \mu_{B})(T) \, d\nu(c) = 1. \end{displaymath}
Recall that $\delta_{0} = 2^{-j_{0}}$, and let $\mathcal{I}_{c}^{j} := \{I \in \mathcal{I}_{c} : \ell(I) = 2^{-j}\}$ for $j \geq j_{0}$. Write also $\mathcal{T}^{j}_{c} := \{\pi_{c}^{-1}(I)\}_{I \in \mathcal{I}_{c}^{j}}$. Since $\mathcal{T}_{c} = \bigcup_{j \geq j_{0}} \mathcal{T}_{c}^{j}$, there exists $j \geq j_{0}$ such that
\begin{displaymath} \int_{E} \sum_{T \in \mathcal{T}_{c}^{j}} (\mu_{A} \times \mu_{B})(T) \, d\nu(c) \gtrsim j^{-2}. \end{displaymath}
Write $\delta := 2^{-j}$ for this index $j$. According to the estimate above, there exists a subset $E_{\delta}' \subset E$ of measure $\nu(E_{\delta}') \gtrsim j^{-2} = \log_{2}(1/\delta)^{-2}$ such that for each $c \in E_{\delta}'$, the tubes $T \in \mathcal{T}_{c}^{j}$ cover a subset $G_{c} \subset \spt (\mu_{A} \times \mu_{B})$ of measure $(\mu_{A} \times \mu_{B})(G_{c}) \gtrsim \log_{2}(1/\delta)^{-2}$. In particular, we record that
\begin{equation}\label{form99} |\pi_{c}(G_{c})|_{\delta} \leq |\mathcal{T}_{c}^{j}| \leq \delta^{-\alpha - \eta}, \qquad c \in E_{\delta}', \end{equation}
by \eqref{form100}. For the remainder of this argument, we use the notation $f \lessapprox g$ to abbreviate an inequality of the form $f \leq C\log_{2}(1/\delta)^{C}g$ for some constant $C > 0$, which may depend on the Frostman constants $\alpha,\beta,\gamma,C_{A},C_{B},C_{\nu}$. In particular, $j^{-2} = \log_{2}(1/\delta)^{-2} \gtrapprox 1$.

For $x \in \R$, let $I_{\delta}(x) \in \mathcal{D}_{\delta}$ be the unique dyadic interval of length $\delta$ with $x \in I_{\delta}(x)$. We now split the set $A$ as follows:
\begin{displaymath} A = \bigcup_{\rho \in 2^{-\N}} A(\rho) := \{x \in A : \rho \leq \mu_{A}(I_{\delta}(x)) < 2\rho\}. \end{displaymath}
We define the sets $B(\rho) \subset B$ similarly. Since $\mu_{A}(I_{\delta}(x)) \leq C_{A}\delta^{\alpha}$ and $\mu_{B}(I_{\delta}(y)) \leq C_{B}\delta^{\beta}$, we see that $A(\rho) \neq \emptyset$ implies $\rho \leq C_{A}\delta^{\alpha}$, and $B(\rho) \neq \emptyset$ implies $\rho \leq C_{\beta}\delta^{\beta}$. We also note that $A(\rho)$ can be expressed as the intersection of $A$ with certain dyadic intervals $\mathcal{A}(\rho) \subset \mathcal{D}_{\delta}$. The same is true for $B(\rho)$, for certain dyadic intervals $\mathcal{B}(\rho) \subset \mathcal{D}_{\delta}$.

Let $\mu_{A}(\rho)$ be the restriction of $\mu_{A}$ to the intervals $\mathcal{A}(\rho)$, and similarly let $\mu_{B}(\rho)$ be the restriction of $\mu_{B}$ to the intervals in $\mathcal{B}(\rho)$. Then 
\begin{equation}\label{form109} \sum_{\rho_{1}} \sum_{\rho_{2}} \int_{E_{\delta}'} (\mu_{A}(\rho_{1}) \times \mu_{B}(\rho_{2}))(G_{c}) \approx 1, \end{equation}
so it follows from the pigeonhole principle that 
\begin{displaymath} \int_{E_{\delta}'} (\mu_{A}(\rho_{A}) \times \mu_{B}(\rho_{A}))(G_{c}) \approx 1 \end{displaymath}
for some fixed choices $\rho_{A} \leq C_{A}\delta^{\alpha}$ and $\rho_{B} \leq C_{B}\delta^{\beta}$ (noting that values $\rho_{1},\rho_{2} \leq \delta^{2}$ cannot contribute substantially to the sum in \eqref{form109}). In particular, there exists a further subset $E_{\delta} \subset E_{\delta}'$ with the property $(\mu_{A}(\rho_{A}) \times \mu_{B}(\rho_{B}))(G_{c}) \approx 1$ for all $c \in E_{\delta}$. We now abbreviate
\begin{displaymath} \bar{\mu}_{A} := \mu_{A}(\rho_{A}) \quad \text{and} \quad \bar{\mu}_{B} := \mu_{B}(\rho_{B}), \end{displaymath}
so $\|\bar{\mu}_{A}\| \approx 1 \approx \|\bar{\mu}_{B}\|$. The measure $\bar{\mu}_{A}$ is supported on the closure of the intervals in $\mathcal{A}(\rho_{A})$, and $\bar{\mu}_{B}$ is supported on the closure of the intervals in $\mathcal{B}(\rho_{B})$. Let 
\begin{displaymath} A_{\delta} := (\delta \cdot \Z) \cap \left(\cup \mathcal{A}(\rho_{A}) \right) \quad \text{and} \quad B_{\delta} := (\delta \cdot \Z) \cap \left(\cup \mathcal{B}(\rho_{B}) \right). \end{displaymath}
We observe that 
\begin{equation}\label{form110} \rho_{A} \cdot |A_{\delta}| \sim \|\mu_{A}\| \approx 1 \quad \Longrightarrow \quad \rho_{A} \approx |A_{\delta}|^{-1}, \end{equation}
and similarly $\rho_{B} \approx |B_{\delta}|^{-1}$. Since $\rho_{A} \leq C_{A}\delta^{\alpha}$, we record that
\begin{equation}\label{form112} |A_{\delta}| \approx \rho_{A}^{-1} \gtrapprox \delta^{-\alpha}. \end{equation}
We next claim that, somewhat conversely, $|A_{\delta}| \leq \delta^{-\bar{\alpha}}$ if $\delta > 0$ is sufficiently small. To see this, fix an arbitrary $c \in E_{\delta}$. Since $(\bar{\mu}_{A} \times \bar{\mu}_{B})(G_{c}) \approx 1$, there exists $b \in \spt (\bar{\mu}_{B})$ such that
\begin{displaymath} \bar{\mu}_{A}(G_{c}(b)) \approx 1, \quad \text{where} \quad  G_{c}(b) = \{x \in \spt(\bar{\mu}_{A}) : (x,b) \in G_{c}\}. \end{displaymath}
Now, if $\mathcal{G}_{c}(b) := \{I \in \mathcal{A}(\rho_{A}) : G_{c}(b) \cap I \neq \emptyset\}$, we see that $\bar{\mu}_{A}(I) \sim \rho_{A}$ for all $I \in \mathcal{G}_{c}(b)$, and $\bar{\mu}_{A}(\cup \mathcal{G}_{c}(b)) \geq \bar{\mu}_{A}(G_{c}(b)) \approx 1$. Moreover, we observe that $|G_{c}(b)|_{\delta} \lesssim |\pi_{c}(G_{c})|_{\delta}$, since $\pi_{c}(G_{c}) \supset G_{c}(b) + bc$. Putting these observations together,
\begin{equation}\label{form101} |A_{\delta}| \stackrel{\eqref{form110}}{\approx} \rho_{A}^{-1} \lessapprox \rho_{A}^{-1} \cdot \bar{\mu}_{A}(\cup \mathcal{G}_{c}(b)) \lesssim |G_{c}(b)|_{\delta} \lesssim |\pi_{c}(G_{c})|_{\delta} \stackrel{\eqref{form99}}{\leq} \delta^{-\alpha - \eta}. \end{equation}
Since $\alpha + \eta < \bar{\alpha}$ by \eqref{form113}, the inequality $|A_{\delta}| \leq \delta^{-\bar{\alpha}}$ holds for $\delta > 0$ sufficiently small.

Next, since $\rho_{B} \leq C_{B}\delta^{\beta}$, we record that 
\begin{equation}\label{form102} |B_{\delta}| \approx \rho_{B}^{-1} \gtrapprox \delta^{-\beta} \quad \Longrightarrow \quad |B_{\delta}| \geq \delta^{\bar{\beta}}, \end{equation}
where the implication holds if $\delta > 0$ is sufficiently small. Moreover, for $x \in \R$ and $r \geq \delta$, we note that every point $y \in B_{\delta} \cap B(x,r)$ is contained in an interval $I_{y}(\delta) \in \mathcal{B}(\rho_{B})$ with $\mu_{B}(I_{y}(\delta)) \geq \rho_{B}$. Since $I_{y}(\delta) \subset B(x,2r)$, we deduce that 
\begin{equation}\label{form103} |B_{\delta} \cap B(x,r)| \leq \rho_{B}^{-1} \cdot \mu_{B}(B(x,2r)) \leq \rho_{B}^{-1} \cdot C_{B}(2r)^{\beta} \lessapprox r^{\beta}|B_{\delta}|. \end{equation}
In particular, for the parameter $\bar{\epsilon}_{0} > 0$ fixed below \eqref{form111}, we have $|B_{\delta} \cap B(x,r)| \leq r^{\bar{\beta}}|B_{\delta}|$ for $\delta \leq r \leq \delta^{\bar{\epsilon}_{0}}$, provided that $\delta > 0$ is small enough.

Finally, the measure $\nu_{\delta} := \nu(E_{\delta})^{-1} \cdot \nu|_{E_{\delta}}$ satisfies
\begin{equation}\label{form104} \nu_{\delta}(B(x,r)) \lessapprox \nu(B(x,r)) \leq C_{\nu}r^{\gamma}, \qquad r > 0, \end{equation}
so the inequality $\nu_{\delta}(B(x,r)) \leq r^{-\bar{\gamma}}$ holds for all $r \leq \delta^{\bar{\epsilon}_{0}}$, provided that $\delta > 0$ is small enough. The estimates \eqref{form101}-\eqref{form104}, and \eqref{form111}, imply that the triple $A_{\delta},B_{\delta},\nu_{\delta}$ satisfies all the hypotheses of Theorem \ref{main} with constants $\bar{\alpha},\bar{\beta},\kappa = \bar{\beta},\bar{\gamma}$, and $\bar{\epsilon}_{0}$. Consequently, there exists $c \in E_{\delta} \subset E_{\delta}'$ (a set of full $\nu_{\delta}$ measure) such that
\begin{equation}\label{form114} |\pi_{c}(G)|_{\delta} \geq \delta^{-\bar{\epsilon}}|A_{\delta}| \stackrel{\eqref{form112}}{\gtrapprox} \delta^{-\alpha - \bar{\epsilon}} \end{equation}
for all subsets $G \subset A_{\delta} \times B_{\delta}$ of cardinality $|G| \geq \delta^{\bar{\epsilon}}|A||B|$. We argue that this contradicts \eqref{form99}. The only issue is that set $G_{c} \subset \spt(\mu_{A} \times \mu_{B})$ is not exactly a subset of $A_{\delta} \times B_{\delta}$. To fix this, recall that nevertheless $(\bar{\mu}_{A} \times \bar{\mu}_{B})(G_{c}) \approx 1$. Let 
\begin{displaymath} \mathcal{G}_{c} := \{I \times J \in \mathcal{A}(\rho_{A}) \times \mathcal{B}(\rho_{B}) : (I \times J) \cap G_{c} \neq \emptyset\}. \end{displaymath}
Then $\mathcal{G}_{c}$ is a cover of $G_{c}$, and $(\bar{\mu}_{A} \times \bar{\mu}_{B})(Q) \sim \rho_{A} \rho_{B} \approx |A_{\delta}|^{-1}|B_{\delta}|^{-1}$ for all $Q = I \times J \in \mathcal{G}_{c}$. Consequently,
\begin{displaymath} |\mathcal{G}_{c}| \gtrsim (\rho_{A}\rho_{B})^{-1} \cdot (\bar{\mu}_{A} \times \bar{\mu}_{B})(G_{c}) \approx |A_{\delta}||B_{\delta}|. \end{displaymath}
Now, let $G_{c,\delta} \subset (A_{\delta} \times B_{\delta}) \cap G_{c}(2\delta)$ be subset of cardinality $|G_{c,\delta}| \gtrapprox |A_{\delta}||B_{\delta}|$. In particular $|G_{c,\delta}| \geq \delta^{\bar{\epsilon}}|A_{\delta}||B_{\delta}|$ for $\delta > 0$ small enough. Therefore the estimate \eqref{form114} holds for $G = G_{c,\delta}$. On the other hand, since $G_{c,\delta} \subset G_{c}(2\delta)$, we have
\begin{displaymath} |\pi_{c}(G_{c,\delta})|_{\delta} \lesssim |\pi_{c}(G_{c})|_{\delta} \leq \delta^{-\alpha - \eta} \end{displaymath}
by \eqref{form99}. Since we chose $\eta < \bar{\epsilon}$ in \eqref{form113}, this estimate is not compatible with \eqref{form114}. A contradiction has been reached, and the proof of Corollary \ref{hausdorffCor} is complete. \end{proof}

\bibliographystyle{plain}
\bibliography{references}

\def\cprime{$'$}
\begin{thebibliography}{10}

\bibitem{MR3529116}
Yves Benoist and Nicolas de~Saxc\'{e}.
\newblock A spectral gap theorem in simple {L}ie groups.
\newblock {\em Invent. Math.}, 205(2):337--361, 2016.

\bibitem{Bo1}
J.~Bourgain.
\newblock On the {E}rd\"os-{V}olkmann and {K}atz-{T}ao ring conjectures.
\newblock {\em Geom. Funct. Anal.}, 13(2):334--365, 2003.

\bibitem{MR2966656}
J.~Bourgain and A.~Gamburd.
\newblock A spectral gap theorem in {${\rm SU}(d)$}.
\newblock {\em J. Eur. Math. Soc. (JEMS)}, 14(5):1455--1511, 2012.

\bibitem{MR2481734}
Jean Bourgain.
\newblock Multilinear exponential sums in prime fields under optimal entropy
  condition on the sources.
\newblock {\em Geom. Funct. Anal.}, 18(5):1477--1502, 2009.

\bibitem{Bourgain10}
Jean Bourgain.
\newblock The discretized sum-product and projection theorems.
\newblock {\em J. Anal. Math.}, 112:193--236, 2010.

\bibitem{MR2358056}
Jean Bourgain and Alex Gamburd.
\newblock On the spectral gap for finitely-generated subgroups of {$\rm
  SU(2)$}.
\newblock {\em Invent. Math.}, 171(1):83--121, 2008.

\bibitem{MR4452675}
Damian D{\k a}browski, Tuomas Orponen, and Michele Villa.
\newblock Integrability of orthogonal projections, and applications to
  {F}urstenberg sets.
\newblock {\em Adv. Math.}, 407:Paper No. 108567, 34, 2022.

\bibitem{MR820223}
P.~Erd\H{o}s and E.~Szemer\'{e}di.
\newblock On sums and products of integers.
\newblock In {\em Studies in pure mathematics}, pages 213--218. Birkh\"{a}user,
  Basel, 1983.

\bibitem{MR673510}
K.~J. Falconer.
\newblock Hausdorff dimension and the exceptional set of projections.
\newblock {\em Mathematika}, 29(1):109--115, 1982.

\bibitem{MR4447307}
Yuqiu Fu, Shengwen Gan, and Kevin Ren.
\newblock An incidence estimate and a {F}urstenberg type estimate for tubes in
  {$\Bbb R^2$}.
\newblock {\em J. Fourier Anal. Appl.}, 28(4):Paper No. 59, 28, 2022.

\bibitem{MR2344270}
M.~Z. Garaev.
\newblock An explicit sum-product estimate in {$\Bbb F_p$}.
\newblock {\em Int. Math. Res. Not. IMRN}, (11):Art. ID rnm035, 11, 2007.

\bibitem{MR2359478}
A.~A. Glibichuk and S.~V. Konyagin.
\newblock Additive properties of product sets in fields of prime order.
\newblock In {\em Additive combinatorics}, volume~43 of {\em CRM Proc. Lecture
  Notes}, pages 279--286. Amer. Math. Soc., Providence, RI, 2007.

\bibitem{MR4283564}
Larry Guth, Nets~Hawk Katz, and Joshua Zahl.
\newblock On the discretized sum-product problem.
\newblock {\em Int. Math. Res. Not. IMRN}, (13):9769--9785, 2021.

\bibitem{GSW19}
Larry Guth, Noam Solomon, and Hong Wang.
\newblock Incidence estimates for well spaced tubes.
\newblock {\em Geom. Funct. Anal.}, 29(6):1844--1863, 2019.

\bibitem{MR2484645}
Katalin Gyarmati, M\'{a}t\'{e} Matolcsi, and Imre~Z. Ruzsa.
\newblock Pl\"{u}nnecke's inequality for different summands.
\newblock In {\em Building bridges}, volume~19 of {\em Bolyai Soc. Math.
  Stud.}, pages 309--320. Springer, Berlin, 2008.

\bibitem{MR4041116}
Weikun He.
\newblock Discretized sum-product estimates in matrix algebras.
\newblock {\em J. Anal. Math.}, 139(2):637--676, 2019.

\bibitem{MR4148151}
Weikun He.
\newblock Orthogonal projections of discretized sets.
\newblock {\em J. Fractal Geom.}, 7(3):271--317, 2020.

\bibitem{MR4244524}
Weikun He and Nicolas de~Saxc\'{e}.
\newblock Sum-product for real {L}ie groups.
\newblock {\em J. Eur. Math. Soc. (JEMS)}, 23(6):2127--2151, 2021.

\bibitem{Ho}
Michael Hochman.
\newblock On self-similar sets with overlaps and inverse theorems for entropy.
\newblock {\em Ann. of Math. (2)}, 180(2):773--822, 2014.

\bibitem{Ka}
Robert Kaufman.
\newblock On {H}ausdorff dimension of projections.
\newblock {\em Mathematika}, 15:153--155, 1968.

\bibitem{MR4299175}
Jialun Li.
\newblock Discretized {S}um-product and {F}ourier decay in {$\Bbb R^n$}.
\newblock {\em J. Anal. Math.}, 143(2):763--800, 2021.

\bibitem{Mar}
J.~M. Marstrand.
\newblock Some fundamental geometrical properties of plane sets of fractional
  dimensions.
\newblock {\em Proc. London Math. Soc. (3)}, 4:257--302, 1954.

\bibitem{zbMATH01249699}
P.~{Mattila}.
\newblock {\em {Geometry of sets and measures in Euclidean spaces. Fractals and
  rectifiability. 1st paperback ed.}}
\newblock Cambridge: Cambridge University Press, 1st paperback ed. edition,
  1999.

\bibitem{MR4565644}
Ali Mohammadi and Sophie Stevens.
\newblock Attaining the exponent 5/4 for the sum-product problem in finite
  fields.
\newblock {\em Int. Math. Res. Not. IMRN}, (4):3516--3532, 2023.

\bibitem{MR3162243}
Daniel~M. Oberlin.
\newblock Some toy {F}urstenberg sets and projections of the four-corner
  {C}antor set.
\newblock {\em Proc. Amer. Math. Soc.}, 142(4):1209--1215, 2014.

\bibitem{MR3590535}
Tuomas Orponen.
\newblock On the distance sets of {A}hlfors-{D}avid regular sets.
\newblock {\em Adv. Math.}, 307:1029--1045, 2017.

\bibitem{MR4388762}
Tuomas Orponen.
\newblock On arithmetic sums of {A}hlfors-regular sets.
\newblock {\em Geom. Funct. Anal.}, 32(1):81--134, 2022.

\bibitem{2021arXiv210603338O}
Tuomas {Orponen} and Pablo {Shmerkin}.
\newblock {On the Hausdorff dimension of Furstenberg sets and orthogonal
  projections in the plane}.
\newblock {\em Duke Math. J. (to appear)}.

\bibitem{OV18}
Tuomas {Orponen} and Laura {Venieri}.
\newblock {A note on expansion in prime fields}.
\newblock {\em arXiv e-prints}, page arXiv:1801.09591, January 2018.

\bibitem{MR1749437}
Yuval Peres and Wilhelm Schlag.
\newblock Smoothness of projections, {B}ernoulli convolutions, and the
  dimension of exceptions.
\newblock {\em Duke Math. J.}, 102(2):193--251, 2000.

\bibitem{2021arXiv210807311R}
Orit~E. {Raz} and Joshua {Zahl}.
\newblock {On the dimension of exceptional parameters for nonlinear
  projections, and the discretized Elekes-R{\'o}nyai theorem}.
\newblock {\em Geom. Funct. Anal. (to appear)}.

\bibitem{MR4469270}
Misha Rudnev and Sophie Stevens.
\newblock An update on the sum-product problem.
\newblock {\em Math. Proc. Cambridge Philos. Soc.}, 173(2):411--430, 2022.

\bibitem{MR2314377}
Imre~Z. Ruzsa.
\newblock An application of graph theory to additive number theory.
\newblock {\em Sci. Ser. A Math. Sci. (N.S.)}, 3:97--109, 1989.

\bibitem{Sh}
Pablo Shmerkin.
\newblock On {F}urstenberg's intersection conjecture, self-similar measures,
  and the {$L^q$} norms of convolutions.
\newblock {\em Ann. of Math. (2)}, 189(2):319--391, 2019.

\bibitem{MR3940442}
Pablo Shmerkin.
\newblock On the {H}ausdorff dimension of pinned distance sets.
\newblock {\em Israel J. Math.}, 230(2):949--972, 2019.

\bibitem{Shmerkin20}
Pablo Shmerkin.
\newblock A nonlinear version of bourgain's projection theorem.
\newblock (J. Eur. Math. Soc. to appear), 2020.

\bibitem{2021arXiv211209044S}
Pablo {Shmerkin} and Hong {Wang}.
\newblock {On the distance sets spanned by sets of dimension $d/2$ in
  $\mathbb{R}^d$}.
\newblock {\em arXiv e-prints}, page arXiv:2112.09044, December 2021.

\bibitem{MR3742451}
Sophie Stevens and Frank de~Zeeuw.
\newblock An improved point-line incidence bound over arbitrary fields.
\newblock {\em Bull. Lond. Math. Soc.}, 49(5):842--858, 2017.

\bibitem{MR729791}
Endre Szemer\'{e}di and William~T. Trotter, Jr.
\newblock Extremal problems in discrete geometry.
\newblock {\em Combinatorica}, 3(3-4):381--392, 1983.

\bibitem{MR2289012}
Terence Tao and Van Vu.
\newblock {\em Additive combinatorics}, volume 105 of {\em Cambridge Studies in
  Advanced Mathematics}.
\newblock Cambridge University Press, Cambridge, 2006.

\end{thebibliography}

\end{document}